\DeclareMathAccent{\widertilde}{\mathord}{largesymbols}{"62}
\newcommand{\nc}{\newcommand}
\nc{\nt}{\newtheorem} 
\nc{\R}{{\bf R}}
\nc{\cl}{\mbox{\rm cl}\,} 
\nc{\cls}{ \mbox{{\scriptsize {\rm cl}}}\,} 
\nc{\conv}{\mbox{\rm conv}\,} 
\nc{\cone}{\mbox{\rm coco}\,} 
\nc{\lin}{\mbox{\rm lin}\,} 
\nc{\rb}{\mbox{\rm rb}\,}
\nc{\ri}{\mbox{\rm ri}\,}
\nc{\inter}{\mbox{\rm int}\,}
\nc{\bd}{\mbox{\rm bd}\,}
\nc{\relbd}{\mbox{\rm rlbd}\,}
\nc{\epi}{\mbox{\rm epi}\,}
\nc{\gph}{\mbox{\rm gph}\,}
\nc{\rge}{\mbox{\rm rge}\,}
\nc{\rgel}{\mbox{\rm {\scriptsize rge}}\,}
\nc{\sepi}{\mbox{\rm {\scriptsize epi}}\,}
\nc{\sdom}{\mbox{\rm {\scriptsize dom}}\,}
\nc{\sgph}{\mbox{\rm {\scriptsize gph}}\,}
\nc{\dom}{\mbox{\rm dom}\,}
\nc{\detr}{\mbox{\rm det}\,}
\nc{\para}{\mbox{\rm par}\,}
\nc{\crit}{\mbox{\rm crit}\,}
\nc{\dist}{\mbox{\rm dist}\,}
\nc{\kernal}{\mbox{\rm ker}\,}
\newcommand{\limsupp}{\operatornamewithlimits{Limsup}}
\newcommand{\lp}{\operatornamewithlimits{limsup}}
\newcommand{\lf}{\operatornamewithlimits{liminf}}
\newcommand{\argmax}{\operatornamewithlimits{argmax}}
\title{Clarke subgradients for directionally Lipschitzian stratifiable functions}
\author{D. Drusvyatskiy\thanks{%
    School of Operations Research and Information Engineering,
    Cornell University,
    Ithaca, New York, USA;
    {\tt http://people.orie.cornell.edu/dd379/}.
    Work of Dmitriy Drusvyatskiy on this paper has been partially supported by the NDSEG grant from the Department of Defense and by the BSF Travel Grant for Young Scientists.
    %{\bf Corresponding Author} 206 Rhodes Hall, Cornell University, Ithaca, NY 14853, Tel: 718-865-6367, Fax: 607-255-9129.
    }%
	\and
	A. D. Ioffe\thanks{
	Department of Mathematics, Technion-Israel Institute of Technology, Haifa, Israel 32000;	
	{\tt http://www.math.technion.ac.il/Site/people/process.php?id=672}.
	Work of A. D. Ioffe was supported in part by the US-Israel Binational Science Foundation Grant 2008261.
	}
	\and	
  A. S. Lewis\thanks{%
  School of Operations Research and Information Engineering,
  Cornell University,
  Ithaca, New York, USA;
  {\tt http://people.orie.cornell.edu/aslewis/}.
  Research supported in part by National Science Foundation Grant DMS-0806057 and by the US-Israel Binational Scientific Foundation Grant 2008261.
}}
\begin{document}
\maketitle

\begin{abstract}
Using a geometric argument, we show that under a reasonable continuity condition, the Clarke subdifferential of a semi-algebraic (or more generally stratifiable) directionally Lipschitzian function admits a simple form: the normal cone to the domain and limits of gradients generate the entire Clarke subdifferential. The characterization formula we obtain unifies various apparently disparate results that have appeared in the literature. Our techniques also yield a simplified proof that closed semialgebraic functions on $\R^n$ have a limiting subdifferential graph of uniform local dimension $n$. \end{abstract}
\normalsize

\section{Introduction.}
Variational analysis, a subject that has been vigorously developing for the past 40 years, has proven itself to be extremely effective at describing nonsmooth phenomenon. The Clarke subdifferential (or generalized gradient) and the limiting subdifferential of a function are the earliest and most widely used constructions of the subject.
A key distinction between these two notions is that, in contrast to the limiting subdifferential, the Clarke subdifferential is always convex. 
%(Potential non-convexity of the limiting construction turns out to be a necessary feature. Indeed, the limiting subdifferential is absolutely essential for robust calculus,  sensitivity analysis of variational systems, and effective second order theory. See for example \cite{VA,tilt,tilt_mord}.) 
From a computational point of view, convexity of the Clarke subdifferential is a great virtue. To illustrate, by the classical Rademacher theorem, a {\em locally Lipschitz continuous} function $f$ on an open subset $U$ of $\R^n$ is differentiable almost everywhere on $U$, in the sense of Lebesgue measure. Clarke, in \cite{origin}, showed that for such functions, the Clarke subdifferential admits the simple presentation
\begin{equation}\label{eqn:formula}
\partial_c f(\bar{x})=\conv \{\lim_{i\to\infty} \nabla f(x_i): x_i\stackrel{\Omega}{\rightarrow} \bar{x}\}, \tag{LipR}
\end{equation}
where $\bar{x}$ is any point of $U$ and $\Omega$ is any full measure subset of $U$. Such a formula holds great computational promise since gradients are often cheap to compute. For example, utilizing (\ref{eqn:formula}), Burke, Lewis, and Overton developed an effective computational scheme for approximating the Clarke subdifferential by sampling gradients \cite{BLO}, and, motivated by this idea,  developed a robust optimization algorithm \cite{alg}. 

The authors of \cite{BLO} further extended Clarke's result to the class
of {\em finite-valued}, {\em continuous} functions $f\colon U\to \R$, defined on an open subset $U$ of $\R^n$, that are {\em absolutely continuous on lines}, and are {\em directionally Lipschitzian}; the latter means that the Clarke normal cone to the epigraph of $f$ is pointed. Under these assumptions on $f$, the authors derived the representation
\begin{equation}\label{eqn:lewis}
\partial_c f(\bar{x})=\bigcap_{\delta > 0} \cl \conv\Big(\nabla f\big(\Omega\cap B_\delta(\bar{x})\big)\Big), \tag{ACLR}
\end{equation}
where $B_\delta(\bar{x})$ is an open ball of radius $\delta$ around $\bar{x}$ and $\Omega$ is any full measure subset of $U$, and they extended their computational scheme to this more general setting.
One can easily see that this formula generalizes Clarke's result, since locally Lipschitz functions are absolutely continuous on lines, and for such functions (\ref{eqn:lewis}) reduces to (\ref{eqn:formula}). Pointedness of the Clarke normal cone is a common theoretical assumption. For instance, closed convex sets with nonempty interior have this property. Some results related to (\ref{eqn:lewis}) appear in \cite{CMW}.

In optimization theory, one is often interested in {\em extended real-valued} functions (functions that are allowed to take on the value $+\infty$), so as to model constraints, for instance. Results above are not applicable in such instances. An early predecessor of (\ref{eqn:formula}) and (\ref{eqn:lewis}) does rectify this problem, at least when convexity is present. Rockafellar \cite[Theorem 25.6]{rock} showed that for any closed {\em convex} function $f\colon\R^n\to\R\cup\{+\infty\}$, whose domain $\dom f$ has a nonempty interior, the convex subdifferential has the form
\begin{equation}\label{eqn:rock}
\partial f(\bar{x})=\conv \{\lim_{i\to\infty} \nabla f(x_i): x_i\to \bar{x}\}+N_{\sdom f}(\bar{x}),\tag{CoR}
\end{equation}
where $\bar{x}$ is any point in the domain of $f$ and $N_{\sdom f}(\bar{x})$ is the normal cone to the domain of $f$ at $\bar{x}$. 

Our goal is to provide an intuitive and geometric proof of a representation formula unifying (\ref{eqn:formula}), (\ref{eqn:lewis}), and (\ref{eqn:rock}). To do so, we will impose a certain structural assumption on the functions $f$ that we consider. Namely, we will assume that the domain of $f$ can be locally ``stratified'' into a finite collection of smooth manifolds, so that $f$ is smooth on each such manifold. Many functions of practical importance in optimization and in nonsmooth analysis possess this property. All semi-algebraic functions (those functions whose graphs can be described as a union of finitely many sets, each defined by finitely many polynomial inequalities), and more generally, tame functions fall within this class \cite{tame_opt}. We will show (Theorem~\ref{thm:main}) that for a {\em directionally Lipschitzian}, {\em stratifiable} function $f\colon\R^n\to\R\cup\{+\infty\}$, that is continuous on its domain (for simplicity), the Clarke subdifferential admits the intuitive form
\begin{equation}\label{eqn:our}
\partial_c f(\bar{x})=\conv\{\lim_{i\to\infty} \nabla f(x_i):x_i \xrightarrow[]{\Omega}\bar{x}\} + \cone\{\lim_{\substack{i\to\infty\\ t_i \downarrow 0}}t_i\nabla f(x_i):x_i\xrightarrow[]{\Omega} \bar{x}\}+ N^{c}_{\sdom f}(\bar{x}),
\end{equation}
or equivalently,
$$\partial_c f(\bar{x})=\bigcap_{\delta >0} \cl\conv\Big(\nabla f\big(\Omega\cap B_\delta(\bar{x})\big)\Big)+ N^{c}_{\sdom f}(\bar{x}),$$ 
where $\Omega$ is any dense subset of $\dom f$ and $\cone$ denotes the convex conical hull. (In contrast to the aforementioned results, we do not require $\Omega$ to have full-measure).

This is significant both from theoretical and computational perspectives. Proofs of (\ref{eqn:formula}) and (\ref{eqn:lewis}) are based largely on Fubini's theorem and analysis of directional derivatives, and though the arguments are elegant, they do not shed light on the geometry driving such representations to hold. Similarly, Rockafellar's argument of (\ref{eqn:rock}) relies heavily on the well-oiled machinery of convex analysis. Consequently, a simple unified geometric argument is extremely desirable. From a practical point of view, representation (\ref{eqn:our}) decouples the behavior of the function from the geometry of the domain; consequently, when the domain is a simple set (polyhedral perhaps) and the behavior of the function on the interior of the domain is complex, our result provides a convenient method of calculating the Clarke subdifferential purely in terms of limits of gradients and the normal cone to the domain --- information that is often readily available. Furthermore, using (\ref{eqn:our}), the functions we consider in the current paper become amenable to the techniques developed in \cite{BLO}.
%The notion of a subdifferential, a set generalizing the gradient in classical analysis, is central in this field. Most functions, in practice, are differentible almost everywhere, in the sense of Lebesgue measure. Consequently, it quickly became of interest to derive representation formulas   
%
%
%
%
%The Clarke subdifferential is one of the earliest and is still one of the most effective constructions of the subject. 

%Both of the results (\ref{eqn:formula}) and (\ref{eqn:lewis}) are somewhat limited sinse the functions in question must be finite valued. On the other hand, in the realm of Convex Analysis, Rockafellar \cite[Theorem 25.6]{rock} proved that for any proper, convex, extended real-valued functions $f$ on $\R^n$, the convex subdifferential admits the elegant presentation
%\begin{equation}\label{eqn:rock}
%\partial f(\bar{x})=\conv \{\lim_{i\to\infty} \nabla f(x_i): x_i\to \bar{x}\}+N_{\sdom f}(\bar{x}),
%\end{equation}
%where $\bar{x}$ is any point in the domain of $f$ and each $x_i$ is a point of differentiability of $f$. His arguments relied heavily on the machinary of convex analysis.
 Whereas (\ref{eqn:our}) deals with pointwise estimation of the Clarke subdifferential, our second result addresses the geometry of subdifferential graphs, as a whole. In particular, we consider the size of subdifferential graphs, a feature that may have important algorithmic applications. 
 %A principle goal of variational analysis and nonsmooth optimization is to study generalized critical points of extended-real-valued functions $f$ on $\R^n$. These are the points $x$ where a generalized subdifferential, such as the limiting or Clarke, contains the zero vector. Generalized critical points of smooth functions are, in particular, critical points in the classical sense, while critical points of convex functions are simply their minimizers. More generally, one could consider the perturbed function $x\mapsto f(x)-\langle v,x\rangle$, for some fixed vector $v\in\R^n$. Then a point $x$ is critical precisely when the pair $(x,v)$ lies in the graph of the subdifferential mapping. Hence, it is natural to try to understand geometric properties of subdifferential graphs. 
%In particular, the size of the subdifferential graph has important implications. 
For instance, Robinson \cite{rob,newt} shows
computational promise for functions defined on $\R^n$ whose subdifferential
graphs are locally homeomorphic to an open subset of $\R^n$. 
Due to the results of Minty~\cite{minty} and Poliquin-Rockafellar~\cite{prox_reg}, Robinson's techniques are applicable for convex, and more generally, for ``prox-regular'' functions.
%Minty famously showed that the graph of the subdifferential of a closed, proper, convex function on $\R^n$ is an $n$-dimensional Lipschitz manifold; consequently, Robinson's techniques are applicable in this setting. More generally in \cite{prox_reg}, Poliquin and Rockafellar used
%Minty's theorem to show that an analogous result holds for ``prox-regular
%functions'', unifying the smooth and the convex cases. More generally, such a result can not hold even for Lipschitz functions \cite{Benoist,badlim,gen_lip}. 
Trying to understand the size of subdifferential graphs in the absence of convexity (or monotonicity), the authors in \cite{small} were led to consider the semi-algebraic setting. The authors proved that the {\em limiting subdifferential graph} of a closed, proper, {\em semi-algebraic} function on $\R^n$ has {\em uniform local dimension} $n$. Applications to sensitivity analysis were also discussed. We show how the techniques developed in the current paper drastically simplify the proof of this striking fact. Remarkably, this dimensional uniformity does not hold for the Clarke subdifferential graph. 

The rest of the paper is organized as follows. In Section~\ref{sec:prelim}, we establish notation and recall some basic facts from variational analysis. In Section~\ref{sec:char}, we derive a characterization formula for the Clarke subdifferential of a directionally Lipschitzian, stratifiable function that possesses a certain continuity property on its domain, and in Section~\ref{sec:random} we relate our results to the gradient sampling framework.
In Section~\ref{sec:loc_dim}, we prove the theorem concerning the local dimension of semi-algebraic subdifferential graphs. We have designed this last section to be entirely independent from the previous ones (except for Section~\ref{sec:prelim}), since it does require a short foray into semi-algebraic geometry.

\section{Preliminary results.}\label{sec:prelim}
In this section, we summarize some of the fundamental tools used in variational analysis and nonsmooth optimization.
We refer the reader to the monographs Borwein-Zhu \cite{Borwein-Zhu}, Clarke-Ledyaev-Stern-Wolenski \cite{CLSW}, Mordukhovich \cite{Mord_1,Mord_2}, and Rockafellar-Wets \cite{VA}, for more details.  Unless otherwise stated, we follow the terminology and notation of \cite{VA}.

The functions we consider will take their values in the extended real line $\overline{\R}:=\R\cup\{-\infty,\infty\}$. We say that an extended-real-valued function is {\em proper} if it is never $-\infty$ and is not always $+\infty$.
For a function $f\colon\R^n\rightarrow\overline{\R}$, the {\em domain} of $f$ is $$\mbox{\rm dom}\, f:=\{x\in\R^n: f(x)<+\infty\},$$ and the {\em epigraph} of $f$ is $$\mbox{\rm epi}\, f:= \{(x,r)\in\R^n\times\R: r\geq f(x)\}.$$

Throughout this work, we will only use Euclidean norms. Hence for a point $x\in\R^n$, the symbol $|x|$ will denote the standard Euclidean norm of $x$. Unless we state otherwise, the topology on $\R^n$ that we consider is induced by this norm.
Given a set $Q\subset\R^n$, the notation $x_i\stackrel{Q}{\rightarrow} \bar{x}$ will mean that the sequence $x_i$ converges to $\bar{x}$ and all the points $x_i$ lie in $Q$.
We let ``$o(|x-\bar{x}|)$ for $x\in Q$'' be shorthand for a function that satisfies 
$\frac{o(|x-\bar{x}|)}{|x-\bar{x}|}\rightarrow 0$ whenever $x\stackrel{Q}{\rightarrow} \bar{x}$ with $x\neq\bar{x}$. 

A function $f\colon\R^n\to\overline{\R}$ is {\em locally Lipschitz continuous at} a point $\bar{x}$ {\em relative to} a set $Q\subset\R^n$ containing $\bar{x}$, if $f(\bar{x})$ is finite and there exists a a real number $\kappa\in [0,+\infty)$ with 
$$|f(x)-f(y)|\leq \kappa |x-y|, \textrm{ for all } x,y\in Q \textrm{ near } \bar{x}.$$
If there exists an open neighborhood $Q$ of $\bar{x}$ so that the above conditions hold, then we simply say that $f$ is locally Lipschitz continuous at $\bar{x}$.

A {\em set-valued mapping} $F$ from $\R^n$ to $\R^m$, denoted by $F\colon\R^n\rightrightarrows\R^m$, is a mapping from $\R^n$ to the power set of $\R^m$. Hence for each  point $x\in\R^n$, $F(x)$ is a subset of $\R^m$. For a set-valued mapping $F\colon\R^n\rightrightarrows\R^m$, the {\em domain} of $F$ is $$\mbox{\rm dom}\, F:=\{x\in\R^n:F(x)\neq\emptyset\},$$ and the {\em graph} of $F$ is $$\mbox{\rm gph}\, F:=\{(x,y)\in\R^n\times\R^m:y\in F(x)\}.$$
The {\em outer limit} of $F$ at $\bar{x}$ is 
$$\limsupp_{x\to\bar{x}} F(x):=\{v\in\R^m: \exists x_i\to\bar{x},~\exists v_i\to v \textrm{ with } v_i\in F(x_i) \}.$$
The mapping $F$ is {\em locally bounded} near $\bar{x}$ if the image $F(V)\subset\R^m$ is bounded, for some neighborhood $V$ of $\bar{x}$.
The following definition extends in two ways the classical notion of continuity to set-valued mappings.
\begin{definition}[Continuity]
{\rm Consider a set-valued mapping $F\colon\R^n\rightrightarrows\R^m$.
\begin{enumerate}
\item $F$ is {\em outer semicontinuous} at a point $\bar{x}\in\R^n$ if for any sequence of points $x_i\in\R^n$ converging to $\bar{x}$ and any sequence of vectors $v_i\in F(x_i)$ converging to $\bar{v}$, we must have $\bar{v}\in F(\bar{x})$.
\item $F$ is {\em inner semicontinuous} at $\bar{x}$ if for any sequence of points $x_i$ converging to $\bar{x}$ and any vector $\bar{v}\in F(\bar{x})$, there exist vectors $v_i\in F(x_i)$ converging to $\bar{v}$.
\end{enumerate}
If both properties hold, then we say that $F$ is {\em continuous} at $\bar{x}$. 
}
\end{definition}

We let $\inter Q$, $\cl Q$, $\conv Q$, and $\cone Q$, denote the interior, closure, convex hull, and convex conical hull of a set $Q$, respectively. A cone $Q$ is said to be {\em pointed} if it contains no lines. An open ball of radius $r$ around a point $\bar{x}\in\R^n$ will be denoted by $B_r(\bar{x})$. We let ${\bf B}$ and $\overline{{\bf B}}$ be the open and closed unit balls, respectively.
The following is a standard result on preservation of continuity of set-valued mappings under a pointwise convex conical hull operation. We provide a proof for completeness.

\begin{lemma}[Preservation of Continuity]\label{lem:outer}
Consider a set-valued mapping $F\colon\R^n\rightrightarrows\R^m$ that is outer-semicontinuous at a point $\bar{x}\in\R^n$. Suppose $F$ is locally bounded near $\bar{x}$ and $0\notin F(\bar{x})$.
%the function $x\mapsto \max_{v\in F(x)}\max \{|v|, \frac{1}{|v|}\}$ is locally bounded near $\bar{x}$. 
Then the mapping $$x\mapsto G(x):=\cone F(x)$$ is outer-semicontinuous at $\bar{x}$, provided that $G(\bar{x})$ is pointed. 
\end{lemma}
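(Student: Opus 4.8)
The plan is to verify outer-semicontinuity of $G$ directly from the definition. Fix a sequence $x_i \to \bar{x}$ together with vectors $w_i \in G(x_i) = \cone F(x_i)$ satisfying $w_i \to w$; I must show $w \in G(\bar{x})$. The starting point is a representation of each $w_i$ as a nonnegative combination of finitely many elements of $F(x_i)$. By the conical version of Carath\'eodory's theorem, in $\R^m$ one may always use at most $m$ generators, so after padding with zero coefficients I can write $w_i = \sum_{j=1}^m \lambda_{ij} v_{ij}$ with $\lambda_{ij} \geq 0$ and $v_{ij} \in F(x_i)$, keeping the number of summands fixed independently of $i$. Local boundedness of $F$ ensures that the vectors $v_{ij}$ remain bounded along the whole sequence.

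The heart of the argument is the claim that the coefficients $\lambda_{ij}$ stay bounded, and I would establish this by contradiction. Set $\Lambda_i := \sum_{j=1}^m \lambda_{ij}$ and suppose, along a subsequence, that $\Lambda_i \to \infty$. Normalizing, write $w_i / \Lambda_i = \sum_{j=1}^m \mu_{ij} v_{ij}$ where $\mu_{ij} := \lambda_{ij}/\Lambda_i \geq 0$ and $\sum_j \mu_{ij} = 1$. Because $w_i \to w$ is bounded while $\Lambda_i \to \infty$, the left-hand side tends to $0$. Passing to a further subsequence, the bounded quantities converge: $\mu_{ij} \to \mu_j$ and $v_{ij} \to v_j$. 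Outer-semicontinuity of $F$ forces $v_j \in F(\bar{x})$, and the normalization gives $\sum_j \mu_j = 1$, so the $\mu_j$ are not all zero, while $\sum_j \mu_j v_j = 0$.

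It is exactly here that pointedness enters, and I expect this step to be the main obstacle. Pick an index $k$ with $\mu_k > 0$. The relation $\sum_j \mu_j v_j = 0$ rearranges to $v_k = -\sum_{j \neq k} (\mu_j/\mu_k) v_j$, exhibiting $-v_k$ as a nonnegative combination of elements of $F(\bar{x})$, hence $-v_k \in G(\bar{x})$; of course $v_k \in F(\bar{x}) \subseteq G(\bar{x})$ as well. Since $G(\bar{x})$ is a convex cone containing both $v_k$ and $-v_k$, it contains the entire line $\R v_k$. But $v_k \in F(\bar{x})$ together with $0 \notin F(\bar{x})$ gives $v_k \neq 0$, so this line is nontrivial, contradicting pointedness of $G(\bar{x})$. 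Thus the coefficients must be bounded.

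Finally, with $\lambda_{ij}$ and $v_{ij}$ all bounded, I pass to a convergent subsequence to obtain $\lambda_{ij} \to \lambda_j \geq 0$ and $v_{ij} \to v_j$; outer-semicontinuity of $F$ yields $v_j \in F(\bar{x})$, and passing to the limit in $w_i = \sum_j \lambda_{ij} v_{ij}$ gives $w = \sum_j \lambda_j v_j \in \cone F(\bar{x}) = G(\bar{x})$, as required. The decisive interplay of hypotheses is that local boundedness supplies the convergent subsequences, while pointedness together with $0 \notin F(\bar{x})$ is precisely what rules out a blow-up of the conic coefficients.
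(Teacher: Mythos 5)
Your proof is correct and follows essentially the same route as the paper's: a Carath\'eodory decomposition into at most $m$ generators, a contradiction argument showing the conic coefficients stay bounded (the paper normalizes by the maximum coefficient where you normalize by the sum, an immaterial difference), and a final passage to the limit using local boundedness and outer-semicontinuity of $F$. Your explicit unpacking of how $\sum_j \mu_j v_j = 0$ with $v_k \neq 0$ violates pointedness, and of where $0 \notin F(\bar{x})$ enters, merely spells out what the paper leaves terse.
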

\begin{proof}
Consider a sequence $(x_i,v_i)\to(\bar{x},\bar{v})$, with $v_i\in G(x_i)$ for each index $i$. By Carath\'{e}odory's theorem, we deduce 
$$v_i=\sum_{j=1}^{m}\lambda^i_j y^i_j,$$
for some multipliers $\lambda^i_j\geq 0$ and vectors $y^i_j\in F(x_i)$, where $j=1,\ldots, m$. 

Restricting to a subsequence, we may assume that there exist nonzero vectors $y_j\in F(\bar{x})$ satisfying  
$$y_j=\lim_{i\to\infty} y^i_j, \textrm{ for each index } j.$$ 

We claim that the sequence of multipliers $\lambda^i_j$ is bounded. Indeed suppose this is not the case and let $m_i:=\max_{j} \lambda^i_j$. Then up to a subsequence, there exist multipliers $\lambda_j$, not all zero, such that  $$\frac{\lambda^i_j}{m_i}\to\lambda_j \textrm{ as } i\to\infty, \textrm{ and }0=\sum_{j=1}^m \lambda_j y_j,$$ contradicting the fact that $G(\bar{x})$ is pointed. We conclude that the multipliers $\lambda^i_j$ are bounded. 

Then up to a subsequence, we have $$\bar{v}=\lim_{i\to\infty} \sum_{j=1}^{m}\lambda^i_j y^i_j= \sum_{j=1}^{m}\lambda_j y_j\in G(\bar{x}),$$ for some real numbers  $\lambda_j \geq 0$. We conclude that $G$ is outer-semicontinuous at $\bar{x}$.
\end{proof}

The distance of a point $x$ to a set $Q$ is  $$d_Q(x):=\inf_{y\in Q}|x-y|,$$ and the projection of $x$ onto $Q$ is $$P_Q(x):=\{y\in Q:|x-y|=d_Q(x)\}.$$

We now consider normal cones, which are fundamental objects in variational geometry.
\begin{definition}[Proximal normals]
{\rm
Consider a set $Q\subset\R^n$ and a point $\bar{x}\in Q$. The {\em proximal normal cone} to $Q$ at $\bar x$, denoted
$N^{P}_Q(\bar x)$, consists of all vectors $v \in \R^n$ such that $\bar{x}\in P_Q(\bar{x}+\frac{1}{r}v)$ for some $r>0$. %In this case, $\{\bar{x}\}=P_Q(\bar{x}+\frac{1}{r'}v)$ for each real number $r'>r$.
}
\end{definition}

Geometrically, a vector $v\neq 0$ is a proximal normal to $Q$ at $\bar{x}$ precisely when there exists a ball touching $Q$ at $\bar{x}$ such that $v$ points from $\bar{x}$ towards the center of the ball. Furthermore, this condition amounts to 
$$\langle v,x-\bar{x} \rangle \leq O(|x-\bar{x}|^2) ~~\textrm{ as } x\to\bar{x} \textrm{ in } Q.$$

Relaxing the inequality above, one obtains the following notion.

\begin{definition}[Frech\'{e}t normals]
{\rm Consider a set $Q\subset\R^n$ and a point $\bar{x}\in Q$. The {\em Frech\'{e}t normal cone} to $Q$ at $\bar x$, denoted
$\hat N_Q(\bar x)$, consists of all vectors $v \in \R^n$ such that $$\langle v,x-\bar{x} \rangle \leq o(|x-\bar{x}|) ~~\textrm{ as }x\to\bar{x} \textrm{ in } Q.$$
}
\end{definition}
Note that both $N^P_Q(\bar{x})$ and $\hat{N}_Q(\bar{x})$ are convex cones, while $\hat{N}_Q(\bar{x})$ is also closed. Evidently, the set-valued mapping $x\mapsto \hat{N}_Q(x)$ is generally not outer-semicontinuous, and hence is not robust relative to perturbations in $x$. To correct for that, the following definition is introduced.
\begin{definition}[Limiting normals]
{\rm Consider a set $Q\subset\R^n$ and a point $\bar{x}\in Q$.  The {\em limiting normal cone} to $Q$ at $\bar{x}$, denoted $N_Q(\bar{x})$, consists of all vectors $v\in\R^n$ such that there are sequences $x_i\stackrel{Q}{\rightarrow} \bar{x}$ and $v_i\rightarrow v$ with $v_i\in\hat{N}_Q(x_i)$.}
\end{definition}

The limiting normal cone, as defined above, consists of limits of Frech\'{e}t normals. In fact, the same object arises if we only take limits of proximal normals \cite[Exercise 6.18]{VA}. Convexifying the limiting normal cone leads to the following definition.
\begin{definition}[Clarke normals]
{\rm
Consider a set $Q\subset\R^n$ and a point $\bar{x}\in Q$. The {\em Clarke normal cone} to $Q$ at $\bar{x}$ is 
$$N_Q^{c}(\bar{x}):=\cl\conv N_Q(\bar{x}).$$ }
\end{definition}

Given any set $Q\subset\R^n$ and a mapping $F\colon Q\to \widetilde{Q}$, where $\widetilde{Q}\subset\R^m$, we say that $F$ is ${\bf C}^p$-{\em smooth} $(p\geq 2)$ if for each point $\bar{x}\in Q$, there is a neighborhood $U$ of $\bar{x}$ and a ${\bf C}^p$ mapping $\hat{F}\colon \R^n\to\R^m$ that agrees with $F$ on $Q\cap U$. Henceforth, to simplify notation, the word smooth will mean ${\bf C}^2$-smooth.

\begin{definition}[Smooth Manifolds]
{\rm We say that a set $M$ in $\R^n$ is a ${\bf C}^2$-{\em submanifold} of dimension $r$ if for each point $\bar{x}\in M$, there is an open neighborhood $U$ around $\bar{x}$ and a mapping $F\colon \R^n\to\R^{n-r}$ that is ${\bf C}^2$-smooth with $\nabla F(\bar{x})$ of full rank, satisfying 
$M\cap U=\{x\in U: F(x)=0\}$.
}
\end{definition}
A good reference on smooth manifold theory is \cite{Lee}.

\begin{theorem}\cite[Example 6.8]{VA}\label{thm:clarke_man}
Consider a ${\bf C}^2$-manifold $M\subset\R^n$. Then at every point $x\in M$, the normal cone $N_M(x)$ is equal to the normal space to $M$ at $x$, in the sense of differential geometry.
\end{theorem}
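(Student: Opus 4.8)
The plan is to reduce the statement to the two defining ingredients of the limiting normal cone---that it is assembled from Frech\'et normals at nearby points---and to exploit the ${\bf C}^2$ structure to pin those Frech\'et cones down exactly. Writing $N_{\bar{x}}M:=(T_{\bar{x}}M)^{\perp}$ for the normal space in the differential-geometric sense, where $T_{\bar{x}}M=\kernal\nabla F(\bar{x})$ is the tangent space coming from a local defining map $F$ as in the definition of a submanifold, I would first establish that the Frech\'et normal cone coincides with the normal space, $\hat{N}_M(\bar{x})=N_{\bar{x}}M$, at every $\bar{x}\in M$.

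For the inclusion $N_{\bar{x}}M\subseteq \hat{N}_M(\bar{x})$, I would use that $M$ is locally a graph over its tangent space: decomposing $x-\bar{x}=u+r$ with $u\in T_{\bar{x}}M$ and $r\perp T_{\bar{x}}M$ for $x\in M$ near $\bar{x}$, the ${\bf C}^2$-smoothness yields the second-order estimate $|r|=O(|u|^2)=O(|x-\bar{x}|^2)$. Hence for any $v\in N_{\bar{x}}M$ we get $\langle v,x-\bar{x}\rangle=\langle v,r\rangle=O(|x-\bar{x}|^2)$, which is precisely the proximal-normal inequality recorded just after the definition of $N^{P}_Q$; thus $v\in N^{P}_M(\bar{x})$, and relaxing to the Frech\'et inequality gives $v\in\hat{N}_M(\bar{x})$. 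For the reverse inclusion, I would test a candidate Frech\'et normal $v$ against an arbitrary tangent direction $u\in T_{\bar{x}}M$ by running a smooth curve $\gamma$ in $M$ with $\gamma(0)=\bar{x}$ and $\gamma'(0)=u$; applying the Frech\'et inequality along $\gamma(t)$ and $\gamma(-t)$ and letting $t\downarrow 0$ forces $\langle v,u\rangle=0$, so $v\in N_{\bar{x}}M$.

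The second step passes to the limiting cone. Since $\hat{N}_M(x)=N_xM$ at every $x\in M$ by the first step, the limiting normal cone $N_M(\bar{x})$ consists exactly of limits $v=\lim_i v_i$ with $v_i\in N_{x_i}M$ and $x_i\xrightarrow[]{M}\bar{x}$. I would then invoke continuity of the normal space: in local coordinates $N_xM=\rge\nabla F(x)^{*}$, and because $\nabla F$ is continuous and of full rank near $\bar{x}$, the assignment $x\mapsto N_xM$ is outer-semicontinuous, so every such limit $v$ lies in $N_{\bar{x}}M$. This yields $N_M(\bar{x})\subseteq N_{\bar{x}}M$. Combined with the trivial inclusion $\hat{N}_M(\bar{x})\subseteq N_M(\bar{x})$ (take the constant sequence $x_i=\bar{x}$) and the first step, all three cones coincide, proving the claim.

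The main obstacle is the second-order estimate $|r|=O(|u|^2)$ underlying the first inclusion: this is where ${\bf C}^2$-smoothness is genuinely used, and justifying it cleanly requires parametrizing $M$ locally as the graph of a ${\bf C}^2$ map over $T_{\bar{x}}M$ (via the implicit function theorem applied to $F$) and Taylor-expanding to second order. Everything else---the curve argument and the outer-semicontinuity of $x\mapsto N_xM$---is routine once this local graphical description is in hand.
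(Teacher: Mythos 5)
Your argument is correct, but there is nothing internal to compare it against: the paper does not prove this statement, it simply imports it from Rockafellar--Wets as \cite[Example 6.8]{VA}. The proof behind that citation is calculus-based: the full-rank hypothesis on $\nabla F(\bar{x})$ serves as a constraint qualification, and one specializes the change-of-coordinates/constraint-system results for normal cones to conclude that $M=F^{-1}(0)$ is (Clarke) regular with $N_M(\bar{x})=\rge \nabla F(\bar{x})^{*}$. Your route is more elementary and self-contained: you compute $\hat{N}_M(\bar{x})$ exactly --- the graphical localization of $M$ over $T_{\bar{x}}M$ plus the second-order Taylor estimate $|r|=O(|u|^2)$ gives the inclusion of the normal space into the proximal (hence Fr\'{e}chet) cone, and the curve argument gives the reverse --- and you then dispose of the limiting step by the continuity/uniform-injectivity argument showing $x\mapsto \rge\nabla F(x)^{*}$ is outer-semicontinuous (boundedness of the preimages $w_i$ with $v_i=\nabla F(x_i)^{*}w_i$ is exactly where full rank enters). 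Two remarks on what each approach buys. The citation keeps the paper short and leans on standard machinery; your proof actually establishes more than the statement claims, namely $N^{P}_M(\bar{x})=\hat{N}_M(\bar{x})=N_M(\bar{x})=(T_{\bar{x}}M)^{\perp}$, i.e.\ Clarke regularity of $M$, and it does so with precisely the toolkit this paper uses elsewhere (the proximal-normal inequality quoted after the definition of $N^{P}_Q$, prox-normal neighborhoods, second-order estimates), so it would integrate naturally. The one step you must make airtight is the estimate $|r|=O(|u|^2)$, which you correctly identify: it requires writing $M$ locally as the graph of a ${\bf C}^2$ map $g$ over $T_{\bar{x}}M$ with $g(0)=0$ and $\nabla g(0)=0$, obtained from the implicit function theorem applied to $F$; with that in place the rest is routine, as you say.
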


In fact, the following stronger characterization holds.
\begin{theorem}[Prox-normal neighborhood]\label{thm:prox}
Consider a ${\bf C}^2$-manifold $M\subset\R^n$ and a point $\bar{x}\in M$. Then there exists an open neighborhood $U$ of $\bar{x}$, such that
\begin{enumerate}
\item the projection map $P_M$ is single-valued on $U$,
\item for any two points $x\in M\cap U$ and $v\in U$, the equivalence, $$v\in x+N_M(x) \Leftrightarrow x=P_M(v),$$ holds.
\end{enumerate}
\end{theorem}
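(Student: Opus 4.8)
The plan is to realize the claimed neighborhood as the image of a genuine diffeomorphism defined on (a piece of) the normal bundle, and then to read off both assertions from the injectivity of that map. First I would pass to a neighborhood $U'$ of $\bar x$ on which $M$ is cut out by a single ${\bf C}^2$ map $F$ with $\nabla F$ of full rank; by Theorem~\ref{thm:clarke_man} the limiting normal cone $N_M(x)$ then coincides at each $x\in M\cap U'$ with the normal space $\rge \nabla F(x)^{\top}$, and these spaces vary in a ${\bf C}^1$ fashion. Consequently the local normal bundle $N:=\{(x,w): x\in M\cap U',\ w\in N_M(x)\}$ is a ${\bf C}^1$-manifold of dimension $n$, and the \emph{normal exponential map} $\Phi(x,w):=x+w$ is ${\bf C}^1$ on it. The derivative $D\Phi(\bar x,0)$ sends $(u,z)\in T_{\bar x}M\oplus N_{\bar x}M$ to $u+z$; since $\R^n=T_{\bar x}M\oplus N_{\bar x}M$, this is a linear isomorphism. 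The inverse function theorem then yields open sets $W\ni(\bar x,0)$ in $N$ and $U_1\ni\bar x$ in $\R^n$ with $\Phi|_W\colon W\to U_1$ a ${\bf C}^1$-diffeomorphism. \emph{In particular, every $v\in U_1$ admits exactly one representation $v=x+w$ with $(x,w)\in W$, $x\in M$, and $w\in N_M(x)$.} This is the technical heart of the argument, and it is precisely here that the ${\bf C}^2$ (rather than merely ${\bf C}^1$) hypothesis is used: it is what makes $N$ a ${\bf C}^1$-object and $\Phi$ continuously differentiable, so that the inverse function theorem applies.

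Next I would locate the nearest points. Shrinking $U'$, we may assume $M\cap U'=F^{-1}(0)$ is relatively closed in $U'$, so that $M\cap\overline{B_{2\rho}(\bar x)}$ is compact for all small $\rho>0$. Fix such a $\rho$ with $\overline{B_{2\rho}(\bar x)}\subset U'$ and with the set $\{(x,w)\in N: |x-\bar x|<2\rho,\ |w|<\rho\}$ contained in $W$ (possible since $W$ is a neighborhood of $(\bar x,0)$), and set $U:=B_\rho(\bar x)\subset U_1$. For $v\in U$ we have $d_M(v)\le|v-\bar x|<\rho$, so any point $y\in M$ with $|y-\bar x|\ge2\rho$ satisfies $|y-v|\ge\rho>d_M(v)$ and hence cannot be nearest; the distance from $v$ to $M$ is therefore attained on the compact set $M\cap\overline{B_{2\rho}(\bar x)}$, and every $y^\ast\in P_M(v)$ obeys $|y^\ast-\bar x|<2\rho$ and $|v-y^\ast|=d_M(v)<\rho$. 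For such a nearest point the vector $w^\ast:=v-y^\ast$ is a proximal normal (take $r=1$ in the definition, since $y^\ast\in P_M(y^\ast+w^\ast)$), so $w^\ast\in N^{P}_M(y^\ast)\subset N_M(y^\ast)$; by the choice of $\rho$ the pair $(y^\ast,w^\ast)$ lies in $W$ and satisfies $\Phi(y^\ast,w^\ast)=v$.

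Finally I would assemble the two pieces. Since $\Phi|_W$ is injective and $v\in U_1$, the representation $(y^\ast,w^\ast)\in W$ produced by \emph{any} nearest point is the unique one; hence all nearest points share the same first coordinate, and $P_M(v)$ is a singleton, proving the first assertion. For the equivalence, fix $x\in M\cap U$ and $v\in U$. If $x=P_M(v)$, then $v-x$ is a proximal normal and lies in $N_M(x)$, so $v\in x+N_M(x)$. Conversely, if $v\in x+N_M(x)$, write $v=x+w$ with $w\in N_M(x)$; then $|w|=|v-x|<2\rho$ and $|x-\bar x|<\rho$, so $(x,w)\in W$ and $\Phi(x,w)=v$. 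As the nearest-point pair $\big(P_M(v),\,v-P_M(v)\big)$ also lies in $W$ and maps to $v$, injectivity of $\Phi|_W$ forces $x=P_M(v)$. I expect the only delicate point to be the simultaneous calibration of $\rho$ so that nearest points cannot escape to a distant sheet of $M$ while the attendant normal vectors remain inside the domain $W$ of the diffeomorphism; the local closedness of $M$ and the distance comparison above are exactly what control this.
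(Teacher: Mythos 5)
Your proof is essentially correct, but note that the paper itself offers no argument for this theorem: it is stated as a known fact, with pointers to \cite[Exercise 13.38]{VA} and \cite[Proposition 1.9]{CLSW}, so there is no internal proof to compare against. What you have written is the standard tubular-neighborhood argument underlying those references: the map $\Phi(x,w)=x+w$ on the local normal bundle is a ${\bf C}^1$ local diffeomorphism by the inverse function theorem (and you correctly identify this as the point where ${\bf C}^2$-smoothness of $M$ is used, since it is what makes the normal bundle a ${\bf C}^1$ object), a compactness and distance-comparison argument confines nearest points to the chart, and both assertions of the theorem then drop out of injectivity of $\Phi$ restricted to $W$. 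This is a sound and self-contained route to the result.

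One calibration slip needs repair. You required $\{(x,w)\in N: |x-\bar x|<2\rho,\ |w|<\rho\}\subset W$, and this indeed suffices for the nearest-point pairs $(y^\ast, v-y^\ast)$, which satisfy $|v-y^\ast|=d_M(v)<\rho$. But in the converse direction of assertion 2 you take $x\in M\cap U$, $v\in U$, and $w=v-x$, for which you can only guarantee $|w|\le|v-\bar x|+|\bar x-x|<2\rho$, not $|w|<\rho$; so the pair $(x,w)$ need not lie in your box, and the appeal to injectivity of $\Phi|_W$ breaks as written (your own text even records the weaker bound $|w|<2\rho$ before asserting $(x,w)\in W$). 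The fix is immediate: calibrate $\rho$ so that the larger set $\{(x,w)\in N: |x-\bar x|<2\rho,\ |w|<2\rho\}$ is contained in $W$, which is possible for exactly the reason you gave, since $W$ is a neighborhood of $(\bar x,0)$ in $N$. Every subsequent appeal to membership in $W$ --- for the nearest-point pairs and for the pair $(x,v-x)$ --- then goes through unchanged, and the argument is complete.
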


Following the notation of \cite{Hare}, we call the set $U$ that is guaranteed to exist by Theorem~\ref{thm:prox},  a {\em prox-normal neighborhood} of $M$ at $\bar{x}$. For more details about Theorem~\ref{thm:prox}, see \cite[Exercise 13.38]{VA},  \cite[Proposition 1.9]{CLSW}. We should note that the theorem above holds for all ``prox-regular'' sets $M$ \cite{prox_reg}.

Armed with the aforementioned facts from variational geometry, we can study variational properties of functions via their subdifferential mappings.
\begin{definition}[Subdifferentials]
{\rm Consider a function $f\colon\R^n\rightarrow\overline{\R}$ and a point $\bar{x}\in\R^n$, with $f(\bar{x})$ finite. The {\em limiting subdifferential} of $f$ at $\bar{x}$ is defined by 
$$\partial f(\bar{x})= \{v\in\R^n: (v,-1)\in N_{\mbox{{\scriptsize {\rm epi}}}\, f}(\bar{x},f(\bar{x}))\}.$$
{\em Proximal}, {\em Frech\'{e}t}, and {\em Clarke subdifferentials} are defined analogously.
%$$\hat{\partial}f(\bar{x})= \{v\in\R^n: (v,-1)\in \hat{N}_{\mbox{{\scriptsize {\rm epi}}}\, f}(\bar{x},f(\bar{x}))\},$$
%$$\partial f(\bar{x})= \{v\in\R^n: (v,-1)\in N_{\mbox{{\scriptsize {\rm epi}}}\, f}(\bar{x},f(\bar{x}))\}.$$
%$$\partial_c f(\bar{x})= \{v\in\R^n: (v,-1)\in N^c_{\mbox{{\scriptsize {\rm epi}}}\, f}(\bar{x},f(\bar{x}))\}.$$
}
\end{definition}

\noindent For $\bar{x}$ such that $f(\bar{x})$ is not finite, we follow the convention that $\partial_{P} f(\bar{x})=\hat{\partial}f(\bar{x})=\partial f(\bar{x})=\partial_c f(\bar{x})=\emptyset$.

The subdifferentials defined above fail to capture the horizontal normals to the epigraph. Hence to obtain a more complete picture, we consider the following.
\begin{definition}[Horizon subdifferential]
{\rm For a function $f\colon\R^n\to\overline{\R}$ that is finite at a point $\bar{x}$, the {\em horizon subdifferential} is given by
$$\partial^{\infty} f(\bar{x})= \{v\in\R^n: (v,0)\in N_{\mbox{{\scriptsize {\rm epi}}}\, f}(\bar{x},f(\bar{x}))\}.$$}
\end{definition}

For a set $Q\subset\R^n$, we define $\delta_Q\colon\R^n\to\overline{\R}$ to be a function that is $0$ on $Q$ and $+\infty$ elsewhere. We call $\delta_Q$ the {\em indicator function} of $Q$.
Then for a point $\bar{x}$, we have $N_Q(\bar{x})=\partial \delta_Q(\bar{x})$, with analogous statements holding for the other subdifferentials.

Often, we will work with discontinuous functions $f\colon\R^n\to\overline{\R}$. For such functions, it is useful to consider $f${\em-attentive} convergence of a sequence $x_i$ to a point $\bar{x}$, denoted $x_i \xrightarrow[f]{} \bar{x}$. In this notation we have 
$$x_i \xrightarrow[f]{} \bar{x} \quad\Longleftrightarrow\quad x_i\to\bar{x} \textrm{ and } f(x_i)\to f(\bar{x}).$$ If in addition we have a set $Q\subset\R^n$, then $x_i \xrightarrow[f]{Q} \bar{x}$ will mean that $x_i$ converges $f$-attentively to $\bar{x}$ and the points $x_i$ all lie in $Q$. 
%We will always make it clear when we consider $f$-attentive convergence.
It is immediate that the mappings $\partial f$ and $\partial^{\infty} f$ are outer-semicontinuous with respect to $f$-attentive convergence $x \xrightarrow[f]{} \bar{x}$. 
%We should also note that Lemma~\ref{lem:outer} holds equally well under $f$-attentive convergence $x \xrightarrow[f]{} \bar{x}$ in the domain space.

%The nonsmooth subdifferentials, which we have just defined, generalize the notion of a gradient.
%\begin{prop}[{\cite[Exercise 8.8, Corollary 10.9]{VA}}]\label{thm:sum}
%Consider the function $h=f+g$, where $f\colon\R^n\to\overline{\R}$ is finite at $\bar{x}$ and $g\colon\R^n\to\overline{\R}$ is smooth on a neighborhood of $\bar{x}$. Then we have $$\hat{\partial}h(\bar{x})= \hat{\partial}f(\bar{x})+\nabla g(\bar{x}),~~ \partial h(\bar{x})=\partial f(\bar{x})+\nabla g(\bar{x}).$$ If $g$ is ${\bf C}^2$ smooth on a neighborhood of $\bar{x}$, then $$\partial_P h(\bar{x})= \partial_P f(\bar{x})+\nabla g(\bar{x}).$$ \end{prop}
%
%For convex functions, the nonsmooth subdifferentials coincide with the classical convex subdifferential.
%\begin{prop}
%Consider a convex function $f\colon\R^n\to\overline{\R}$ and a point $\bar{x}$ where $f$ is finite. Then the proximal, Frech\'{e}t, and the limiting subdifferentials coincide and are equal to
%$$\partial f(\bar{x})=\{v\in\R^n: f(x)\geq f(\bar{x})+\langle v,x-\bar{x}\rangle ~\forall x\in R^n\}.$$
%\end{prop}

%\begin{defn}[Directionally Lipschitzian functions]
%{\em
%A function $f\colon\R^n\to\overline{\R}$, that is finite at $\bar{x}$, is {\em directionally Lipschitzian} at $\bar{x}$ if $f$ is locally lower-semicontinuous at $\bar{x}$ and the cone $\partial^{\infty} f(\bar{x})$ is pointed.
%}
%\end{defn}

Consider a function $f\colon\R^n\to\overline{\R}$ that is locally lower semi-continuous at a point $\bar{x}$, with $f(\bar{x})$ finite. Then $f$ is locally Lipschitz continuous around $\bar{x}$ if and only if the horizon subdifferential is trivial, that is  the condition $\partial^{\infty} f(\bar{x})=\{0\}$ holds \cite[Theorem 9.13]{VA}. Weakening the latter condition to requiring $\partial^{\infty} f(\bar{x})$ to simply be pointed, we arrive at the following central notion \cite[Exercise 9.42]{VA}.

\begin{definition}[epi-Lipschitzian sets and directionally Lipschitzian functions]
\hspace{1 mm}
{\rm
\begin{enumerate}
\item A set $Q\subset\R^n$ is {\em epi-Lipschitzian} at one of its points $\bar{x}$ if $Q$ is locally closed at $\bar{x}$ and the normal cone $N_Q(\bar{x})$ is pointed.  
\item A function $f\colon\R^n\to\overline{\R}$, that is finite at $\bar{x}$, is {\em directionally Lipschitzian} at $\bar{x}$ if $f$ is locally lower-semicontinuous at $\bar{x}$ and the cone $\partial^{\infty} f(\bar{x})$ is pointed.
\end{enumerate}}
\end{definition}	

Rockafellar \cite[Section 4]{Clarke_roc} proved that an epi-Lipschitzian set in $\R^n$, up to a rotation, locally coincides with an epigraph of a Lipschitz continuous function defined on $\R^{n-1}$. We should further note that the Clarke normal cone mapping of an epi-Lipschitzian set is outer-semicontinuous \cite[Proposition 6.8]{CLSW}.

It is easy to see that a function $f\colon\R^n\to\overline{\R}$ is directionally Lipschitzian at $\bar{x}$ if and only if the epigraph $\epi f$ is epi-Lipschitzian at $(\bar{x},f(\bar{x}))$. 
Furthermore, for a set $Q$ that is locally closed at $\bar{x}$, the limiting normal cone $N_Q(\bar{x})$ is pointed if and only if the Clarke normal cone $N^c_Q(\bar{x})$ is pointed \cite[Exercise 9.42]{VA}.

Consider the two functions 
\begin{displaymath}
   f_1(x)=x ~~\textrm{ and }~~ f_2(x)= \left\{
     \begin{array}{ll}
       x & \textrm{if } x \leq 0\\
       x+1 & \textrm{if } x > 0
     \end{array}
   \right.
\end{displaymath}
defined on the real line. Clearly both $f_1$ and $f_2$ are directionally Lipschitzian, and have the same derivatives at each point of differentiability. However $\partial_c f_1(0)\neq \partial_c f_2(0)$. Roughly speaking, this situation arises because some normal cones to the epigraph of a function $f$, namely at points $(x,r)$ with $r>f(x)$, may not correspond to any subdifferential. Consequently, if we have any hope of deriving a characterization of the Clarke subdifferential purely in terms of gradients and the normal cone to the domain, we must eliminate the situation above. Evidently, assumption of continuity of the function on the domain would do the trick. However, such an assumption would immediately eliminate many interesting convex functions from consideration. Rather than doing so, we identify a new condition, which arises naturally as a byproduct of our arguments. At the risk of sounding extravagant, we give this property a name.
\begin{definition}[Vertical continuity]
{\rm
We say that a function $f\colon\R^n\to\overline{\R}$ is {\em vertically continuous} at a point $\bar{x}\in\dom f$ if the equation
\begin{equation}
\limsupp_{\substack{ x_i\to\bar{x},~r\to f(\bar{x}) \\ r>f(\bar{x}) }   } N_{\sepi f}(x,r)= N_{\sdom f}(\bar{x})\times\{0\}, \label{eqn:strange}
\end{equation}
holds.
%\begin{equation} 
%N_{\sepi f}(\bar{x},r)= N_{\sdom f}(\bar{x})\times \{0\} \label{eqn:strange}
%\end{equation}
%holds for each real number $r>f(\bar{x})$.
}
\end{definition}

To put this condition in perspective, we record the following observations.
\begin{proposition}[Properties of vertically continuous functions]\label{prop:norm}
Consider a proper function $f\colon\R^n\to\overline{\R}$ that is locally lower-semicontinuous at a point $\bar{x}$, with $f(\bar{x})$ finite. 
\begin{enumerate}
\item \label{it:0} Suppose that whenever a pair $(x,r)\in\epi f$, with $r > f(x)$, is near $(\bar{x},f(\bar{x}))$ we have
$$N_{\sepi f}(x,r)= N_{\sdom f}(x)\times\{0\}.$$ Then $f$ is vertically continuous at $\bar{x}$.
\item \label{it:1} Suppose that $\bar{x}$ lies in the interior of $\dom f$ and that $f$ is vertically continuous at $\bar{x}$. Then $f$ is continuous at $\bar{x}$, in the usual sense. 
\item \label{it:2} Suppose that $f$ is continuous on a neighborhood of $\bar{x}$, relative to the domain of $f$. Then $f$ is vertically continuous at all points of $\dom f$ near $\bar{x}$.
\item \label{it:3} If $f$ is convex, then $f$ is vertically continuous at every point $\bar{x}$ in $\dom f$.
\item \label{it:4} Suppose that $f$ is ``amenable'' at $\bar{x}$ in the sense of \cite{amen}; that is, $f$ is finite at $\bar{x}$ and there exists a neighborhood $V$ of $\bar{x}$ so that $f$ can be written as a composition $f=g\circ F$, for a ${\bf C}^1$ mapping $F\colon V\to\R^m$ and a proper, lower-semicontinuous, convex function $g\colon\R^m\to\overline{\R}$, so that the qualification condition 
\begin{equation}\label{eqn:qual}
N_{\sdom g}(F(\bar{x}))\cap \kernal \nabla F(\bar{x})^{*}=\{0\},
\end{equation}
is satisfied. Then $f$ is vertically continuous at $\bar{x}$.
\end{enumerate}
\end{proposition}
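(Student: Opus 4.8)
The plan is to make statement~\ref{it:0} the engine of the argument and to reduce statements \ref{it:2}, \ref{it:3}, \ref{it:4} to it, handling statement~\ref{it:1} by a separate geometric argument. The unifying observation is that each of the hypotheses in \ref{it:2}, \ref{it:3}, \ref{it:4} forces the pointwise identity
$$N_{\sepi f}(x,r)=N_{\sdom f}(x)\times\{0\}\qquad\text{whenever }(x,r)\in\epi f\text{ lies strictly above the graph, i.e.\ }r>f(x),$$
for $(x,r)$ near $(\bar x,f(\bar x))$ --- which is precisely the hypothesis of \ref{it:0}. To prove \ref{it:0} itself I would verify the two inclusions in \eqref{eqn:strange} directly, along epigraph points lying strictly above the graph. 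For ``$\supseteq$'', the vertical points $(\bar x,f(\bar x)+\eta)$ with $\eta\downarrow 0$ lie strictly above the graph and converge to $(\bar x,f(\bar x))$, and the hypothesis gives $N_{\sepi f}(\bar x,f(\bar x)+\eta)=N_{\sdom f}(\bar x)\times\{0\}$; hence every $(w,0)$ with $w\in N_{\sdom f}(\bar x)$ already sits in the outer limit. For ``$\subseteq$'', any element of the outer limit is a limit of normals $(w_i,s_i)\in N_{\sepi f}(x_i,r_i)$ along epigraph points $(x_i,r_i)\to(\bar x,f(\bar x))$ with $r_i>f(x_i)$; the hypothesis forces $s_i=0$ and $w_i\in N_{\sdom f}(x_i)$, and since $x_i\stackrel{\sdom f}{\rightarrow}\bar x$ (note $\dom f$ is locally closed by lower semicontinuity) while $N_{\sdom f}$ is outer semicontinuous, the limit lands in $N_{\sdom f}(\bar x)\times\{0\}$.

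For \ref{it:1}, since $\bar x\in\inter\dom f$ we have $N_{\sdom f}(\bar x)=\{0\}$, so vertical continuity asserts that the outer limit of epigraph normals along points strictly above the graph equals $\{(0,0)\}$. As $f$ is lower semicontinuous, continuity at $\bar x$ follows once upper semicontinuity is shown. Arguing by contradiction, suppose $f(x_k)\geq f(\bar x)+\varepsilon_0$ for some $\varepsilon_0>0$ and some $x_k\to\bar x$. For each height $\rho\in(f(\bar x),f(\bar x)+\varepsilon_0)$ the point $(\bar x,\rho)$ lies in $\epi f$ strictly above the graph, while the nearby points $(x_k,\rho)$ miss $\epi f$; hence $(\bar x,\rho)\in\bd\,\epi f$, a ``vertical wall''. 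Projecting horizontally offset exterior points onto the (locally closed) set $\epi f$ produces points of $\epi f$ strictly above the graph, at heights near $\rho$, carrying nonzero proximal normals; and because the vertical line through a point strictly above the graph is locally two-sided in $\epi f$, each such normal is horizontal, of the form $(w,0)$ with $w\neq 0$. Normalizing and letting $\rho\downarrow f(\bar x)$, a convergent subsequence yields a unit vector $(w,0)$ in the outer limit, contradicting $N_{\sdom f}(\bar x)\times\{0\}=\{(0,0)\}$. Thus $f$ is continuous at $\bar x$.

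Statements \ref{it:2} and \ref{it:3} reduce to \ref{it:0} by establishing the pointwise identity directly. For \ref{it:3} (convex $f$): at $(x,r)$ with $r>f(x)$, the two-sided vertical freedom forces the vertical component of any normal to vanish, and convexity then identifies the horizontal part with $N_{\sdom f}(x)$, so $N_{\sepi f}(x,r)=N_{\sdom f}(x)\times\{0\}$ at every such point and \ref{it:0} applies. For \ref{it:2} (continuity of $f$ relative to $\dom f$ near $\bar x$): at $(x_0,r_0)$ with $r_0>f(x_0)$, relative continuity makes $\epi f$ coincide locally with $(\dom f)\times\R$ --- for $x\in\dom f$ near $x_0$ we have $f(x)<r$ for all $r$ near $r_0$, while $x\notin\dom f$ contributes nothing --- so the product rule for limiting normal cones gives $N_{\sepi f}(x_0,r_0)=N_{\sdom f}(x_0)\times N_\R(r_0)=N_{\sdom f}(x_0)\times\{0\}$, and \ref{it:0} applies at every $x_0\in\dom f$ near $\bar x$.

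Statement \ref{it:4} is the most technical. Writing $H(x,r)=(F(x),r)$, the amenable representation $f=g\circ F$ gives $\epi f=H^{-1}(\epi g)$ locally, with $H$ of class ${\bf C}^1$. For $(x,r)$ strictly above the graph of $f$, the point $(F(x),r)$ lies strictly above the graph of the convex function $g$, so by the computation in \ref{it:3} we have $N_{\sepi g}(F(x),r)=N_{\sdom g}(F(x))\times\{0\}$. I would then apply the standard chain rule for limiting normal cones of preimages under ${\bf C}^1$ maps \cite{VA} to obtain
$$N_{\sepi f}(x,r)=\nabla H(x,r)^{*}\big(N_{\sdom g}(F(x))\times\{0\}\big)=\big(\nabla F(x)^{*}N_{\sdom g}(F(x))\big)\times\{0\}=N_{\sdom f}(x)\times\{0\},$$
the last equality being the same chain rule applied to $\dom f=F^{-1}(\dom g)$; statement \ref{it:0} then yields vertical continuity. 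The main obstacle, on which the validity of this chain rule rests, is the qualification condition: one must check that \eqref{eqn:qual} not only holds at $\bar x$ but persists at nearby points and lifts to the epigraph level, i.e.\ that $\big(N_{\sdom g}(F(x))\times\{0\}\big)\cap\kernal\nabla H(x,r)^{*}=\{0\}$. This reduces cleanly to \eqref{eqn:qual} at $x$ because $\kernal\nabla H(x,r)^{*}=\kernal\nabla F(x)^{*}\times\{0\}$, and \eqref{eqn:qual} is stable under perturbation of $x$ by outer semicontinuity of $N_{\sdom g}$ together with continuity of $\nabla F$; carrying out this stability argument carefully is the step I expect to require the most attention.
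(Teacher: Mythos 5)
Your architecture is the paper's own: claim \ref{it:0} is proved from outer semicontinuity of $N_{\sdom f}$, claims \ref{it:2}, \ref{it:3}, \ref{it:4} are reduced to \ref{it:0} by verifying the pointwise identity $N_{\sepi f}(x,r)=N_{\sdom f}(x)\times\{0\}$ at points strictly above the graph, and claim \ref{it:1} rests on the boundary/interior dichotomy for the epigraph points $(\bar x,\rho)$ with $\rho>f(\bar x)$. However, two steps in your execution are genuinely flawed.

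In claim \ref{it:1}, the assertion that projecting the exterior points $(x_k,\rho)$ onto $\epi f$ ``produces points of $\epi f$ strictly above the graph'' is unjustified, and it is false under the hypotheses available at that stage (lsc, non-usc at an interior point). Take $f(0)=0$ and $f(y)=\tfrac{1}{2}+\tfrac{1}{2}\sin(1/y)$ for $y\neq 0$, with $x_k\to 0$ chosen so that $f(x_k)=1$. Then $\dist\big((x_k,\rho),\epi f\big)=O(x_k^2)$ because $f$ dips below $\rho$ within one oscillation period, whereas the vertical ray $\{0\}\times[0,\infty)$ is at distance $|x_k|$; since $f$ is continuous away from $0$, every boundary point of $\epi f$ off this ray is a graph point, and projections of exterior points always land on the boundary. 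Hence \emph{every} projection of $(x_k,\rho)$ is a graph point, and your deduction that one obtains horizontal normals at points strictly above the graph collapses. (This $f$ is not vertically continuous, so the proposition itself is safe; the point is that your intermediate claim does not follow from non-upper-semicontinuity alone.) The repair is short: you never need the projection points to lie above the graph. Normalize the proximal normals and let $k\to\infty$ to obtain a \emph{unit limiting normal at $(\bar x,\rho)$ itself} --- a point which is strictly above the graph --- and then let $\rho\downarrow f(\bar x)$ to contradict vertical continuity. This is exactly the content of \cite[Exercise 6.19]{VA}, which the paper simply cites: a point of a locally closed set whose limiting normal cone is trivial must be an interior point.

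In claim \ref{it:4}, the final equality $\nabla F(x)^{*}N_{\sdom g}(F(x))=N_{\sdom f}(x)$ cannot be obtained by ``the same chain rule applied to $\dom f=F^{-1}(\dom g)$'': the preimage chain rule \cite[Theorem 6.14]{VA} requires the set being pulled back to be locally closed, and $\dom g$ need not be --- the paper's own example $g(x,y)=y^2/2x$ has $\dom g=\{x>0\}\cup\{(0,0)\}$, which is not locally closed at the origin. (The same misconception appears in your proof of \ref{it:0}, where you assert that $\dom f$ is locally closed ``by lower semicontinuity''; there it is harmless, since outer semicontinuity of the limiting normal cone map holds for arbitrary sets by a diagonal argument.) The epigraph-level application of the chain rule is fine because $\epi g$ is closed; for the domain-level identity the paper instead quotes \cite[Theorem 3.3]{amen}, and so should you. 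Your remaining worry --- persistence of the qualification (\ref{eqn:qual}) at nearby points --- is a real requirement, and the compactness-plus-outer-semicontinuity argument you sketch does work, precisely because $N_{\sdom g}$ coincides with the normal cone to the closed convex set $\cl\dom g$, so outer semicontinuity is unaffected by the possible non-closedness of $\dom g$.
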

\begin{proof}
Claim \ref{it:0} is immediate from outer-semicontinuity of the normal cone map $N_{\sdom f}$.

To see \ref{it:1}, suppose that $f$ is vertically continuous at $\bar{x}\in\inter\dom f$. Since the normal cone to the domain of $f$ at $\bar{x}$ consists of the zero vector, we deduce $N_{\sepi f}(\bar{x},f(\bar{x})+\frac{1}{n})=\{0\}$, for all sufficiently large integers $n$. By \cite[Exercise 6.19]{VA}, we deduce that each such point $(\bar{x}, f(\bar{x})+\frac{1}{n})$ lies in the interior of the epigraph $\epi f$. Consequently for any sequence $x_i\to\bar{x}$, we have
$$\lp_{i\to\infty} f(x_i)\leq f(\bar{x})+\frac{1}{n},$$
for all large indices $n$. Letting $n$ tend to infinity, we deduce that $f$ is upper-semicontinuous at $\bar{x}$. The result follows.

To see \ref{it:2}, suppose that $f$ is continuous at $x\in\dom f$, relative to the domain of $f$. Then for any real $r>f(x)$ there exists an $\epsilon >0$ so that the epigraph $\epi f$ coincides with the product set, $\dom f\times [r-\epsilon,r+\epsilon]$, locally around $(x,r)$. In fact, this follows just from upper-semicontinuity of $f$ at $x$, relative to $\dom f$. We deduce $N_{\sepi f}(x,r)=N_{\sdom f}(x)\times \{0\}$. The result follows by \ref{it:0}.

To see \ref{it:3}, consider a pair $(\bar{x},r)$ with $r>f(\bar{x})$ and observe
\begin{align*}
(v,\alpha)\in N_{\sepi f}(\bar{x},r) &\Longleftrightarrow \langle (v,\alpha),(\bar{x},r)\rangle \geq \langle (v,\alpha),(x',r')\rangle ~~\textrm{ for all } (x',r')\in\epi f \\
&\Longleftrightarrow \alpha=0 \textrm{ and } v\in N_{\sdom f}(\bar{x}).
\end{align*}
Appealing to \ref{it:0}, we obtain the result.

The proof of \ref{it:4} follows from a standard nonsmooth chain rule. We outline the argument below. Without loss of generality, we can assume that the representation $f=g\circ F$ holds on all of $\R^n$. Observe $$\epi f=\{(x,r)\in\R^{n+1}: G(x,r)\in\epi g\},$$ for the mapping $G\colon\R^{n+1}\to\R^{m+1}$ defined by $G(x,r)=(F(x),r)$. We would like to use the chain rule appearing in \cite[Theorem 6.14]{VA} to compute the normal cone to $\epi f$. To this end, consider a pair $(x,r)\in \epi f$ and a vector $(y,\alpha)\in N_{\sepi g}(G(x,r))$. We have 
\begin{align*}
0=\nabla G(x,r)^{*} (y,\alpha) &\Longleftrightarrow \alpha= 0 \textrm{ and } \nabla F(x)^{*} y=0\\
&\Longleftrightarrow y\in N_{\sdom g}(F(x)) \textrm{ and } \nabla F(x)^{*} y=0\\
&\Longleftrightarrow y=0,
\end{align*}
where the last equivalence follows from the qualification condition (\ref{eqn:qual}). Applying the chain rule \cite[Theorem 6.14]{VA}, we deduce 
$$N_{\sepi f}(x,r)= \nabla G(x,r)^{*}N_{\sepi g}(G(x,r)),$$ for all pairs $(x,r)\in \epi f$. In particular, if we have $r > f(x)$, or equivalently $r> g(F(x))$, we deduce 
$$N_{\sepi f}(x,r)=\nabla F(x)^{*}N_{\sdom g}(F(x))\times \{0\}.$$
The right hand side coincides with $N_{\sdom f}(x)\times\{0\}$ by \cite[Theorem 3.3]{amen}. The result follows by appealing to \ref{it:0}.
\end{proof}

As can be seen from the proposition above, vertical continuity bridges the gap between continuity of the function on the interior of the domain and continuity on the whole domain, and hence the name. In summary, all convex and amenable functions have this property, as do functions that are continuous on their domains. An illustrative example is provided by the proper, lower semi-continuous, convex (directionally Lipschitzian) function $f$ on $\R^2$, defined by 
\begin{displaymath}
   f(x,y) = \left\{
     \begin{array}{ll}
       y^2/2x &\textrm{if } x >0\\
       0 &\textrm{if }x=0, y=0 \\
       \infty &\textrm{otherwise}
     \end{array}
   \right.
\end{displaymath} 
This function is discontinuous at the origin, despite being vertically continuous there.

In the sequel, we will need the following basic result.
\begin{proposition}\label{prop:help}
Consider a set $M\subset \R^n$ and a function $f\colon\R^n\to\overline{\R}$ that is finite-valued and smooth on $M$.
Then, at any point $\bar{x}\in M$, we have $$\hat{\partial} f(\bar{x})\subset \nabla g(\bar{x})+N_M(\bar{x}),$$ where $g\colon\R^n\to\R$ is any smooth function agreeing with $f$ on $M$ on a neighborhood of $\bar{x}$.
\end{proposition}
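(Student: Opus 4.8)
The plan is to reduce the statement to the first-order (analytic) description of the Fréchet subdifferential and then compare it, along $M$, against the Taylor expansion of $g$. Since every subdifferential in this paper is defined epigraphically, the first step is to pass from the defining condition $(v,-1)\in\hat{N}_{\sepi f}(\bar{x},f(\bar{x}))$ to the equivalent analytic characterization (see \cite[Theorem 8.9]{VA}): a vector $v$ lies in $\hat{\partial}f(\bar{x})$ precisely when
$$f(x)\geq f(\bar{x})+\langle v,x-\bar{x}\rangle + o(|x-\bar{x}|)\quad\textrm{ as } x\to\bar{x}.$$
Note that for $x\notin\dom f$ this inequality is vacuous, so nothing is lost by working with extended-real values.

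Next I would restrict attention to points $x\in M$ near $\bar{x}$, where $f$ agrees with the smooth function $g$; in particular $f(\bar{x})=g(\bar{x})$. Smoothness of $g$ supplies the expansion
$$g(x)=g(\bar{x})+\langle\nabla g(\bar{x}),x-\bar{x}\rangle + o(|x-\bar{x}|)\quad\textrm{ as } x\to\bar{x}.$$
Substituting $f(x)=g(x)$ for $x\in M$ into the subgradient inequality and cancelling the common constant $f(\bar{x})=g(\bar{x})$, I obtain
$$\langle v-\nabla g(\bar{x}),\,x-\bar{x}\rangle\leq o(|x-\bar{x}|)\quad\textrm{ as } x\to\bar{x}\textrm{ in } M.$$
This is exactly the defining condition for $v-\nabla g(\bar{x})$ to belong to the Fréchet normal cone $\hat{N}_M(\bar{x})$. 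Since $\hat{N}_M(\bar{x})\subset N_M(\bar{x})$ always holds, I conclude $v-\nabla g(\bar{x})\in N_M(\bar{x})$, that is $v\in\nabla g(\bar{x})+N_M(\bar{x})$, as claimed. In fact this argument yields the sharper inclusion $\hat{\partial}f(\bar{x})\subset\nabla g(\bar{x})+\hat{N}_M(\bar{x})$.

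I expect no serious obstacle here; the computation is routine once the analytic description is in hand. The only point demanding care is the bookkeeping of the two $o(|x-\bar{x}|)$ terms: both the subgradient inequality and the Taylor expansion hold as $x\to\bar{x}$ over all of $\R^n$, so their restriction to the (possibly lower-dimensional) subset $M$ remains $o(|x-\bar{x}|)$ as $x\to\bar{x}$ in $M$, which is what legitimizes the comparison. Everything is independent of the particular smooth extension $g$ chosen, since $\nabla g(\bar{x})$ enters only through the expansion of $g$ restricted to $M$.
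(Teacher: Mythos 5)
Your proof is correct. In substance it is the paper's own argument with the ``standard check'' unpacked: the paper introduces the auxiliary function $h$ equal to $f$ on $M$ and $+\infty$ elsewhere and runs the chain $\hat{\partial} f(\bar{x})\subset\hat{\partial} h(\bar{x})=\hat{\partial}\big(g(\cdot)+\delta_M(\cdot)\big)(\bar{x})\subset\nabla g(\bar{x})+N_M(\bar{x})$, where the first inclusion holds because $h\geq f$ with $h(\bar{x})=f(\bar{x})$, and the last step is the exact sum rule $\hat{\partial}(g+\delta_M)(\bar{x})=\nabla g(\bar{x})+\hat{N}_M(\bar{x})$ for a smooth function plus an indicator. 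Your restriction of the subgradient inequality to $x\in M$ plays precisely the role of passing from $f$ to $h$, and your comparison with the Taylor expansion of $g$ is exactly the content of that sum rule, so the two proofs differ only in packaging: yours is self-contained from the paper's definitions (plus the geometric-to-analytic equivalence of \cite[Theorem 8.9]{VA}), while the paper compresses the same computation into an appeal to Fr\'{e}chet calculus. Both versions in fact deliver the sharper inclusion $\hat{\partial} f(\bar{x})\subset\nabla g(\bar{x})+\hat{N}_M(\bar{x})$, and your care with the restriction of the two $o(|x-\bar{x}|)$ terms to $M$ is exactly the bookkeeping the paper leaves to the reader.
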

\begin{proof}
Define a function $h:\R^n\to\overline{\R}$ agreeing with $f$ on $M$ and equalling plus infinity elsewhere.
It is standard to check that the chain of inclusions,
$$\hat{\partial} f(\bar{x})\subset\hat{\partial} h(\bar{x})=\hat{\partial} (g(\cdot)+\delta_M(\cdot))(\bar{x})\subset\nabla g(\bar{x})+N_M(\bar{x}),$$ holds.
\end{proof}

\section{Characterization of the Clarke Subdifferential.}\label{sec:char}
Directionally Lipschitzian functions play an important role in optimization and are close relatives of locally Lipschitz functions~\cite{BLO, VA}. Indeed, Rockafellar  showed that epi-Lipschitzian sets, up to a change of coordinates, are epigraphs of Lipschitz functions \cite[Section 4]{Clarke_roc}. 

As was mentioned in the introduction, a key feature of Clarke's construction is that the Clarke subdifferential of a locally Lipschitz function $f$, on $\R^n$, can be described purely in terms of gradient information. It is then reasonable to hope that the same property holds for continuous directionally Lipschitzian functions, but this is too good to be true. Though such functions are differentiable almost everywhere \cite{cone_mon}, their gradients may fail to generate the entire Clarke subdifferential.
%Indeed, Rockafellar~\cite[Section 4]{Clarke_roc} showed that any epi-Lipschitzian set is locally Lipschitz homeomorphic to the epigraph of a Lipschitz continuous function.  
%The similarity does not stop there. Burke, Borwein, and Lewis~\cite{cone_mon} proved that a continuous, directionally Lipschitz function $f\colon U\to\R$, defined on an open set $U\subset\R^n$, is differentiable almost everywhere on $U$ (in the sense of Lebesgue measure), thereby generalizing the classical result of Rademacher on generic differentiability of Lipschitz functions. 
A simple example is furnished by the classical ternary Cantor function --- a nondecreasing, continuous, and therefore  directionally Lipschitzian function, with zero derivative at each point of differentiability. The Clarke subdifferential of this function does not identically consist of the zero vector \cite[Exercise 3.5.5]{Borwein-Zhu}, and consequently cannot be recovered from classical derivatives. This example notwithstanding, one does not expect the Cantor function to arise often in practice.

Nonsmoothness arises naturally in many applications, but not pathologically so. On the contrary, nonsmoothness is usually highly structured. Often such structure manifests itself through existence of a {\em stratification}. In the current work, we consider so-called {\em stratifiable} functions. Roughly speaking, domains of such function can be decomposed into smooth manifolds (called strata), which fit together in a ``regular'' way, and so that the function is smooth on each such stratum. In particular, this rich class of functions includes all semi-algebraic, and more generally, all o-minimally defined functions. See for example \cite{DM}. We now make this notion precise.
\begin{definition}[Locally finite stratifications]
{\rm Consider a set $Q$ in $\R^n$. A {\em locally finite stratification} of $Q$ is a partition of $Q$ into disjoint manifolds $M_i$ (called strata) satisfying 
\begin{itemize}
\item {\bf (frontier condition)} for each index $i$, the the closure of $M_i$ in $Q$ is the union of some $M_j$'s, and
\item {\bf (local finiteness)} each point $x\in Q$ has a neighborhood that intersects only finitely many strata.
\end{itemize}
We say that a set $Q\subset\R^n$ is {\em stratifiable} if it admits a locally finite stratification.
} 
\end{definition}

Observe that due to the frontier condition, a stratum $M_i$ intersects the closure of another stratum $M_j$ if and only if the inclusion $M_i\subset \cl M_j$ holds. Consequently, given a locally finite stratification of a set $Q$ into manifolds $\{M_i\}$, we can impose a natural partial order on the strata, namely 
$$M_i\preceq M_j \Leftrightarrow  M_i\subset \cl M_j.$$ 
A good example to keep in mind is the partition of a convex polyhedron into its open faces.
%We say that a locally finite stratification $\{M_i\}$ of a set $Q$ is {\em compatible} with a subset $\Omega\subset Q$ if the intersection $M_i\cap \Omega$ is either empty or coincides with a stratum $M_j$ for some index $j$.

\begin{definition}[Stratifiable functions]
{\rm A function $f\colon\R^n\to\overline{\R}$ is {\em stratifiable} if there exists a locally finite stratification of $\dom f$ so that $f$ is smooth on each stratum.  }
\end{definition}

The following result nicely illustrates the geometric insight one obtains by working with stratifications explicitly.

\begin{proposition}[Dense differentiability]\label{prop:cl_int}
Consider a proper stratifiable function $f\colon\R^n\to\overline{\R}$ that is directionally Lipschitzian at all points of $\dom f$ near $\bar{x}$, and let $\Omega$ be any dense subset of $\dom f$. Then the set $\Omega\cap \dom\nabla f$ is dense in the domain of $f$, in the $f$-attentive sense, locally near $\bar{x}$. 
\end{proposition}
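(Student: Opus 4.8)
The plan is to show that for every point $x\in\dom f$ sufficiently close to $\bar x$ there is a sequence $x_i\in\Omega\cap\dom\nabla f$ with $x_i\xrightarrow[f]{}x$; this is precisely $f$-attentive density of $\Omega\cap\dom\nabla f$ in $\dom f$ near $\bar x$. Throughout I fix a neighborhood $W$ of $\bar x$ on which $f$ is directionally Lipschitzian at every point of $\dom f\cap W$ and which meets only finitely many strata (possible by local finiteness), and I work with an arbitrary $x\in\dom f\cap W$.

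The driving observation is that directional Lipschitzness forces $(x,f(x))$ to lie in the closure of the interior of the epigraph. Indeed, since $f$ is directionally Lipschitzian at $x$, the epigraph $\epi f$ is epi-Lipschitzian at $(x,f(x))$, and by Rockafellar's description \cite{Clarke_roc} it coincides locally, up to a rotation, with the epigraph of a continuous (indeed Lipschitz) function; such an epigraph is the closure of its interior, so $(x,f(x))\in\cl(\inter\epi f)$. I would then choose points $(y_k,s_k)\in\inter\epi f$ with $(y_k,s_k)\to(x,f(x))$, together with radii $\rho_k\downarrow 0$ satisfying $B_{\rho_k}(y_k,s_k)\subset\inter\epi f$. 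Slicing this ball at the fixed height $s_k$ shows $(x',s_k)\in\inter\epi f$ for every $x'\in B_{\rho_k}(y_k)$; projecting, this gives $B_{\rho_k}(y_k)\subset\inter\dom f$ and $f(x')\le s_k$ throughout this ball.

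It remains to locate, inside each ball $B_{\rho_k}(y_k)$, a good differentiability point belonging to $\Omega$. Here I would use the stratification: the strata of dimension $n$ are open subsets of $\R^n$ on which $f$ is smooth, hence are contained in $\dom\nabla f$, whereas the finitely many lower-dimensional strata meeting $W$ have $n$-dimensional measure zero, so the union of the top-dimensional strata is dense and open in $B_{\rho_k}(y_k)$. Since $\Omega$ is dense in $\dom f$, it meets this nonempty open set; pick $x_k$ in the intersection. Then $x_k\in\Omega\cap\dom\nabla f$, while $|x_k-x|\le\rho_k+|y_k-x|\to 0$ and $f(x_k)\le s_k\to f(x)$. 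Combining $\lp_k f(x_k)\le f(x)$ with the lower semicontinuity of $f$ (part of directional Lipschitzness), which gives $\lf_k f(x_k)\ge f(x)$, yields $f(x_k)\to f(x)$, so $x_k\xrightarrow[f]{}x$, as required.

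The main obstacle is the $f$-attentive requirement $f(x_k)\to f(x)$: for a general stratifiable function one can approach a boundary point of $\dom f$ through interior strata while the function values jump, and then no gradient sequence converges $f$-attentively. This is exactly what the directionally Lipschitzian hypothesis rules out, through the closure-of-interior property of the epigraph; the lower semicontinuity then upgrades the one-sided bound $f(x_k)\le s_k$ into genuine convergence. The remaining ingredients --- differentiability on the top-dimensional strata and negligibility of the lower-dimensional ones --- are routine consequences of the stratification together with local finiteness.
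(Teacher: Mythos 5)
Your proof is correct, but it takes a genuinely different route from the paper's. The paper argues by contradiction inside the stratification formalism: if $f$-attentive density fails, some $f$-attentive neighborhood $V$ misses every $n$-dimensional stratum; among the finitely many (hence lower-dimensional) strata meeting $V$ one picks a stratum $M$ maximal for the order $\preceq$, and the frontier condition then isolates $M$ from the other strata, so that $f$-attentively near a point $y\in V\cap M$ the function behaves like $f+\delta_M$; consequently $\partial f(y)$ is a nontrivial affine subspace, so $\partial^{\infty}f(y)$ contains a line, contradicting directional Lipschitzness at $y$. You instead build the sequence directly: pointedness, routed through Rockafellar's structure theorem for epi-Lipschitzian sets (which the paper itself cites), gives $(x,f(x))\in\cl(\inter\epi f)$; horizontal slices of interior balls of the epigraph produce balls $B_{\rho_k}(y_k)\subset\inter\dom f$ on which $f\le s_k$, density of $\Omega$ plus Lebesgue-negligibility of the lower-dimensional strata supplies points $x_k\in\Omega\cap\dom\nabla f$ in those balls, and lower semicontinuity converts $\limsup_k f(x_k)\le f(x)$ into genuine $f$-attentive convergence. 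Your route buys a leaner use of the hypotheses: the stratification enters only through openness of the $n$-dimensional strata and nullity of the rest (no frontier condition, no partial order), and directional Lipschitzness is invoked only at the point $x$ being approximated, rather than at auxiliary nearby points $y$ as in the paper. The paper's route, in exchange, needs no structure theorem for epi-Lipschitzian sets and exhibits the precise obstruction that pointedness excludes, namely subdifferentials along maximal lower-dimensional strata degenerating into affine subspaces. Two details worth making explicit in your write-up: for large $k$ the balls $B_{\rho_k}(y_k)$ lie inside $W$, so only the finitely many strata meeting $W$ are in play; and the paper's notion of ``locally lower semicontinuous'' at $x$ (built into its definition of directionally Lipschitzian) does indeed yield $\liminf_{z\to x}f(z)\ge f(x)$, which your final step uses.
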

\begin{proof} Consider a locally finite stratification of $\dom f$ into manifolds $M_i$ so that $f$ is smooth on each stratum. Suppose for the sake of contradiction that there exists a point $x\in\dom f$ arbitrarily close to $\bar{x}$ and an $f$-attentive neighborhood $V=\{y\in\R^n:|y-x|<\epsilon, |f(y)-f(x)|<\delta\}$ so that $V\cap \Omega$ does not intersect any strata of dimension $n$. Shrinking $V$, we may assume that $V$ intersects only finitely many strata, say $\{M_j\}$ for $j\in J:=\{1,\ldots,k\}$, and that the inclusion $x\in\cl M_j$ holds for each index $j\in J$. Notice that since $f$ is continuous on each stratum, the set $V$ is a union of open subsets of the strata $M_j$ for $j\in J$.  

Now among the strata $M_j$ with $j\in J$, choose a stratum $M$ that is maximal with respect to the partial order $\preceq$.
Clearly, we have
$$M\cap \cl M_j=\emptyset, \textrm{ for each } j\in J \textrm{ with } M_j\neq M.$$
Now let $y$ be any point of $V\cap M$ and observe that there exists a neighborhood $Y$ of $y$ so that the functions $f$ and $f+\delta_{M}$ coincide on $Y\cap M$. We deduce that $\partial f(y)$ is a nontrivial affine subspace. Since $f$ is directionally Lipschitzian at all points in $\dom f$ near $\bar{x}$, and in particular at $y$, we have arrived at a contradiction. Thus $\Omega\cap\dom\nabla f$ is dense  (in the $f$-attentive sense) in the domain of $f$, locally near $\bar{x}$. 
\end{proof}

%\begin{remark}
%{\rm
%Observe how the geometry came to the fore in the proof above. Indeed, the argument showed something stronger than what was explicitly stated. Given any stratifiable function $f$ and a point $\bar{x}\in \dom f$, if the interior of $\dom f$ is not dense in $\dom f$ near $\bar{x}$, then there exists a point $y$ arbitrarily close to $\bar{x}$ so that the subdifferential $\partial f(y)$ is a nontrivial affine subspace.
%}
%\end{remark}
 %Utilizing this type of structure explicitely, one can construct explicit arguments that allow the underlying geometry to surface.  
In this section, we will derive a characterization formula for the Clarke subdifferential of a stratifiable, vertically continuous, directionally Lipschitzian function $f\colon\R^n\to\overline{\R}$ . This formula will only depend on the gradients of $f$ and on the normal cone to the domain. It is important to note that the characterization formula, we obtain, is independent of any particular stratification of $\dom f$; one only needs to know that $f$ is stratifiable in order to apply our result. The argument we present is entirely constructive and is motivated by the following fact.
\begin{proposition}\label{prop:char}
Consider a closed, convex cone $Q\subset \R^n$, which is neither $\R^n$ nor a half-space. Then the equality,
$$Q=\cone (\bd Q).$$
holds.
\end{proposition}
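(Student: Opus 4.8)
The plan is to prove the two inclusions separately, the inclusion $\cone(\bd Q) \subseteq Q$ being immediate: since $Q$ is closed we have $\bd Q \subseteq Q$, and a convex cone equals its own convex conical hull, so $\cone(\bd Q) \subseteq \cone Q = Q$. All the content lies in the reverse inclusion $Q \subseteq \cone(\bd Q)$, which I would establish pointwise. If $\inter Q = \emptyset$ then $\bd Q = \cl Q \setminus \inter Q = Q$ and there is nothing to prove, so I may assume $Q$ has nonempty interior and reduce to showing that every $x \in \inter Q$ lies in $\cone(\bd Q)$ (boundary points being trivially included).

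For such an $x$, the idea is to run a full line through $x$ and catch the boundary on both sides, writing $x$ as a convex combination of two boundary points. Fix a direction $v$ and consider $t \mapsto x + tv$. Because $Q$ is a convex cone, its recession cone is $Q$ itself, so the ray $\{x + tv : t \ge 0\}$ stays in $Q$ for all $t \ge 0$ exactly when $v \in Q$; if $v \notin Q$ then, since $x \in \inter Q$, the set $\{t \ge 0 : x + tv \in Q\}$ is a nondegenerate bounded interval $[0, t_+]$ and $x + t_+ v \in \bd Q$. Applying the same reasoning to $-v$, if moreover $-v \notin Q$ we obtain $t_- > 0$ with $x - t_- v \in \bd Q$. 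Then $x$ is the convex combination $\frac{t_-}{t_++t_-}(x+t_+v) + \frac{t_+}{t_++t_-}(x-t_-v)$, so $x \in \conv(\bd Q) \subseteq \cone(\bd Q)$. Thus it suffices to produce a single direction $v$ lying outside $Q \cup (-Q)$.

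The crux --- and the only place the hypotheses are used --- is the claim that $Q \cup (-Q) \neq \R^n$ whenever $Q$ is neither $\R^n$ nor a half-space. I would prove the contrapositive: if $Q \cup (-Q) = \R^n$ and $Q \neq \R^n$, then $Q$ is a half-space. Since $Q$ is a closed convex cone properly contained in $\R^n$, its polar is nontrivial, so there is a nonzero $a$ with $Q \subseteq H := \{z : \langle a, z\rangle \le 0\}$. For any $z \in H \setminus Q$, the hypothesis forces $-z \in Q$, whence $\langle a, z \rangle \ge 0$ and therefore $\langle a, z\rangle = 0$; equivalently $\inter H \subseteq Q$. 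Taking closures and using that $Q$ is closed gives $H = \cl(\inter H) \subseteq Q \subseteq H$, so $Q = H$ is a half-space, as claimed.

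I expect the supporting-hyperplane (polar cone) argument of the last paragraph to be the main obstacle, since it is the step that must correctly exploit the exclusion of half-spaces; the line--chord construction and the trivial inclusion are routine. Minor points to handle carefully are the degenerate case $\inter Q = \emptyset$ and the verification that the exit parameter $t_+$ is both positive (because $x$ is interior) and finite (because $v$ is not a recession direction, using $\mathrm{rec}\,Q = Q$).
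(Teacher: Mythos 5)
Your proof is correct, but there is nothing in the paper to compare it against: the paper states this proposition without proof, deferring in a footnote to \cite[Proposition 3]{CMW}, where the same statement appears. Your write-up therefore supplies a self-contained argument where the paper relies on a citation. Checking it on its own merits, every step holds: the inclusion $\cone(\bd Q)\subseteq Q$ and the empty-interior case (where $\bd Q=Q$ by closedness) are routine, as you say; the chord construction is valid because the recession cone of the closed convex cone $Q$ is $Q$ itself, so for any $v\notin Q\cup(-Q)$ both exit times $t_{\pm}$ are finite, and they are positive since $x\in\inter Q$, giving $x$ as a convex combination of two boundary points; and the crux --- that $Q\cup(-Q)=\R^n$ forces $Q$ to be $\R^n$ or a half-space --- is exactly where the hypotheses must enter, and your contrapositive settles it cleanly: a nonzero $a$ in the polar cone gives $Q\subseteq H:=\{z:\langle a,z\rangle\le 0\}$, any $z\in H\setminus Q$ must satisfy $\langle a,z\rangle=0$, so $\inter H\subseteq Q$, and closedness yields $H=\cl(\inter H)\subseteq Q\subseteq H$, i.e.\ $Q=H$. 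This is an elementary and complete proof of the fact the paper imports from the literature.
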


Hence in light of Proposition~\ref{prop:char}, in order to obtain a representation formula for the Clarke subdifferential, it is sufficient to study the (relative) boundary structure of the Clarke normal cone. This is precisely the route we take.
\footnote{The idea to study the boundary structure of the Clarke normal cone in order to establish a convenient representation for the Clarke subdifferential is by no means new. For instance the same idea was used by Rockaffelar to establish the representation formula for the convex subdifferential \cite[Theorem 25.6]{rock}. While working on this paper, the authors became aware that the same strategy was also used to prove a representation formula for the subdifferential of finite-valued, continuous functions whose epigraph has positive reach 
\cite[Theorem 4.9]{CM}, \cite[Theorem 2]{CMW}. In particular, Proposition~\ref{prop:char} also appears as \cite[Proposition 3]{CMW}.
}

\begin{lemma}[Frech\'{e}t accessibility]\label{lem:access_set_main}
Consider a closed set $Q\subset\R^n$, a ${\bf C}^2$-manifold $M\subset Q$, and a point $\bar{x}\in M$. Recall that the inclusion $\hat{N}_Q(\bar{x})\subset N_M(\bar{x})$ holds. Suppose that a vector $\bar{v}\in \hat{N}_Q(\bar{x})$ lies in the boundary of $\hat{N}_Q(\bar{x})$, relative to the linear space $N_M(\bar{x})$. Then there exists a sequence $(x_i,v_i)\to(\bar{x},\bar{v})$, with  $v_i\in N^P_Q(x_i)$, and so that all points $x_i$ lie outside of $M$. 
\end{lemma}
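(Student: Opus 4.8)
The plan is to realize $\bar v$ as a limit of proximal normals by projecting carefully chosen perturbed points onto $Q$. Write $L := N_M(\bar x)$, which by Theorem~\ref{thm:clarke_man} is a linear subspace, and set $K := \hat N_Q(\bar x) \subseteq L$, a closed convex cone. Since $\bar v$ lies in the boundary of $K$ relative to $L$, the supporting hyperplane theorem applied inside $L$, together with the fact that $K$ is a cone, furnishes a unit vector $u \in L$ with $\langle u, w\rangle \le 0$ for every $w \in K$ and $\langle u, \bar v\rangle = 0$. For small $t>0$ and a fixed small parameter $\eta>0$, I would then consider the perturbed point $y_t := \bar x + t(\bar v + \eta u)$ and let $x_t$ be any point of the projection $P_Q(y_t)$ (nonempty, since $Q$ is closed). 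Because $x_t \in P_Q(y_t)$, the vector $y_t - x_t$ is a proximal normal to $Q$ at $x_t$, and since $N^P_Q(x_t)$ is a cone, so is $v_t := \frac{1}{t}(y_t - x_t)$.

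The key geometric step is a dichotomy forcing $x_t \notin M$. Note that $y_t \in \bar x + L = \bar x + N_M(\bar x)$, so for $t$ small enough that $y_t$ lies in a prox-normal neighborhood of $M$ at $\bar x$, Theorem~\ref{thm:prox} gives $P_M(y_t) = \{\bar x\}$. Now if $x_t$ were to lie in $M$, then comparing distances (using $M \subseteq Q$) would show $x_t \in P_M(y_t) = \{\bar x\}$, i.e. $x_t = \bar x$; but $\bar x \in P_Q(y_t)$ would force $\bar v + \eta u = \frac{1}{t}(y_t - \bar x) \in N^P_Q(\bar x) \subseteq K$, contradicting $\langle u, \bar v + \eta u\rangle = \eta > 0$. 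Hence $x_t \notin M$ for all small $t$. Since $|y_t - x_t| \le |y_t - \bar x| \to 0$, we also have $x_t \to \bar x$.

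It remains to control $v_t$. Writing $b_t := \frac{1}{t}(x_t - \bar x)$, a bounded family, the projection inequality $|y_t - x_t| \le |y_t - \bar x|$ expands to $|b_t|^2 \le 2\langle \bar v + \eta u, b_t\rangle$, while the Fréchet property of $\bar v \in \hat N_Q(\bar x)$ applied along $x_t \to \bar x$ yields $\limsup_t \langle \bar v, b_t\rangle \le 0$. Combining these with $\langle u, b_t\rangle \le |b_t|$ gives $\limsup_t |b_t| \le 2\eta$, whence $\limsup_t |v_t - \bar v| \le 3\eta$ since $v_t = \bar v + \eta u - b_t$. This estimate, squeezing the tangential drift $b_t$ of the projection between the projection inequality and the first-order Fréchet inequality, is the main technical obstacle; everything else is bookkeeping. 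Finally, a diagonal argument over a sequence $\eta_k \downarrow 0$, choosing for each $k$ a sufficiently small $t_k$, produces points $x_k := x_{t_k} \notin M$ and proximal normals $v_k := v_{t_k} \in N^P_Q(x_k)$ with $(x_k, v_k) \to (\bar x, \bar v)$, as required. (The degenerate case $\bar v = 0$ is covered uniformly, since $\bar v \in \rb K$ forces $K \neq L$ and hence the existence of such a $u$.)
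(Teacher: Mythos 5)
Your proposal is correct and takes essentially the same route as the paper's proof: both project the perturbed points $\bar{x}+t(\bar{v}+\textrm{perturbation})$ onto $Q$, use the prox-normal neighborhood of $M$ (Theorem~\ref{thm:prox}) to force the projections to land outside $M$, and combine the projection inequality with the Fr\'{e}chet normal inequality to kill the tangential drift $\frac{x_t-\bar{x}}{t}$. The only cosmetic difference is that the paper uses a perturbation $t\bar{w}$ that shrinks with $t$ (taking $y(t)=\bar{x}+t(\bar{v}+t\bar{w})$), so that $v_t\to\bar{v}$ in a single limit, whereas you fix the perturbation size $\eta$ and conclude with a diagonal argument over $\eta_k\downarrow 0$.
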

\begin{proof} Choose a vector $\bar{w}\in N_M(\bar{x})$ in such a way so as to guarantee $$\bar{v}+t\bar{w}\notin \hat{N}_Q(\bar{x}), ~~\textrm{ for all } t>0.$$ 
%This can be achieved by choosing any nonzero vector $\bar{w}\in N_M(\bar{x})\cap N_{\hat{N}_Q(\bar{x})}(\bar{v})$.  
Consider the vectors
\begin{equation}\label{eq:proj}
y(t):=\bar{x}+t(\bar{v}+t\bar{w}),
\end{equation}
and observe $y(t)\notin \bar{x}+\hat{N}_Q(\bar{x})$ for every $t>0$. Consider a selection of the projection operator, $$x(t)\in P_Q(y(t)).$$ Clearly, $y(t)\to \bar{x}$ and $x(t)\to \bar{x}$, as $t\to 0$. Observe 
\begin{align}
\frac{y(t)-x(t)}{t}&\in N^{P}_Q(x(t)), \notag\\
x(t)&\neq \bar{x},\label{eqn:neq}
\end{align}
for every $t$.

We claim that the points $x(t)$ all lie outside of $M$ for all sufficiently small $t>0$. Indeed, if this were not the case, then for sufficiently small $t$, the points $x(t)$ and $y(t)$ would lie in the prox-normal neighborhood of $M$ near $\bar{x}$, and we would deduce  
$$x(t)=P_M(y(t))=\bar{x},$$ contradicting (\ref{eqn:neq}).  

Thus all that is left is to show the convergence, 
$\frac{y(t)-x(t)}{t}\to\bar{v}$.
To this end, observe that from (\ref{eq:proj}), we have 
\begin{equation}\label{eqn:conv}
\frac{y(t)-\bar{x}}{t}\to \bar{v}.
\end{equation}
Hence it suffices to argue $\frac{x(t)-\bar{x}}{t}\to 0$.
By definition of $x(t)$, we have 
\begin{equation}\label{eqn:bounded}
|y(t)-\bar{x}|\geq |(y(t)-\bar{x})+(\bar{x}-x(t))|.
\end{equation}
Squaring and simplifying the inequality above, we obtain
\begin{equation}\label{eq:acc2}
\Big\langle \frac{y(t)-\bar{x}}{t}, \frac{x(t)-\bar{x}}{t}\Big\rangle \geq \frac{1}{2}\Big|\frac{x(t)-\bar{x}}{t}\Big|^2.
\end{equation} 

From (\ref{eqn:conv}) and (\ref{eqn:bounded}), we deduce that the vectors $\frac{x(t)-\bar{x}}{t}$ are bounded as $t\to 0$. Consider any limit point $\gamma \in\R^n$.
Taking the limit in (\ref{eq:acc2}), we obtain 
\begin{equation}\label{eqn:ineq}
\langle \bar{v}, \gamma\rangle \geq \frac{1}{2}|\gamma|^2.
\end{equation}
Since $\bar{v}$ is a Frech\'{e}t normal, we deduce 
$$\langle \bar{v},x(t)-\bar{x} \rangle \leq o(|x(t)-\bar{x}|).$$
It immediately follows that 
$$\langle \bar{v},\gamma\rangle \leq 0,$$
and in light of (\ref{eqn:ineq}),
we obtain $\gamma =0$. Hence 
$$\frac{y(t)-x(t)}{t}\to\bar{v},$$ as we claimed.
\end{proof}

\begin{remark}
{\rm
We note, in passing, that an analogue of Lemma~\ref{lem:access_set_main} (with an identical proof) holds when $M$ is simply ``prox-regular'', in the sense of \cite{prox_reg}, around $\bar{x}$. In particular, the lemma is valid when $M$ is a convex set. } 
\end{remark}

%\begin{cor}[Accessibility for functions]\label{cor:access_func}
%Consider a function $f\colon\R^n\to\overline{\R}$ and a manifold $M$, such that the restriction $f\big|_M$ is smooth. Let $g\colon\R^n\to\R$ be any smooth function agreeing with $f$ on $M$ near $\bar{x}$, and suppose that a subgradient $\bar{v}\in\hat{\partial} f(\bar{x})$ lies in the boundary of $\hat{\partial} f(\bar{x})$, relative to the affine space 
%$\nabla g(\bar{x})+N_M(\bar{x})$. Then there exists a
%sequence $(x_i,f(x_i),v_i)\to (\bar{x},f(\bar{x}),\bar{v})$, with $v_i\in \partial_P f(x_i)$ and $x_i\notin M$ for each index $i$.
%\end{cor}
%{\pf 
%Observe that the set $K:= \{(x,f(x)):x\in M\}$ is a manifold. 
%%We successively deduce 
%%\begin{align*}
%%\partial_P f(\bar{x})\subset\nabla g(\bar{x})+N_M(\bar{x})&=\hat{\partial}(g+\delta_M)(\bar{x})=\hat{\partial}(f+\delta_M)(\bar{x})\\
%%&=\{v\in\R^n: (v,-1)\in \hat{N}_{\sepi (f+\delta_M)} (\bar{x},f(\bar{x}))\}\\
%%&\subset\{v\in\R^n: (v,-1)\in N_{K} (\bar{x},f(\bar{x}))\},
%%\end{align*} 
%It is standard to check that $(\bar{v},-1)$ lies in the boundary of $N^P_{\sepi f}(\bar{x}, f(\bar{x}))$, relative to the linear space $N_{K}(\bar{x},f(\bar{x}))$.
%The result follows immediately by an application of Lemma~\ref{lem:access_set_main}.
%}\qed

The combination of Lemma~\ref{lem:access_set_main} and Proposition~\ref{prop:char} yields dividends even in the simplest case when the manifold $M$ of Lemma~\ref{lem:access_set_main} is a singleton set. We should emphasize that in the following proposition, we do not even assume that the function in question is directionally Lipschitzian or stratifiable.

\begin{proposition}[Isolated singularity]\label{prop:iso}
Consider a continuous function $f\colon U\to\R$, defined on an open set $U\subset\R^n$. Suppose that $f$ is differentiable on $U\setminus \{\bar{x}\}$ for some point $\bar{x}\in U$, and that $\partial f(\bar{x})\neq \emptyset$. Then 
$$\partial_c f(\bar{x})= \cl \Big(\conv\{\lim_{i\to\infty} \nabla f(x_i):x_i\to\bar{x}\} + \cone\{\lim_{\substack{i\to\infty\\ t_i \downarrow 0}}t_i\nabla f(x_i):x_i\to \bar{x}\} \Big),$$ under the convention that $\conv \emptyset = \{0\}$.
%$$N^{c}_{\sepi f}(\bar{x},f(\bar{x}))=\cone\{\lim_{i\to\infty} \frac{(\nabla f(x_i),-1)}{\sqrt{1+|\nabla f(x_i)|^2}}:x_i\to\bar{x}\}.$$ 
%Consequently,
%$$\conv \partial^{\infty} f(\bar{x})=\conv\{\lim_{i\to\infty}t_i\nabla f(x_i):x_i\to \bar{x}, t_i\to 0\}, \textrm{ and}$$
%$$\partial_c f(\bar{x})=\conv\{\lim_{i\to\infty}\nabla f(x_i):x_i\to \bar{x}\}+ \conv\{\lim_{i\to\infty}t_i\nabla f(x_i):x_i\to \bar{x}, t_i\to 0\}.$$
\end{proposition}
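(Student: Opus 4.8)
The plan is to pass to the epigraph and exploit the definition $\partial_c f(\bar{x})=\{v:(v,-1)\in N^c_{\sepi f}(\bar{x},f(\bar{x}))\}$, where $N^c_{\sepi f}=\cl\conv N_{\sepi f}$. Write $Q:=\epi f$ and $p:=(\bar{x},f(\bar{x}))$; since $f$ is continuous on $U$, $Q$ is locally closed at $p$. Because $f$ is differentiable at every $x\neq\bar{x}$, the regular normal cone at a graph point $(x,f(x))$ with $x\neq\bar{x}$ is the single ray $\R_{+}(\nabla f(x),-1)$, while at epigraph-interior points it is $\{0\}$. Set
$$D:=\{\lim_{i\to\infty}\nabla f(x_i):x_i\to\bar{x}\},\qquad D_\infty:=\{\lim_{i\to\infty} t_i\nabla f(x_i):x_i\to\bar{x},\ t_i\downarrow 0\},$$
so the claimed right-hand side is $\cl(\conv D+\cone D_\infty)$. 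The inclusion ``$\supseteq$'' is immediate: for $v'\in D$ and $w\in D_\infty$ the vectors $(\nabla f(x_i),-1)$ and $t_i(\nabla f(x_i),-1)$ are regular normals at graph points converging (using continuity) to $p$, so outer semicontinuity of the limiting normal cone gives $(v',-1),(w,0)\in N_{\sepi f}(p)\subseteq N^c_{\sepi f}(p)$. As $N^c_{\sepi f}(p)$ is a closed convex cone and $\partial_c f(\bar{x})$ is its closed convex slice at level $-1$, this forces $\cl(\conv D+\cone D_\infty)\subseteq\partial_c f(\bar{x})$.

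For the reverse inclusion I first dissect the raw limiting normal cone. Classifying a converging sequence of regular normals by the type of its base point yields the decomposition
$$N_{\sepi f}(p)=N_{\mathrm{gr}}\cup\hat{N}_{\sepi f}(p),$$
where $N_{\mathrm{gr}}$ gathers the limits of $\lambda_i(\nabla f(x_i),-1)$ over graph points $x_i\neq\bar{x}$. Separating the cases where the last coordinate of such a limit is negative or zero shows $N_{\mathrm{gr}}=\{\lambda(v',-1):\lambda\ge0,\ v'\in D\}\cup(D_\infty\times\{0\})$; in particular $N_{\mathrm{gr}}$ is a cone, so $\conv N_{\mathrm{gr}}=\cone(P\cup L)$ with $P:=D\times\{-1\}$ and $L:=D_\infty\times\{0\}$. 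Since $N_{\mathrm{gr}}\subseteq N_{\sepi f}(p)$ trivially, the entire problem reduces to absorbing the ``extra'' regular normals at $p$, i.e. to proving $\hat{N}_{\sepi f}(p)\subseteq\cl\conv N_{\mathrm{gr}}$.

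This inclusion is the heart of the argument, and it is precisely what Lemma~\ref{lem:access_set_main} and Proposition~\ref{prop:char} deliver. I apply Lemma~\ref{lem:access_set_main} with $Q=\epi f$ and the $0$-dimensional manifold $M=\{p\}$, for which $N_M(p)=\R^{n+1}$, so that the relative boundary is the ordinary boundary in $\R^{n+1}$. Every boundary point of $\hat{N}_{\sepi f}(p)$ is then a limit of proximal normals at base points $\neq p$, hence at graph or interior points, giving $\bd\,\hat{N}_{\sepi f}(p)\subseteq N_{\mathrm{gr}}$. To reach the interior I invoke Proposition~\ref{prop:char}: the closed convex cone $\hat{N}_{\sepi f}(p)$ lies in the half-space $\R^n\times(-\infty,0]$ (all epigraph normals have nonpositive last coordinate), so it is not $\R^{n+1}$; and it is not a half-space, since the only half-space contained in $\R^n\times(-\infty,0]$ is that half-space itself, which would force $\hat\partial f(\bar{x})=\R^n$ — impossible for a finite function. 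Hence Proposition~\ref{prop:char} yields $\hat{N}_{\sepi f}(p)=\cone(\bd\,\hat{N}_{\sepi f}(p))\subseteq\cl\conv N_{\mathrm{gr}}$, and therefore $N^c_{\sepi f}(p)=\cl\conv N_{\mathrm{gr}}=\cl\cone(P\cup L)$.

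Finally I read off the slice at level $-1$, sidestepping the usual failure of ``slice commutes with closure'' by rescaling: if $(v,-1)\in\cl\cone(P\cup L)$, choose $(v_j,\alpha_j)\in\cone(P\cup L)$ with $(v_j,\alpha_j)\to(v,-1)$; for large $j$ one has $\alpha_j<0$, and dividing by $-\alpha_j$ (legitimate in a cone) lands at level $-1$, where the elementary computation gives $v_j/(-\alpha_j)\in\conv D+\cone D_\infty$, so $v\in\cl(\conv D+\cone D_\infty)$. This establishes ``$\subseteq$'' and closes the loop. The hypothesis $\partial f(\bar{x})\neq\emptyset$ enters only for non-degeneracy — concretely to guarantee $D\neq\emptyset$, so that the convention $\conv\emptyset=\{0\}$ is consistent with a genuine level-$(-1)$ slice. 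I expect the main obstacle to be the reduction $\hat{N}_{\sepi f}(p)\subseteq\cl\conv N_{\mathrm{gr}}$: passing from accessibility of \emph{boundary} normals to the \emph{entire} regular normal cone requires both the boundary-to-cone step of Proposition~\ref{prop:char} and the careful exclusion of the half-space and full-space cases.
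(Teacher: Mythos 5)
Your overall strategy---pass to the epigraph, apply Lemma~\ref{lem:access_set_main} with the singleton manifold $M=\{p\}$, feed the resulting boundary inclusion into Proposition~\ref{prop:char}, and then slice the cone at level $-1$ by rescaling---is exactly the paper's argument. But there is a genuine gap: your exclusion of the half-space case is wrong. You claim $\hat{N}_{\sepi f}(p)$ cannot equal $\R^n\times(-\infty,0]$ because that would force $\hat{\partial}f(\bar{x})=\R^n$, which you call ``impossible for a finite function.'' It is not impossible: take $f(x)=\sqrt{|x|}$ on $\R$ with $\bar{x}=0$. This $f$ is continuous, differentiable off the origin, and satisfies $\partial f(0)=\R\neq\emptyset$, so it meets every hypothesis of the proposition; yet $(f(x)-f(0)-vx)/|x| = |x|^{-1/2}-v\,\mbox{\rm sgn}(x)\to+\infty$ for every $v$, so $\hat{\partial}f(0)=\R$ and $\hat{N}_{\sepi f}(0,0)$ is precisely the half-space $\R\times(-\infty,0]$. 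Proposition~\ref{prop:char} simply does not apply to this cone, and your argument stops there.

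Worse, in this degenerate case the statement you reduced everything to---$\hat{N}_{\sepi f}(p)\subset\cl\conv N_{\mathrm{gr}}$, hence $N^c_{\sepi f}(p)=\cl\conv N_{\mathrm{gr}}$---is actually \emph{false}, so the gap cannot be repaired by finding a better justification for the exclusion. For $f(x)=\sqrt{|x|}$ the gradients blow up near the origin, so $D=\emptyset$ while $D_\infty=\R$; thus $\cl\conv N_{\mathrm{gr}}=\R\times\{0\}$, which does not contain the normal $(0,-1)\in\hat{N}_{\sepi f}(0,0)$, and slicing $\cl\conv N_{\mathrm{gr}}$ at level $-1$ would return $\partial_c f(0)=\emptyset$ instead of the correct value $\R$. (This example also refutes your parenthetical claim that $\partial f(\bar{x})\neq\emptyset$ guarantees $D\neq\emptyset$; that implication is only valid in the non-half-space case.) The proposition survives here only because of the convention $\conv\emptyset=\{0\}$: one must argue directly, as the paper does in its {\sc Case 2}, that when $\hat{N}_{\sepi f}(p)=\R^n\times(-\infty,0]$ the boundary inclusion forces $D_\infty=\R^n$, whence $\partial_c f(\bar{x})=\R^n$ and the right-hand side is $\conv\emptyset+\cone\R^n=\R^n$ as well. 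Your argument is correct and complete in the complementary case, where it coincides with the paper's {\sc Case 1}, but the degenerate case needs this separate treatment rather than dismissal.
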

\begin{proof}
Define the two sets 
\begin{equation*}
E:=\{\lim_{i\to\infty} \nabla f(x_i):x_i\to\bar{x}\},
~~~~H:= \{\lim_{\substack{i\to\infty\\ t_i \downarrow 0}}t_i\nabla f(x_i):x_i\to \bar{x}\},
\end{equation*}
and consider the epigraph $Q:=\epi f$ and the singleton set $M:=\{(\bar{x},f(\bar{x}))\}$. 

By Lemma~\ref{lem:access_set_main} and continuity of $f$, we have 
\begin{equation}\label{eq:try2}
\bd \hat{N}_Q(\bar{x},f(\bar{x}))\subset \cone\big(E\times\{-1\}\big)\cup \big(H\times\{0\}\big).
\end{equation}

{\sc Case 1.} Suppose $\hat{N}_Q(\bar{x},f(\bar{x}))$ is not equal to $\R^n\times [0,-\infty)$.
Then from Proposition~\ref{prop:char} and (\ref{eq:try2}), we deduce 
\begin{equation}\label{eqn:try1}
N^c_Q(\bar{x},f(\bar{x}))= \cl\cone\big(E\times\{-1\}\big)\cup \big(H\times\{0\}\big).
\end{equation}

%Observe that $E$ is nonempty. Indeed if it were otherwise, we would have $N^c_Q(\bar{x},f(\bar{x}))\subset \cone \big(H\times\{0\}\big)$, contradicting the assumption $\partial f(\bar{x})\neq \emptyset$.

From (\ref{eqn:try1}), we see that an inclusion $(v,-1)\in N^c_Q(\bar{x},f(\bar{x}))$ holds if and only if for every $\epsilon > 0$, there exist vectors $y_i\in E\cup H$, and real numbers $\lambda_i> 0$, for $1\leq i \leq  n+1$, satisfying
\begin{align*}
\Big|v&-\Big(\sum_{i:y_i\in E} \lambda_i y_i+ \sum_{i:y_i\in H} \lambda_i y_i\Big)\Big|<\epsilon,\\
1&=\sum_{i: y_i\in E} \lambda_i
\end{align*}
Thus $\partial_c f(\bar{x})=\cl (\conv E+\cone H)$, as we claimed.

{\sc Case 2.} Now suppose $\hat{N}_Q(\bar{x},f(\bar{x}))=\R^n\times [0,-\infty)$. Then from (\ref{eq:try2}), we deduce $H=\R^n$ and $\partial_c f(\bar{x})=\R^n=\conv E+\cone H$, under the convention $\conv \emptyset = \{0\}$. 
\end{proof}

As an illustration, consider the following simple example.
\begin{example}
{\rm Consider the function $f(x,y):=\sqrt[4]{x^4+y^2}$ on $\R^2$. Clearly $f$
is differentiable on $\R^2\setminus \{(0,0)\}$. The gradient has the form 
\[ \nabla f(x,y)=\frac{1}{(x^4+y^2)^{3/4}} \left( \begin{array}{c}
x^{3} \\
\frac{1}{2}y  \end{array} \right).\] 
From Proposition~\ref{prop:iso}, we obtain 
$$\partial_c f(\bar{x})= \cl\Big(\conv\{\lim_{i\to\infty} \nabla f(x_i):x_i\to\bar{x}\} + \cone\{\lim_{\substack{i\to\infty\\ t_i \downarrow 0}}t_i\nabla f(x_i):x_i\to \bar{x}\}\Big),$$
Observe that the vectors $\nabla f(x,0)$ are equal to $(\pm 1,0)$, and the vectors $2\sqrt{|y|} \nabla f(0,\pm y)$ are equal to $(0,\pm 1)$, whenever $x\neq 0\neq y$.
Thus we obtain
$$[-1,1]\times \{0\}\subset \conv\{\lim_{i\to\infty} \nabla f(x_i):x_i\to\bar{x}\}.$$
$$\{0\}\times \R\subset\cone\{\lim_{\substack{i\to\infty\\ t_i \downarrow 0}}t_i\nabla f(x_i):x_i\to \bar{x}\}.$$
Consequently, the inclusion $$[-1,1]\times\R \subset \partial_c f(0,0)$$ holds.
The absolute value of the first coordinate of $\nabla f(x,y)$ is always bounded by $1$, which implies the reverse inclusion above. Thus we have exact equality $\partial_c f(0,0)=[-1,1]\times\R$.
}
\end{example}

We record the following observation for ease of reference.
\begin{corollary}\label{cor:cont}
Consider a closed, convex cone $Q\subset\R^n$ with nonempty interior. Suppose that $\bd Q$ is contained in a proper linear subspace. Then $Q$ is either all of $\R^n$ or a half-space.
\end{corollary}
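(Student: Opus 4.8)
The plan is to argue by contradiction, using Proposition~\ref{prop:char} as the key tool: that boundary-representation result will let me trap the entire cone inside the proper subspace that is only supposed to contain its boundary. Concretely, suppose toward a contradiction that $Q$ is neither $\R^n$ nor a half-space. Then $Q$ satisfies exactly the hypotheses of Proposition~\ref{prop:char}, so I obtain the representation
$$Q=\cone(\bd Q).$$

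Next I would exploit the standing assumption that $\bd Q\subset L$ for some proper linear subspace $L\subsetneq\R^n$. The observation I would make here is that $L$ is itself a convex cone, being closed under addition and nonnegative scaling. Since $\cone$ denotes the smallest convex cone containing a given set, and since $L$ is such a cone containing $\bd Q$, it follows that $\cone(\bd Q)\subseteq L$. Combining this with the previous display yields $Q=\cone(\bd Q)\subseteq L$. The contradiction is then immediate: a proper linear subspace has empty interior in $\R^n$, whereas $Q$ is assumed to have nonempty interior, and this is incompatible with $Q\subseteq L$. Hence the contradictory assumption fails, and $Q$ must be either $\R^n$ or a half-space.

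I do not expect a genuine obstacle here; the only substantive move is recognizing that Proposition~\ref{prop:char} does all the real work, forcing $Q$ into $L$ the moment one rules out the two trivial cases. The supporting facts — that a subspace is a convex cone, so the convex conical hull cannot escape it, and that a proper subspace cannot engulf a set with nonempty interior — are routine. If anything requires care, it is simply checking that the hypotheses of Proposition~\ref{prop:char} (closed convex cone, not $\R^n$, not a half-space) are precisely what the contradictory assumption supplies, which they are.
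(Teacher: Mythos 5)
Your proof is correct and follows exactly the paper's argument: contradiction via Proposition~\ref{prop:char}, observing that $\cone(\bd Q)$ then lies in the proper subspace and so has empty interior, contradicting the nonempty-interior hypothesis. The paper compresses this to one line, but the substance is identical.
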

\begin{proof} Clearly if $Q$ were neither $\R^n$ or a half-space, then by Proposition~\ref{prop:char}, we would deduce that $Q=\cone (\bd Q)$ has empty interior, which is a contradiction.
\end{proof}

Armed with Proposition~\ref{prop:char} and Lemma~\ref{lem:access_set_main}, we can now prove the main result of this section with ease.

\begin{theorem}[Characterization]\label{thm:main}
Consider a proper, stratifiable function $f\colon\R^n\to\overline{\R}$ that is finite at $\bar{x}$. Suppose that $f$ is vertically continuous and directionally Lipschitzian at all points of $\dom f$ near $\bar{x}$. Then for any dense subset $\Omega\subset\dom f$, we have 
\begin{equation}\label{eqn:norm}
N^{c}_{\sepi f}(\bar{x},f(\bar{x}))=\cone \big\{\lim_{i\to\infty} \frac{(\nabla f(x_i),-1)}{\sqrt{1+|\nabla f(x_i)|}} : x_i\xrightarrow[f]{\Omega} 
\bar{x}\big\}+\big(N^{c}_{\sdom f}(\bar{x})\times \{0\}\big).    
\end{equation}
\noindent Consequently, the Clarke subdifferential admits the presentation
$$\partial_c f(\bar{x})=\conv\big\{\lim_{i\to\infty} \nabla f(x_i):x_i\xrightarrow[f]{\Omega}\bar{x}\big\} + \cone\big\{\lim_{\substack{i\to\infty\\ t_i \downarrow 0}}t_i\nabla f(x_i):x_i\xrightarrow[f]{\Omega} \bar{x}\big\}+ N^{c}_{\sdom f}(\bar{x}).$$
\end{theorem}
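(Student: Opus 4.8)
The plan is to establish the normal-cone identity \eqref{eqn:norm}, i.e.\ $N^{c}_{\sepi f}(p)=K$ where $p=(\bar x,f(\bar x))$, $Q=\epi f$,
\[
G:=\Big\{\lim_{i\to\infty}\tfrac{(\nabla f(x_i),-1)}{\sqrt{1+|\nabla f(x_i)|}}:x_i\xrightarrow[f]{\Omega}\bar x\Big\},\qquad K:=\cone G+\big(N^{c}_{\sdom f}(\bar x)\times\{0\}\big),
\]
and then to read off the subdifferential formula by slicing $N^{c}_{\sepi f}(p)$ with the hyperplane $\{(\cdot,-1)\}$. Throughout, directional Lipschitzness forces $N_{\sepi f}(p)$, hence $N^{c}_{\sepi f}(p)$, to be pointed; in particular $N^{c}_{\sepi f}(p)$ is neither $\R^{n+1}$ nor a half-space, so Proposition~\ref{prop:char} applies to it and to the pointed Fréchet cones encountered below, while Lemma~\ref{lem:outer} keeps the conical hulls I form closed and outer-semicontinuous.

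\emph{The inclusion $K\subseteq N^{c}_{\sepi f}(p)$.} By Proposition~\ref{prop:cl_int} the set $\Omega\cap\dom\nabla f$ is $f$-attentively dense in $\dom f$ near $\bar x$, and wherever $f$ is differentiable the vector $(\nabla f(x),-1)$ is a proximal normal to $Q$ at $(x,f(x))$. Consequently each generator of $\cone G$ is a limit of (positive multiples of) proximal normals and therefore lies in $N_{\sepi f}(p)$. For the second summand, vertical continuity \eqref{eqn:strange} identifies $N_{\sdom f}(\bar x)\times\{0\}$ as an outer limit of normals to $Q$ at points strictly above the graph, whence $N_{\sdom f}(\bar x)\times\{0\}\subseteq N_{\sepi f}(p)$. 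Since $N^{c}_{\sepi f}(p)$ is a closed convex cone, $K\subseteq N^{c}_{\sepi f}(p)$.

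\emph{The inclusion $N^{c}_{\sepi f}(p)\subseteq K$.} As $K$ is a closed convex cone, it suffices to prove $N_{\sepi f}(p)\subseteq K$ and then convexify. Let $w=\lim_i v_i$ with $v_i\in N^{P}_Q(q_i)$ and $q_i=(x_i,r_i)\to p$. Passing to a subsequence, either all $r_i>f(x_i)$, in which case vertical continuity forces $w\in N_{\sdom f}(\bar x)\times\{0\}\subseteq K$, or all $q_i$ lie on the graph. The latter case is the heart of the matter. Stratify $Q$ by splitting each stratum $M_i$ of $\dom f$ into a graph stratum $S_i=\{(x,f(x)):x\in M_i\}$ and a vertical stratum $\{(x,r):x\in M_i,\ r>f(x)\}$. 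At a graph point $(x,f(x))\in S_i$ one has $\hat N_Q(x,f(x))\subseteq N_{S_i}(x,f(x))$, and by pointedness together with Proposition~\ref{prop:char} (applied within the linear space $N_{S_i}(x,f(x))$) this Fréchet cone equals the convex conical hull of its relative boundary. By Lemma~\ref{lem:access_set_main}, every relative-boundary direction is a limit of proximal normals at points off $S_i$, i.e.\ on strictly higher strata (in the order $\preceq$) or strictly above the graph. Inducting downward on $\preceq$, the base cases are the top-dimensional graph strata, where $\hat N_Q(x,f(x))=\cone\{(\nabla f(x),-1)\}$ feeds $\cone G$, and the strictly-above points, which feed $N_{\sdom f}(\bar x)\times\{0\}$ via vertical continuity. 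Hence every on-graph proximal-normal limit lies in $\cl\cone G+\big(N_{\sdom f}(\bar x)\times\{0\}\big)\subseteq K$, which establishes $N_{\sepi f}(p)\subseteq K$ and thus \eqref{eqn:norm}.

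\emph{Main obstacle and the subdifferential formula.} The principal difficulty is the downward induction in the on-graph case: controlling the proximal normals at a low-dimensional graph stratum by \emph{pushing} them, via Lemma~\ref{lem:access_set_main}, onto higher strata, reconstructing each Fréchet cone from its relative boundary through Proposition~\ref{prop:char}, and keeping the conical hulls closed by Lemma~\ref{lem:outer}; vertical continuity is exactly what collapses the vertical strata to $N_{\sdom f}(\bar x)\times\{0\}$ and rules out spurious horizontal normals. Finally, to pass from \eqref{eqn:norm} to the stated formula, intersect $N^{c}_{\sepi f}(p)$ with the hyperplane of vectors whose last coordinate is $-1$. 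The horizontal summand $N^{c}_{\sdom f}(\bar x)\times\{0\}$ cannot supply the $-1$, so it must arise from the $G$-part; separating the generators of $G$ with strictly negative last coordinate (bounded gradients, which after rescaling yield $\conv\{\lim_i\nabla f(x_i)\}$) from those with vanishing last coordinate (gradients tending to infinity, which yield $\cone\{\lim_{i,\,t_i\downarrow0}t_i\nabla f(x_i)\}$), exactly as in the bookkeeping of Proposition~\ref{prop:iso}, produces the claimed presentation of $\partial_c f(\bar x)$.
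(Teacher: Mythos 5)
Your proposal is correct and follows essentially the same route as the paper's own proof: both first establish the normal-cone identity (the easy inclusion from gradients-as-Fr\'{e}chet-normals plus vertical continuity; the hard inclusion by induction over the stratification's partial order $\preceq$, using Lemma~\ref{lem:access_set_main} to push relative-boundary Fr\'{e}chet normals off each graph stratum, Proposition~\ref{prop:char} to rebuild the pointed Fr\'{e}chet cone from that boundary, vertical continuity to absorb the above-graph escapes into $N_{\sdom f}(\bar{x})\times\{0\}$, and Lemma~\ref{lem:outer} for closedness and $f$-attentive outer-semicontinuity of the candidate cone), and then recover the subdifferential formula by slicing at last coordinate $-1$ with the same $E$/$H$ bookkeeping. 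The only cosmetic difference is that you stratify the epigraph explicitly into graph and vertical strata, whereas the paper keeps the stratification on $\dom f$, packages the candidate cone as an outer-semicontinuous set-valued map $F$, and handles above-graph points through the dichotomy $r_l\neq f(x_l)$ versus $r_l=f(x_l)$.
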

\begin{proof} 
%By restricting the domain of $f$, we may suppose that the horizon subdifferential $\partial^{\infty} f(x)$ is pointed at each $x\in\dom f$. 
We first prove (\ref{eqn:norm}). Observe that since $f$ is vertically continuous at $\bar{x}$, we have $N^{c}_{\sdom f}(\bar{x})\times \{0\}\subset N^{c}_{\sepi f}(\bar{x},f(\bar{x}))$, and hence the inclusion ``$\supset$'' holds.
Therefore we must establish the reverse inclusion. To this effect, intersecting the domain of $f$ with a small open ball around $\bar{x}$, we may assume that $f$ is directionally Lipschitzian and vertically continuous at each point $x\in \dom f$. For notational convenience, for a vector $v\in\R^{n}$, let $\overbracket[1pt]{v}:=\frac{(v,-1)}{\sqrt{1+|v|^2}}$. Define the set-valued mapping
\begin{equation}
F(x):= \cone \Big(\{\lim_{i\to\infty} \overbracket[1pt]{\nabla f(x_i)} : x_i\xrightarrow[f]{\Omega} x\}\cup (N_{\sdom f}(x)\cap {\bf B})\times\{0\}\Big),\label{eqn:set_val}
\end{equation}
By Proposition~\ref{prop:cl_int}, the set $\{\lim_{i\to\infty} \overbracket[1pt]{\nabla f(x_i)} : x_i\xrightarrow[f]{\Omega} x\}$ is nonempty. Furthermore, from the established inclusion ``$\supset$'', we see that $N_{\sdom f}(x)$ is pointed and hence the set
$\cone N_{\sdom f}(x)$ is closed for all $x\in\dom f$. Consequently, we deduce
\begin{equation*}
F(x)=\cone \{\lim_{i\to\infty} \overbracket[1pt]{\nabla f(x_i)}: x_i\xrightarrow[f]{\Omega} x\}+ \big(N^{c}_{\sdom f}(x)\times \{0\}\big),
\end{equation*}
Combining (\ref{eqn:set_val}) with Lemma~\ref{lem:outer}, we see that $F$ is outer-semicontinuous with respect to $f$-attentive convergence. 

Now consider a stratification of $\dom f$ into manifolds $\{M_i\}$ having the property that $f$ is smooth on each stratum $M_i$. Restricting the domain of $f$, we may assume that the stratification $\{M_i\}$ consists of only finitely many sets. We prove the theorem by induction on the dimension of the strata $M_i$ in which the point $\bar{x}$ lies. 

Clearly, the result holds for all strata of dimension $n$, since $f$ is smooth on such strata and $\Omega$ is dense in $\dom f$. As an inductive hypothesis, suppose that the claim holds for all strata that are strictly greater in the partial order $\preceq$ than a certain stratum $M$ and let $\bar{x}$ be an arbitrary point of $M$. 

Since $f$ is smooth on $M$, we deduce that $\gph f\big|_M$ is a smooth manifold. Then by Lemma~\ref{lem:access_set_main}, for every vector $0\neq v\in\rb \hat{N}_{\sepi f}(\bar{x},f(\bar{x}))$, there exists a sequence $(x_l,r_l,v_l)\to(\bar{x},f(\bar{x}),v)$, with $v_l\in \hat{N}_{\sepi f}(x_l,r_l)$ and $(x_l,r_l)\notin \gph f\big|_M$. Suppose that there exists a subsequence satisfying $r_l\neq f(x_l)$ for each index $l$. Then since $f$ is vertically continuous at $\bar{x}$, we obtain 
$$v=\lim_{l\to\infty} v_l\subset \limsupp_{l\to\infty}N_{\sepi f}(x_l,r_l)\subset N_{\sdom f}(\bar{x})\times \{0\}\subset F(\bar{x}).$$

On the other hand, if $r_l= f(x_l)$ for all large indices $i$, then restricting to a subsequence, we may assume that all the points $x_l$ lie in a a stratum $M'$ with $M'\succ M$. The inductive hypothesis and $f$-attentive outer-semicontinuity of $F$ yield the inclusion $v\in F(\bar{x})$.

Thus we have established the inclusion,
$$\rb \hat{N}_{\sepi f}(\bar{x},f(\bar{x}))\subset F(\bar{x}).$$
Since $\partial^{\infty} f (\bar{x})$ is pointed, we deduce that the cone $\hat{N}_{\sepi f}(\bar{x})$ is neither a linear subspace nor a half-subspace. Consequently by Lemma~\ref{lem:outer}, we deduce
\begin{equation}\label{eq:point}
\hat{N}_{\sepi f}(\bar{x},f(\bar{x}))=\cone\rb \hat{N}_{\sepi f}(\bar{x},f(\bar{x}))\subset F(\bar{x}).
\end{equation}
In fact, we have shown that (\ref{eq:point}) holds for all points $\bar{x}\in M$.

Finally consider a limiting normal $v\in N_{\sepi f}(\bar{x})$. Then there exists a sequence $(x_l,r_l,v_l)\to (\bar{x},f(\bar{x}),v)$, with $v_l\in  \hat{N}_{\sepi f}(x_l,r_l)$. It follows from (\ref{eq:point}), the inductive hypothesis, and $f$-attentive outer-semicontinuity of $F$ that the inclusion $v\in F(\bar{x})$ holds. Thus the induction is complete, as is the proof of (\ref{eqn:norm}).

To finish the proof of the theorem, define the two sets
\begin{equation}
E:=\{\lim_{i\to\infty} \nabla f(x_i):x_i\xrightarrow[f]{\Omega}\bar{x}\},~~~~
H:= \{\lim_{\substack{i\to\infty\\ t_i \downarrow 0}}t_i\nabla f(x_i):x_i\xrightarrow[f]{\Omega} \bar{x}\}.
\end{equation}
Observe 
$$\cone \{\lim_{i\to\infty} \overbracket[1pt]{\nabla f(x_i)} : x_i\xrightarrow[f]{\Omega} \bar{x}\}= \cone\big(E\times\{-1\}\big)\cup \big(H\times\{0\}\big).$$

Thus an inclusion $(v,-1)\in N^c_Q(\bar{x},f(\bar{x}))$ holds if and only if there exist vectors $y_i\in E\cup H$ and $y\in N^{c}_{\sdom f}(\bar{x})$, and real numbers $\lambda_i> 0$, for $1\leq i \leq  n+1$, satisfying
\begin{align*}
v&=\sum_{i:y_i\in E} \lambda_i y_i+ \sum_{i:y_i\in H} \lambda_i y_i +y,\\
1&=\sum_{i: y_i\in E} \lambda_i
\end{align*}
The result follows.
\end{proof}

Recovering representation (\ref{eqn:lewis}) of the introduction, in the setting of stratifiable functions, is now an easy task.
\begin{corollary}\label{cor:cite}
Consider a proper, stratifiable function $f\colon\R^n\to\overline{\R}$, that is finite at $\bar{x}$. Suppose that $f$ is directionally Lipschitzian at all points of $\dom f$ near $\bar{x}$, and is continuous near $\bar{x}$ relative to the domain of $f$. Then we have 
$$\partial_c f(\bar{x})=\bigcap_{\delta >0} \cl\conv\Big(\nabla f\big(\Omega\cap B_\delta(\bar{x})\big)\Big)+ N^{c}_{\sdom f}(\bar{x}),$$
where $\Omega$ is any dense subset of $\dom f$.
\end{corollary}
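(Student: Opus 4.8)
The plan is to combine Theorem~\ref{thm:main} with an elementary set identity relating the iterated intersection of closed convex hulls of gradients to the two generating sets in that theorem. First come the reductions. Since $f$ is continuous near $\bar{x}$ relative to $\dom f$, Proposition~\ref{prop:norm}(\ref{it:2}) guarantees that $f$ is vertically continuous at every point of $\dom f$ near $\bar{x}$; as $f$ is also directionally Lipschitzian there, Theorem~\ref{thm:main} applies and gives $\partial_c f(\bar{x})=\conv E+\cone H+N^{c}_{\sdom f}(\bar{x})$, where $E:=\{\lim_i \nabla f(x_i): x_i\xrightarrow[f]{\Omega}\bar{x}\}$ and $H:=\{\lim_i t_i\nabla f(x_i): x_i\xrightarrow[f]{\Omega}\bar{x},\ t_i\downarrow 0\}$. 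Continuity of $f$ relative to $\dom f$ forces $f(x_i)\to f(\bar{x})$ whenever $x_i\to\bar{x}$ with $x_i\in\Omega\subseteq\dom f$, so $f$-attentive convergence along $\Omega$ coincides with ordinary convergence, and $E,H$ may be described using ordinary limits. Writing $S_\delta:=\nabla f(\Omega\cap B_\delta(\bar{x}))$, $G_\delta:=\cl\conv S_\delta$, and $G:=\bigcap_{\delta>0}G_\delta$, it then suffices to establish the identity $G=\conv E+\cone H$, since the corollary follows immediately: $\partial_c f(\bar{x})=\conv E+\cone H+N^{c}_{\sdom f}(\bar{x})=G+N^{c}_{\sdom f}(\bar{x})$.

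The inclusion $\conv E+\cone H\subseteq G$ is routine. Any $v\in E$ is a limit $\nabla f(x_i)\to v$ with $x_i\to\bar{x}$, so for each fixed $\delta$ we have $\nabla f(x_i)\in S_\delta$ for all large $i$; hence $v\in\cl S_\delta\subseteq G_\delta$ for every $\delta$, giving $E\subseteq G$ and then $\conv E\subseteq G$ by convexity and closedness of $G$. Likewise any $h\in H$ arises as $t_i\nabla f(x_i)\to h$ with $t_i\downarrow 0$ and $\nabla f(x_i)\in S_\delta$ eventually, which exhibits $h$ as a horizon direction of $S_\delta$ and therefore a recession direction of $G_\delta$ for every $\delta$; since the $G_\delta$ are nested, $h$ is a recession direction of $G$, so $\cone H\subseteq\mathrm{rec}\, G$. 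Consequently $\conv E+\cone H\subseteq G+\mathrm{rec}\, G=G$.

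The reverse inclusion $G\subseteq\conv E+\cone H$ is the crux, and the place where I expect the real work. Fix $w\in G$. For each $m$, since $w\in\cl\conv S_{1/m}$, Carath\'eodory's theorem produces $w_m=\sum_{j=0}^{n}\lambda^m_j\nabla f(x^m_j)$ with $x^m_j\in\Omega\cap B_{1/m}(\bar{x})$, $\lambda^m_j\ge 0$, $\sum_j\lambda^m_j=1$, and $w_m\to w$. Passing to subsequences, I may assume $\lambda^m_j\to\mu_j$ for each $j$ and that $\{0,\dots,n\}$ splits as $B\sqcup U$, where $\nabla f(x^m_j)$ stays bounded for $j\in B$ and $|\nabla f(x^m_j)|\to\infty$ for $j\in U$. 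For $j\in B$, refine so that $\nabla f(x^m_j)\to e_j$; since $x^m_j\to\bar{x}$ this yields $e_j\in E$. For $j\in U$, set $u^m_j:=\nabla f(x^m_j)/|\nabla f(x^m_j)|$ and refine so that $u^m_j\to u_j$; then $u_j\in H$, because $u^m_j=t^m_j\nabla f(x^m_j)$ with $t^m_j:=1/|\nabla f(x^m_j)|\downarrow 0$ and $x^m_j\to\bar{x}$. The essential point is to bound the coefficients $\alpha^m_j:=\lambda^m_j|\nabla f(x^m_j)|$ for $j\in U$, and this is exactly where directional Lipschitzness enters. I would use that $H$ is contained in the pointed cone $\{v:(v,0)\in N^{c}_{\sepi f}(\bar{x},f(\bar{x}))\}$: each $(\nabla f(x^m_j),-1)$ is a Clarke normal to $\epi f$ at a point of differentiability, and the Clarke normal cone map of the epi-Lipschitzian set $\epi f$ is outer-semicontinuous, so each $u_j$ has $(u_j,0)\in N^{c}_{\sepi f}(\bar{x},f(\bar{x}))$; since that cone is pointed, $\cone H$ is pointed. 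If the $\alpha^m_j$ were unbounded, then normalizing by their maximum and passing to the limit would produce a nontrivial relation $\sum_{j\in U}\beta_j u_j=0$ with $\beta_j\ge 0$ not all zero, contradicting pointedness --- precisely the normalization argument in the proof of Lemma~\ref{lem:outer}.

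Hence the $\alpha^m_j$ are bounded; refining once more, $\alpha^m_j\to\alpha_j\ge 0$, so that $\sum_{j\in U}\lambda^m_j\nabla f(x^m_j)=\sum_{j\in U}\alpha^m_j u^m_j\to\sum_{j\in U}\alpha_j u_j\in\cone H$. Boundedness of the $\alpha^m_j$ together with $|\nabla f(x^m_j)|\to\infty$ forces $\lambda^m_j\to 0$, i.e.\ $\mu_j=0$ for $j\in U$; thus $\sum_{j\in B}\mu_j=1$ and $\sum_{j\in B}\lambda^m_j\nabla f(x^m_j)\to\sum_{j\in B}\mu_j e_j\in\conv E$. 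Passing to the limit in $w_m$ then writes $w$ as an element of $\conv E$ plus an element of $\cone H$, which proves $G\subseteq\conv E+\cone H$ and hence the identity $G=\conv E+\cone H$. The main obstacle throughout is this last inclusion, and within it the exclusion of cancellation among the blow-up gradient directions; everything hinges on the pointedness of $\cone H$ supplied by the directionally Lipschitzian hypothesis, reused through the mechanism of Lemma~\ref{lem:outer}.
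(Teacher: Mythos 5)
Your proposal is correct and takes essentially the same route as the paper: reduce to Theorem~\ref{thm:main} via Proposition~\ref{prop:norm} (item~\ref{it:2}), then identify $\bigcap_{\delta>0}\cl\conv\big(\nabla f(\Omega\cap B_\delta(\bar{x}))\big)$ with the limit sets through a Carath\'eodory--normalization argument whose multiplier bound comes from pointedness --- precisely the Lemma~\ref{lem:outer} mechanism the paper invokes. The only difference is presentational: the paper states the equivalent identity upstairs, for the lifted cone $\cl\cone\big\{(\nabla f(x),-1)/\sqrt{1+|\nabla f(x)|}\big\}$, whose generators are automatically bounded, and leaves the verification to the reader, whereas you run the same argument downstairs in $\R^n$, which forces the explicit bounded/blow-up split of the gradients that the lifting avoids.
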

\begin{proof}
Since the cone $N^c_{\sepi f}(\bar{x},f(\bar{x}))$ is pointed, one can easily verify, much along the lines of Lemma~\ref{lem:outer}, that the equation
$$\bigcap_{\delta >0} 
\cl\cone\big\{\frac{(\nabla f(x),-1)}
{\sqrt{1+|\nabla f(x)|}}: x\in \Omega\cap B_{\delta}(\bar{x})\big\}=\cone \big\{\lim_{i\to\infty} \frac{(\nabla f(x_i),-1)}{\sqrt{1+|\nabla f(x_i)|}} : x_i\stackrel{\Omega}{\rightarrow} \bar{x}\big\},
$$
holds.
The result follows by an application of Theorem~\ref{thm:main}. We leave the details to the reader.  
\end{proof}

%Recovering the representation formula of Rockafellar (formula (\ref{eqn:rock}) of the introduction) is now an easy task. 

%\begin{corollary}\label{cor:conv_roc}
%Consider a proper, stratifiable, convex function $f\colon\R^n\to\overline{\R}$, that is locally lower-semicontinuous and finite at a point $\bar{x}$, and whose domain has nonempty interior. Let $\Omega$ be any dense subset of $\dom f$. Then the convex subdifferential admits the presentation
%$$\partial f(\bar{x})=\conv\big\{\lim_{i\to\infty} \nabla f(x_i):x_i\stackrel{\Omega}{\rightarrow}\bar{x}\big\} + N_{\sdom f}(\bar{x}).$$
%\end{corollary}
%\begin{proof} Observe  
%$$\cone\big\{\lim_{\substack{i\to\infty\\ t_i \downarrow 0}}t_i\nabla f(x_i):x_i\xrightarrow[f]{\Omega} \bar{x}\big\}\subset \partial^{\infty} f(\bar{x})=N_{\sdom f}(\bar{x}),$$ 
%where the last equality follows from \cite[Proposition 8.12]{VA}, for example. 
%Since by Proposition~\ref{prop:norm}, convex functions are vertically continuous, applying Theorem~\ref{thm:main}, we deduce
%$$\partial f(\bar{x})=\conv\big\{\lim_{i\to\infty} \nabla f(x_i):x_i\xrightarrow[f]{\Omega}\bar{x}\big\} + N^{c}_{\sdom f}(\bar{x}).$$ It is a standard fact that the subdifferential map $\partial f$ of a lower-semicontinuous convex function is outer-semicontinuous. Hence, the result follows.
%\end{proof}

Our next goal is to recover the representation of the convex subdifferential (\ref{eqn:rock}) of the introduction in the stratifiable setting. In fact, we will consider the more general case of amenable functions.
Before proceeding, recall that a proper, lower-semicontinuous, convex function is directionally Lipschitzian at some point if and only if its domain has nonempty interior. A completely analogous situation occurs for amenable functions.

\begin{lemma}\label{lem:amen_dir}
Consider a function $f\colon\R^n\to\overline{\R}$ that is amenable at $\bar{x}$. 
Let $V$ be a neighborhood of $\bar{x}$ so that $f$ can be written as a composition $f=g\circ F$, for a ${\bf C}^1$ mapping $F\colon V\to\R^m$ and a proper, lower-semicontinuous, convex function $g\colon\R^m\to\overline{\R}$, so that the qualification condition 
\begin{equation*}
N_{\sdom g}(F(\bar{x}))\cap \kernal \nabla F(\bar{x})^{*}=\{0\},
\end{equation*}
is satisfied.
Then there exists a neighborhood $U$ of $\bar{x}$ so that 
\begin{enumerate}
\item \label{item:int} $F(U\cap\inter\dom f)\subset \inter\dom g$,
\item \label{item:bd} $U\cap F^{-1}(\inter\dom g) \subset \inter \dom f$.
\end{enumerate}
%Furthermore if $\bar{x}$ lies in the closure of the interior of $\dom f$,
%then there exist neighborhoods $U$ of $\bar{x}$ and $V$ of $F(\bar{x})$ satisfying
%\begin{enumerate}
%\item $F(U\cap\inter\dom f)= V\cap\inter\dom g$,
%\item $F(U\cap\bd\dom f)= V\cap\bd\dom g$.
%\end{enumerate}
Furthermore $f$ is directionally Lipschitzian at $\bar{x}$ if and only if $\bar{x}$ lies in $\cl( \inter \dom f)$.
\end{lemma}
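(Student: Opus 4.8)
The plan is to route the equivalence through the horizon subdifferential and the first-order behaviour of $F$. Since $g$ is lower-semicontinuous and $F$ is continuous, the composite $f=g\circ F$ is locally lower-semicontinuous at $\bar{x}$, so by definition $f$ is directionally Lipschitzian at $\bar{x}$ precisely when the cone $\partial^{\infty}f(\bar{x})$ is pointed. First I would record the exact value of this cone. The chain-rule computation already carried out in the proof of Proposition~\ref{prop:norm}\,\ref{it:4} gives $N_{\sepi f}(\bar{x},f(\bar{x}))=\nabla G(\bar{x},f(\bar{x}))^{*}N_{\sepi g}(G(\bar{x},f(\bar{x})))$ with $G(x,r)=(F(x),r)$; extracting the horizontal slice and using $\partial^{\infty}g=N_{\sdom g}$ for the convex function $g$ yields
$$\partial^{\infty}f(\bar{x})=\nabla F(\bar{x})^{*}\,N_{\sdom g}(F(\bar{x})).$$

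Next I would prove a small linear-algebraic lemma: writing $A:=\nabla F(\bar{x})$ and $K:=N_{\sdom g}(F(\bar{x}))$, the qualification condition $K\cap\kernal A^{*}=\{0\}$ forces $A^{*}K$ to be pointed if and only if $K$ is pointed. Indeed, a nonzero line $w,-w\in A^{*}K$ would provide $u,v\in K$ with $A^{*}u=w=-A^{*}v$, whence $u+v\in K\cap\kernal A^{*}=\{0\}$ gives $v=-u$, so $u,-u\in K$ with $A^{*}u\neq 0$; conversely a line $u,-u\in K$ maps to $A^{*}u,-A^{*}u\in A^{*}K$, which are nonzero since $u\notin\kernal A^{*}$ by qualification. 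Combined with the displayed formula, $f$ is directionally Lipschitzian at $\bar{x}$ if and only if $K=\partial^{\infty}g(F(\bar{x}))$ is pointed, which by the recalled convex fact (a proper lsc convex function is directionally Lipschitzian at a point exactly when its domain has nonempty interior) happens precisely when $\inter\dom g\neq\emptyset$.

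It then remains to show $\inter\dom g\neq\emptyset$ if and only if $\bar{x}\in\cl(\inter\dom f)$, and here I would lean on parts \ref{item:int} and \ref{item:bd}. One direction is immediate: if $x_k\to\bar{x}$ with $x_k\in\inter\dom f$, then part \ref{item:int} gives $F(x_k)\in\inter\dom g$, so $\inter\dom g\neq\emptyset$. The reverse direction is the crux. Assuming $\inter\dom g\neq\emptyset$, I would first argue that the affine subspace $F(\bar{x})+\rge A$ meets $\inter\dom g$: if it did not, separating this affine subspace from the open convex set $\inter\dom g$ would produce a functional $v\neq 0$ bounded below on $F(\bar{x})+\rge A$, hence constant there and so orthogonal to $\rge A$, i.e.\ $v\in\kernal A^{*}$; the same functional supports $\dom g$ at $F(\bar{x})$ (using $F(\bar{x})\in\dom g=\cl\inter\dom g$), i.e.\ $v\in K$. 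This nonzero vector in $K\cap\kernal A^{*}$ would contradict the qualification condition, so I may pick $\hat{y}=F(\bar{x})+Ah_{0}\in\inter\dom g$ for some $h_{0}\in\R^{n}$.

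Finally I would pull this point back through $F$. Using the ${\bf C}^{1}$ expansion $F(\bar{x}+th_{0})=F(\bar{x})+tAh_{0}+o(t)=(1-t)F(\bar{x})+t\hat{y}+o(t)$ together with a convexity margin linear in $t$ --- since $\hat{y}\in\inter\dom g$ some ball $B_{\rho}(\hat{y})$ lies in $\inter\dom g$, and $(1-t)F(\bar{x})+t\,B_{\rho}(\hat{y})=B_{t\rho}\big((1-t)F(\bar{x})+t\hat{y}\big)\subset\inter\dom g$ for $t\in(0,1]$ --- the $o(t)$ remainder is eventually dominated by the radius $t\rho$, so $F(\bar{x}+th_{0})\in\inter\dom g$ for all small $t>0$. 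Part \ref{item:bd} then places $\bar{x}+th_{0}\in\inter\dom f$, and letting $t\downarrow 0$ gives $\bar{x}\in\cl(\inter\dom f)$. I expect this last construction to be the main obstacle: the separation step must deliver an interior-reaching direction that lies inside $\rge A$, which is exactly what the qualification condition buys through polarity, and the convexity margin must be quantified linearly in $t$ so that it can absorb the first-order remainder.
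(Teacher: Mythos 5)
Your treatment of the ``furthermore'' equivalence is correct and follows essentially the same route as the paper's own proof: identify $\partial^{\infty}f(\bar{x})=\nabla F(\bar{x})^{*}N_{\sdom g}(F(\bar{x}))$, transfer pointedness across $\nabla F(\bar{x})^{*}$ via the qualification condition, reduce everything to $\inter\dom g\neq\emptyset$, and then pass between $\inter\dom g\neq\emptyset$ and $\bar{x}\in\cl(\inter\dom f)$ using items \ref{item:int} and \ref{item:bd}. At the crux you are actually more self-contained than the paper: where the paper asserts that a neighborhood $U$ with $F(U)\cap\inter\dom g=\emptyset$ would allow one to separate the range of the linearized map $w\mapsto F(\bar{x})+\nabla F(\bar{x})w$ from $\dom g$ (deferring the details to \cite[Theorem 10.6]{VA}), you prove the contrapositive directly: separation of $F(\bar{x})+\rge \nabla F(\bar{x})$ from $\inter\dom g$ produces a nonzero vector in $N_{\sdom g}(F(\bar{x}))\cap\kernal\nabla F(\bar{x})^{*}$, and the linear-in-$t$ convexity margin absorbs the $o(t)$ remainder of the ${\bf C}^{1}$ expansion. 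That argument is sound. (One nit: $\dom g=\cl\inter\dom g$ is false in general; what you need, and what your argument actually uses, is the inclusion $\dom g\subset\cl\inter\dom g$, valid once $\inter\dom g\neq\emptyset$.)

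The genuine gap is that items \ref{item:int} and \ref{item:bd} are conclusions of the lemma, not hypotheses, and your proof never establishes them --- it explicitly ``leans on'' them in both directions of the equivalence, so as written you have proved only the last sentence of the lemma, conditionally on the first two. Item \ref{item:bd} is nearly free: $V\cap F^{-1}(\inter\dom g)$ is open and $f=g\circ F$ is finite there, so it lies in $\inter\dom f$. Item \ref{item:int} is not free: if $x\in\inter\dom f$ is near $\bar{x}$ and $F(x)\notin\inter\dom g$, convex separation gives a nonzero $v\in N_{\sdom g}(F(x))$, and differentiating $\langle v,F(x+tw)-F(x)\rangle\le 0$ (valid because $F$ maps a full neighborhood of $x$ into $\dom g$) yields $\nabla F(x)^{*}v=0$; to reach a contradiction you need the qualification condition \emph{at the nearby point} $x$, whereas it is assumed only at $\bar{x}$. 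Propagating it requires an additional compactness argument: unit vectors $v_i\in N_{\sdom g}(F(x_i))\cap\kernal\nabla F(x_i)^{*}$ with $x_i\to\bar{x}$ in $\dom f$ converge along a subsequence to a nonzero vector of $N_{\sdom g}(F(\bar{x}))\cap\kernal\nabla F(\bar{x})^{*}$, by the defining inequality of the convex normal cone and continuity of $\nabla F$. The paper obtains exactly this localization, together with the formula $N_{\sdom f}(x)=\nabla F(x)^{*}N_{\sdom g}(F(x))$ for all $x\in U\cap\dom f$, from \cite[Theorem 3.3]{amen}, after which item \ref{item:int} follows in one line. Supplying these two proofs is what separates your proposal from a complete proof of the statement.
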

\begin{proof}
Let us first recall a few useful formulas. To this end, \cite[Theorem 3.3]{amen} shows that there exists a neighborhood $U$ of $\bar{x}$ so that 
for all points $x\in U\cap\dom f$, we have 
\begin{align}
\{0\}&=N_{\sdom g}(F(x))\cap \kernal \nabla F(x)^{*}, \label{align:prop} \\ 
\partial f(x)&=\nabla F(x)^{*}\partial g(F(x)),\\
N_{\sdom f}(x)&=\nabla F(x)^{*}N_{\sdom g}(F(x)). \label{align:prop2}
\end{align}
Furthermore a computation in the proof of Proposition~\ref{prop:norm} (item \ref{it:4}) shows that for any $x\in U\cap\dom f$ and any $r> f(x)$, we have 
\begin{equation}\label{eqn:epi_amin}
N_{\sepi f}(x,r)=\nabla F(x)^{*}N_{\sdom g}(F(x))\times \{0\}.
\end{equation}

Observe for any $x\in U\cap\inter\dom f$, we have  
$$0=N_{\sdom f}(x)=\nabla F(x)^{*}N_{\sdom g}(F(x)),$$
and consequently $N_{\sdom g}(F(x))=0$. We conclude $F(x)\in \inter\dom g$, thus establishing $\ref{item:int}$.

%Consequently for all $x\in U\cap \dom f$ and $r >f(x)$, we have 
%$$N_{\sepi f}(x,r)=\nabla F(x)^{*}N_{\sdom g}(F(x))\times \{0\}.$$

Now consider a point $x\in U\cap F^{-1}(\inter\dom g)$. 
%A computation in the proof of \ref{it:4} of Proposition~\ref{prop:norm} shows that for any $x\in\U\cap\dom f$ and any $r> f(x)$, we have $$N_{\sepi f}(x,r)=\nabla F(x)^{*}N_{\sdom g}(F(x))\times \{0\}.$$
Using $(\ref{eqn:epi_amin})$, we deduce $N_{\sepi f}(x,r)=0$ for any $r >f(x)$. Hence by \cite[Exercise 6.19]{VA}, we conclude $(x,r)\in\inter \epi f$ and consequently $x\in\inter \dom f$, thus establishing $\ref{item:bd}$.

By \cite[Exercise 10.25 (a)]{VA}, we have $\partial^{\infty} f(\bar{x})=N_{\sdom f}(\bar{x})$, and in light of (\ref{align:prop}) and (\ref{align:prop2}) one can readily verify that the cone $N_{\sdom f}(\bar{x})$ is pointed if and only if $N_{\sdom g}(F(\bar{x}))$ is pointed. 

Now suppose that $\bar{x}$ lies in $\cl(\inter \dom f)$. 
Then by $\ref{item:int}$ the domain $\dom g$ has nonempty interior and consequently $N_{\sdom g}(F(\bar{x}))$ is pointed, as is $N_{\sdom f}(\bar{x})$.

Conversely suppose that $f$ is directionally Lipschitzian at $\bar{x}$. Then $N_{\sdom g}(F(\bar{x}))$ is pointed, and consequently $\dom g$ has nonempty interior. Observe since $F$ is continuous, the set $F^{-1}(\inter \dom g)\subset\dom f$ is open. Hence it is sufficient to argue that this set contains $\bar{x}$ in its closure. 
Suppose this is not the case. Then there exists a neighborhood $U$ of $\bar{x}$ so that the image $F(U)$ does not intersect $\inter \dom g$. It follows that the range of the linearised mapping $w\mapsto F(\bar{x})+\nabla F(\bar{x})w$ can be separated from $\dom g$, thus contradicting $(\ref{align:prop})$. See \cite[Theorem 10.6]{VA} for a more detailed explanation of this latter assertion.
\end{proof}

We can now easily recover, in the stratifiable setting, representation (\ref{eqn:rock}) of the introduction. In fact, an entirely analogous formula holds more generally for amenable functions.
\begin{corollary}\label{cor:amen_rock}
Consider a proper stratifiable function $f\colon\R^n\to\overline{\R}$, that is amenable at a point $\bar{x}$, and so that $\bar{x}$ lies in the closure of the interior of $\dom f$. Let $\Omega$ be any dense subset of $\dom f$. Then the subdifferential admits the presentation
$$\partial f(\bar{x})=\conv\big\{\lim_{i\to\infty} \nabla f(x_i):x_i\stackrel{\Omega}{\rightarrow}\bar{x}\big\} + N_{\sdom f}(\bar{x}).$$
\end{corollary}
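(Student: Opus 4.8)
The plan is to apply Theorem~\ref{thm:main} to $f$ and then collapse the resulting expression using the special structure of amenable functions. First I would check the hypotheses of Theorem~\ref{thm:main}. Properness and stratifiability are assumed, and $f(\bar x)$ is finite since $f$ is amenable at $\bar x$. The qualification condition (\ref{eqn:qual}) is an open condition, so $f$ is amenable, and hence vertically continuous by Proposition~\ref{prop:norm}(\ref{it:4}), at every point of $\dom f$ near $\bar x$. It then remains to verify that $f$ is directionally Lipschitzian at all nearby domain points. Since $\bar x\in\cl(\inter\dom f)$, Lemma~\ref{lem:amen_dir} shows $f$ is directionally Lipschitzian at $\bar x$, so $N_{\sdom g}(F(\bar x))$ is pointed and therefore $\dom g$ has nonempty interior. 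A convex set with nonempty interior has a pointed normal cone at each of its points, so $N_{\sdom g}(F(x))$ is pointed for every nearby $x\in\dom f$; invoking the pointedness equivalence in Lemma~\ref{lem:amen_dir} once more yields directional Lipschitzness at all such $x$.

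With the hypotheses in place, Theorem~\ref{thm:main} gives, in the notation
\[
E:=\{\lim_{i\to\infty}\nabla f(x_i):x_i\xrightarrow[f]{\Omega}\bar x\},\qquad H:=\{\lim_{\substack{i\to\infty\\ t_i\downarrow 0}}t_i\nabla f(x_i):x_i\xrightarrow[f]{\Omega}\bar x\},
\]
the identity $\partial_c f(\bar x)=\conv E+\cone H+N^c_{\sdom f}(\bar x)$, together with the normal-cone formula (\ref{eqn:norm}). I would now use that amenable functions are subdifferentially regular \cite{amen,VA}, so that $N_{\sepi f}=N^c_{\sepi f}$ at $(\bar x,f(\bar x))$; consequently $\partial_c f(\bar x)=\partial f(\bar x)$, and the horizon slice of (\ref{eqn:norm}) — the vectors whose last coordinate vanishes — equals $\cone H+N^c_{\sdom f}(\bar x)=\partial^{\infty} f(\bar x)$. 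By \cite[Exercise 10.25]{VA} (as recorded in the proof of Lemma~\ref{lem:amen_dir}), $\partial^{\infty} f(\bar x)=N_{\sdom f}(\bar x)$. Hence the entire horizon term, including the Clarke convexification of the normal cone, collapses to $N_{\sdom f}(\bar x)$, and we obtain $\partial f(\bar x)=\partial_c f(\bar x)=\conv E+N_{\sdom f}(\bar x)$.

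The last step is to replace the $f$-attentive convergence defining $E$ by ordinary convergence, that is, to show $\conv E+N_{\sdom f}(\bar x)=\conv E_{\mathrm{ord}}+N_{\sdom f}(\bar x)$, where $E_{\mathrm{ord}}$ uses $x_i\stackrel{\Omega}{\to}\bar x$. One inclusion is immediate from $E\subseteq E_{\mathrm{ord}}$. For the reverse, take any $v=\lim\nabla f(x_i)$ with $x_i\stackrel{\Omega}{\to}\bar x$; each $x_i$ lies in $\inter\dom f$, so $\nabla f(x_i)=\nabla F(x_i)^{*}\nabla g(F(x_i))$ with $\nabla g(F(x_i))\in\partial g(F(x_i))$. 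If the vectors $\nabla g(F(x_i))$ were unbounded, then after normalizing they would converge along a subsequence to a unit vector in $N_{\sdom g}(F(\bar x))\cap\kernal\nabla F(\bar x)^{*}$, contradicting (\ref{eqn:qual}); so they are bounded, and closedness of the graph of the convex subdifferential $\partial g$ forces a subsequential limit $w\in\partial g(F(\bar x))$ with $v=\nabla F(\bar x)^{*}w\in\partial f(\bar x)$. Thus $E_{\mathrm{ord}}\subseteq\partial f(\bar x)=\conv E+N_{\sdom f}(\bar x)$, and since the right-hand side is convex and contains $N_{\sdom f}(\bar x)$, we get $\conv E_{\mathrm{ord}}+N_{\sdom f}(\bar x)\subseteq\partial f(\bar x)$, completing the chain of equalities.

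I expect this final attentive-to-ordinary upgrade to be the main obstacle: it is precisely where the qualification condition (\ref{eqn:qual}) is indispensable, since without it boundary approaches could produce spurious gradient limits that enlarge $E_{\mathrm{ord}}$. The propagation of directional Lipschitzness to all nearby domain points — resting on the convex-geometry fact that nonempty interior of $\dom g$ forces pointedness of every normal cone — is the other point requiring care.
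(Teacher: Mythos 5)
Your proof is correct and follows essentially the same route as the paper: verify the hypotheses of Theorem~\ref{thm:main} at all nearby domain points, collapse the horizon terms via $\partial^{\infty} f(\bar{x})=N_{\sdom f}(\bar{x})$ together with Clarke regularity of amenable functions, and then upgrade $f$-attentive to ordinary convergence via outer-semicontinuity of $\partial f$ --- a fact the paper simply cites but you prove by hand through the normalization argument resting on the qualification condition (\ref{eqn:qual}). One small repair: $g$ need not be differentiable at $F(x_i)$, so instead of writing $\nabla f(x_i)=\nabla F(x_i)^{*}\nabla g(F(x_i))$ you should invoke the amenable chain rule $\partial f(x_i)=\nabla F(x_i)^{*}\partial g(F(x_i))$ (valid near $\bar{x}$, as recorded in the proof of Lemma~\ref{lem:amen_dir}) to select some $w_i\in\partial g(F(x_i))$ with $\nabla f(x_i)=\nabla F(x_i)^{*}w_i$; your boundedness and closed-graph argument then goes through unchanged.
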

\begin{proof} By \cite[Exercise 10.25]{VA}, we have $\partial^{\infty} f(\bar{x})=N_{\sdom f}(\bar{x})$. 
Thus we have
$$\cone\big\{\lim_{\substack{i\to\infty\\ t_i \downarrow 0}}t_i\nabla f(x_i):x_i\xrightarrow[f]{\Omega} \bar{x}\big\}\subset N_{\sdom f}(\bar{x}).$$  

Observe $f$ is amenable, directionally Lipschitzian (Lemma~\ref{lem:amen_dir}), and vertically continuous (Proposition~\ref{prop:norm}) at each point of $\dom f$ near $\bar{x}$.
Applying Theorem~\ref{thm:main}, we deduce
$$\partial f(\bar{x})=\conv\big\{\lim_{i\to\infty} \nabla f(x_i):x_i\xrightarrow[f]{\Omega}\bar{x}\big\} + N_{\sdom f}(\bar{x}).$$ Noting that the subdifferential map $\partial f$ of an amenable function is outer-semicontinuous, the result follows.
\end{proof}

A natural question arises. Does the corollary above hold more generally without the stratifiability assumption? The answer turns out to be yes. This is immediate, in light of (\ref{eqn:rock}), for the subclass of {\em lower-${\bf C}^2$} functions (those functions that are locally representable as a difference of convex functions and convex quadratics).  A first attempt at a proof for general amenable functions might be to consider the representation $f=g\circ F$ and the chain rule
$$\partial f(x)=\nabla F(\bar{x})^{*}\partial g(F(\bar{x})).$$ One may then try to naively use Rockafellar's representation formula (\ref{eqn:rock}) for the convex subdifferential 
$$\partial g(F(\bar{x}))=\conv\{\lim_{i\to\infty} \nabla g(y_i): y_i\to F(\bar{x})\}$$
to deduce the result. However, we immediately run into trouble since $F$ may easily fail to be surjective onto a neighborhood of $F(\bar{x})$ in $\dom g$. Hence a different more sophisticated proof technique is required. For completeness, we present an argument below, which is a natural extension of the proof of \cite[Theorem 25.6]{rock}. It is furthermore instructive to emphasize how the stratifiability assumption allowed us in Corollary~\ref{cor:amen_rock} to bypass essentially all the technical details of the argument below. 

\begin{theorem}
Consider a function $f\colon\R^n\to\overline{\R}$ that is amenable at a point $\bar{x}$ lying in $\cl(\inter \dom f)$. Then the subdifferential admits the presentation
$$\partial f(\bar{x})=\conv\big\{\lim_{i\to\infty} \nabla f(x_i):x_i\stackrel{\Omega}{\rightarrow}\bar{x}\big\} + N_{\sdom f}(\bar{x}),$$
where $\Omega$ is any full-measure subset of $\dom f$.
\end{theorem}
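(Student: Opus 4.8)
The plan is to run the directional-derivative argument underlying Rockafellar's \cite[Theorem 25.6]{rock} \emph{directly} on $f$, rather than pulling a decomposition of $\partial g(F(\bar{x}))$ back through $\nabla F(\bar{x})^{*}$; this sidesteps the non-surjectivity of $F$ altogether. First I would record the consequences of Lemma~\ref{lem:amen_dir}: since $\bar{x}\in\cl(\inter\dom f)$, the function $f$ is directionally Lipschitzian at $\bar{x}$, the cone $N:=N_{\sdom f}(\bar{x})=\partial^{\infty}f(\bar{x})$ is pointed, and (because $F$ is ${\bf C}^1$ and $F(\inter\dom f)\subset\inter\dom g$ with $g$ convex) $f$ is locally Lipschitz continuous on $\inter\dom f$ near $\bar{x}$. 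In particular $f$ is differentiable almost everywhere on $\inter\dom f$ by Rademacher's theorem, so writing $D$ for this full-measure set of differentiability points, $\Omega\cap D$ still has full measure in $\inter\dom f$. Amenable functions are subdifferentially regular, so $\partial f(\bar{x})$ is closed and convex, the map $\partial f$ is outer-semicontinuous, and the subderivative $df(\bar{x})(\cdot)$ is the support function of $\partial f(\bar{x})$.

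The inclusion ``$\supset$'' is routine (and we may assume $\partial f(\bar{x})\neq\emptyset$, as otherwise both sides are empty): outer-semicontinuity of $\partial f$ gives $S:=\{\lim_{i}\nabla f(x_i):x_i\xrightarrow[]{\Omega}\bar{x}\}\subset\partial f(\bar{x})$, convexity gives $\conv S\subset\partial f(\bar{x})$, and since $N=\partial^{\infty}f(\bar{x})$ is the recession cone of $\partial f(\bar{x})$ we get $\conv S+N\subset\partial f(\bar{x})$. For the reverse inclusion I would pass to support functions and show
$$df(\bar{x})(d)\le \sigma_{S}(d)+\sigma_{N}(d)=\sigma_{\conv S+N}(d)\qquad\text{for all }d\in\R^n.$$
When $d$ is not tangent to $\dom f$ at $\bar{x}$ the term $\sigma_N(d)$ equals $+\infty$ and there is nothing to prove, so the content is the bound $df(\bar{x})(d)\le\sigma_S(d)$ for directions $d$ pointing into $\inter\dom f$ (these generate the tangent cone because $\dom f$ is epi-Lipschitzian at $\bar{x}$). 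For such $d$, continuity of $f$ along the segment $\bar{x}+sd$ (one-dimensional convexity of $g$ restricted to a line makes this limit exist) lets me write the difference quotient $\tfrac{1}{t}(f(\bar{x}+td)-f(\bar{x}))$ as an average of directional gradients $\langle\nabla f(\bar{x}+sd),d\rangle$ along the segment, and a Fubini argument lets me perturb $d$ slightly so that the segment meets $\Omega\cap D$ in full one-dimensional measure.

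The heart of the proof is then the estimate $\limsup_{s\downarrow 0}\langle \nabla f(\bar{x}+sd),d\rangle\le \sigma_S(d)$. Here the pointedness of $N$ is decisive: any \emph{bounded} sequence of gradients $\nabla f(\bar{x}+s_jd)$ has all its limits in $S$, while any \emph{unbounded} sequence has $\nabla f(\bar{x}+s_jd)/|\nabla f(\bar{x}+s_jd)|$ converging to a horizon gradient limit, hence to an element of $N\setminus\{0\}$; since $d$ lies in the interior of the polar cone $N^{\circ}=T_{\sdom f}(\bar{x})$, we have $\langle h,d\rangle<0$ for every such $h$, so the corresponding inner products diverge to $-\infty$ rather than obstruct the bound. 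Averaging and letting the segment length shrink yields $df(\bar{x})(d)\le\sigma_S(d)$ for interior directions, and taking limits over $d$ extends it to the whole tangent cone. This establishes $\partial f(\bar{x})\subset\cl(\conv S+N)$, and combined with ``$\supset$'' gives $\partial f(\bar{x})=\cl(\conv S+N)$.

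I expect the main obstacle to be exactly this last estimate together with removing the closure. The estimate is where all the hypotheses conspire: directional Lipschitzness forces gradient blow-ups into $N$, full measure of $\Omega$ supplies genuine gradient points along almost every line, and regularity identifies $df(\bar{x})$ with the support function. Removing the closure --- upgrading $\cl(\conv S+N)$ to $\conv S+N$ --- is the remaining technical step; it follows from a standard recession argument once one checks, much as in Lemma~\ref{lem:outer}, that the recession directions of $\conv S$ are contained in the pointed cone $N$, so that the Minkowski sum $\conv S+N$ is already closed.
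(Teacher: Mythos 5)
Your proposal is correct in substance but follows a genuinely different route from the paper's proof. The paper works through the composite representation $f=g\circ F$ from start to finish: it decomposes $\partial f(\bar{x})$ into (closures of) exposed points plus the recession cone $N_{\sdom f}(\bar{x})$, shows for each exposing direction $\bar{a}$ that the ray $F(\bar{x})+t\nabla F(\bar{x})\bar{a}$ enters $\inter\dom g$ (a separation argument together with \cite[Theorem 11.3]{rock}), then applies \cite[Theorem 24.6]{rock} to the convex function $g$ to manufacture subgradients $w_i\in\partial g(F(\bar{x}+t_i\bar{a}))$ with $\nabla F(\bar{x}+t_i\bar{a})^{*}w_i\to\bar{v}$, and finally quotes Clarke's formula (\ref{eqn:formula}) at the nearby interior points $\bar{x}+t_i\bar{a}$. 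You instead run a support-function/subderivative argument directly on $f$, in the spirit of the proof of (\ref{eqn:lewis}) in \cite{BLO}: regularity identifies $df(\bar{x})$ with the support function of $\partial f(\bar{x})$, Fubini plus absolute continuity lets you integrate gradients along rays into $\inter\dom f$, and pointedness of $N:=N_{\sdom f}(\bar{x})$ converts gradient blow-up into inner products diverging to $-\infty$. The composite structure enters your proof only through soft facts (regularity, local Lipschitz continuity on $\inter\dom f$, radial continuity). What your route buys: no Straszewicz-type exposed-point machinery and no Theorem 24.6, and the blow-up-into-$N$ mechanism is exactly the one driving the paper's Theorem~\ref{thm:main}, so the two results become visibly unified. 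What the paper's route buys: no measure theory beyond citing (\ref{eqn:formula}), with all boundary behavior processed through one-dimensional convex analysis of $g$.

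Two steps in your sketch gloss over real work. First, radial continuity: the image of the segment $\bar{x}+sd$ under $F$ is a ${\bf C}^1$ curve, not a line, so ``one-dimensional convexity of $g$ restricted to a line'' is not literally available. You need the same separation argument as in the paper to see that $b:=\nabla F(\bar{x})d$ satisfies $F(\bar{x})+tb\in\inter\dom g$ for small $t>0$ whenever $d$ is interior to the tangent cone, and then the chord estimate $g(F(\bar{x})+sb_s)\le(1-s/\alpha)\,g(F(\bar{x}))+(s/\alpha)\,g(F(\bar{x})+\alpha b_s)$, with $b_s\to b$, to conclude $f(\bar{x}+sd)\to f(\bar{x})$. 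Second, and more seriously, ``taking limits over $d$ extends it to the whole tangent cone'' is false as a general principle: support functions are only lower semicontinuous, and an inequality between support functions on a dense (even full-measure) set of directions does not propagate --- e.g.\ with $C_1=\{(t,0):t\geq 0\}$ and $C_2=\{(x,y):x\leq 0\}$ one has $\sigma_{C_1}\leq\sigma_{C_2}$ off the line $\{y=0\}$, yet $C_1\not\subset C_2$, where $\sigma_C$ denotes the support function. The step is rescued by a fact you have at hand but never invoke: the easy inclusion already gives $S\subset\partial f(\bar{x})$, hence $\sigma_S\leq\sigma_{\partial f(\bar{x})}<\infty$ on the tangent cone $T=N^{\circ}$; thus $\sigma_S$ is convex and finite, hence continuous, on $\inter T$, so the almost-everywhere inequality upgrades to all of $\inter T$, and boundary directions follow by sliding along a segment toward an interior point, using lower semicontinuity of $\sigma_{\partial f(\bar{x})}$ on one side and convexity of $\sigma_S$ on the other. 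With these repairs, and with your closedness argument (which is sound: $S$ is a closed subset of $K+N$ with $K$ compact and $N$ pointed, and the Carath\'{e}odory-plus-pointedness compactness argument of Lemma~\ref{lem:outer} then shows $\conv S+N$ is closed), your proof goes through.
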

\begin{proof}
Recall that $f$ is Clarke regular at $\bar{x}$, and therefore $\partial^{\infty} f(\bar{x})=N_{\sdom f}(\bar{x})$ is the recession cone of $\partial f(\bar{x})$. Combining this with the fact that the map $\partial f$ is outer-semicontinuous at $\bar{x}$, we immediately deduce the inclusion ``$\supset$''.   

We now argue the reverse inclusion. To this end, let $V$ be a neighborhood of $\bar{x}$ so that $f$ can be written as a composition $f=g\circ F$, for a ${\bf C}^1$ mapping $F\colon V\to\R^m$ and a proper, lower-semicontinuous, convex function $g\colon\R^m\to\overline{\R}$, so that the qualification condition 
\begin{equation*}
N_{\sdom g}(F(\bar{x}))\cap \kernal \nabla F(\bar{x})^{*}=\{0\},
\end{equation*}
is satisfied.
Since $f$ is directionally Lipschitzian at $\bar{x}$, the subdifferential $\partial f(x)$ is the sum of the convex hull of its extreme points and the recession cone $N_{\sdom f}(\bar{x})$. Furthermore every extreme point is a limit of exposed points. Thus
$$\partial f(\bar{x})= \conv (\cl E)+N_{\sdom f}(\bar{x}),$$
where $E$ is the set of all exposed point of $\partial f(\bar{x})$.

Hence to prove the needed inclusion, it suffices to argue the inclusion 
$$E\subset \conv\big\{\lim_{i\to\infty} \nabla f(x_i):x_i\stackrel{\Omega}{\rightarrow}\bar{x}\big\}.$$
Here, we should note that since $f$ is directionally Lipschitzian at $\bar{x}$, the set on the right hand side is closed.

To this end, let $\bar{v}$ be an arbitrary exposed point of $\partial f(\bar{x})$. By definition, there exists a vector $\bar{a}\in \R^n$ with $|\bar{a}|=1$ and satisfying
$$\langle \bar{a},\bar{v}\rangle > \langle \bar{a},v\rangle \textrm{ for all } v\in \partial f(\bar{x}) \textrm{ with } v\neq \bar{v}.$$
Since $N_{\sdom f}(\bar{x})$ is the recession cone of $\partial f(\bar{x})$, from above we deduce
$$\langle \bar{a}, z\rangle < 0 \textrm{ for all } 0\neq z\in N_{\sdom f}(\bar{x}),$$
and consequently
$$\langle \nabla F(\bar{x})\bar{a}, w\rangle < 0 \textrm{ for all } 0\neq w\in N_{\sdom g}(F(\bar{x})).$$
Consider the half-line $\{F(\bar{x})+t\nabla F(\bar{x})\bar{a}: t\geq 0\}$. We claim that this half-line cannot be separated from $\dom g$. Indeed, otherwise there would exist a nonzero vector $\bar{w}\in N_{\sdom g}(\bar{x})$ so that for all $t>0$ and all $x\in \dom g$ we have
$$\langle x,\bar{w}\rangle\leq \langle F(\bar{x})+t\nabla F(\bar{x})\bar{a},\bar{w}\rangle < \langle F(\bar{x}),\bar{w}\rangle,$$
which is a contradiction. Hence by \cite[Theorem 11.3]{rock}, this half-line must meet the interior of $\dom g$. By convexity then there exists a real number $\alpha >0$ satisfying $$\{F(\bar{x})+t\nabla F(\bar{x})\bar{a}: 0 < t\leq \alpha\}\subset \inter(\dom g).$$

Consequently the points $F(\bar{x}+t\bar{a})$ lie in $\inter \dom g$ for all sufficiently small $t >0$.
By Lemma~\ref{lem:amen_dir}, we deduce that there exists a real number $\beta >0$
so that 
$$\{\bar{x}+t\bar{a}: 0<t\leq\beta\}\subset \inter (\dom f).$$ 
Hence $f$ is Lipschitz continuous at each point $\bar{x}+t\bar{a}$ (for $0<t\leq\beta$), and so from (\ref{eqn:formula}) we obtain 
\begin{equation}\label{eq:form}
\partial f(\bar{x}+t\bar{a})=\conv\{\lim_{j\to\infty}\nabla f(x_j):x_j\stackrel{\Omega}{\rightarrow}\bar{x}+t\bar{a}\}.
\end{equation}

Now choose a sequence $t_i\to 0$ and observe that by \cite[Theorem 24.6]{rock}, for any $\epsilon>0$ we have 
\begin{align*}
\partial g(F(\bar{x}+t_i\bar{a}))&\subset \argmax_{v\in\partial g(F(\bar{x}))}\langle \nabla F(\bar{x})\bar{a}, v\rangle+\epsilon{\bf B},\\
&= \argmax_{v\in\partial g(F(\bar{x}))}\langle \bar{a}, \nabla F(\bar{x})^{*}v\rangle+\epsilon{\bf B},
\end{align*}
for all large $i$.
We deduce, 
$$\nabla F(\bar{x})^{*}\partial g(F(\bar{x}+t_i\bar{a}))\subset \argmax_{w\in\partial f(\bar{x})}\langle \bar{a}, w\rangle+\epsilon{\bf B}=\bar{v}+\epsilon{\bf B}.$$
Thus there exists a sequence $w_i\in \partial g(F(\bar{x}+t_i\bar{a}))$ with $\nabla F(\bar{x})^{*}w_i\to \bar{v}$. Consequently the vectors $\nabla F(\bar{x}+t_i\bar{a})^{*}w_i\in \partial f(\bar{x}+t_i\bar{a})$ converge to $\bar{v}$. The result now easily follows from (\ref{eq:form}) and the fact that $f$ is directionally Lipschitzian at $\bar{x}$.
\end{proof}

%\begin{remark}
%An analogue of Corollary~\ref{cor:conv_roc} holds for amenable functions $f\colon\R^n\to\overline{\R}$ provided that $N_{\sdom f}(\bar{x})$ is pointed.  
%\end{remark}

The following is a further illustration of the applicability of our results to a wide variety of situations.
\begin{corollary}
Consider a proper stratifiable function $f\colon\R^n\to\overline{\R}$ that is locally Lipschitz continuous at a point $\bar{x}$, relative to $\dom f$. Suppose furthermore that $\dom f$ is an epi-Lipschitzian set at $\bar{x}$. Then the formula
$$\partial f(\bar{x})=\conv\big\{\lim_{i\to\infty} \nabla f(x_i):x_i\stackrel{\Omega}{\rightarrow}\bar{x}\big\} + N^{c}_{\sdom f}(\bar{x}),$$
holds, where $\Omega$ is any dense subset of $\dom f$.
\end{corollary}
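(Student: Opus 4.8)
The right-hand side is precisely the characterization of the Clarke subdifferential $\partial_c f(\bar x)$ supplied by Theorem~\ref{thm:main} once its middle (horizon) term is deleted. Accordingly, the plan is to verify the two standing hypotheses of Theorem~\ref{thm:main} at all points of $\dom f$ near $\bar x$ — vertical continuity and the directionally Lipschitzian property — then apply the theorem and show that the horizon term collapses to $\{0\}$. Vertical continuity is the easy half: local Lipschitz continuity relative to $\dom f$ makes $f$ continuous relative to $\dom f$ on a neighborhood of $\bar x$, so Proposition~\ref{prop:norm}(\ref{it:2}) yields vertical continuity at every point of $\dom f$ near $\bar x$.

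The directionally Lipschitzian property is where the work lies, and I expect it to be the main obstacle. I would first establish the inclusion $\partial^{\infty} f(\bar x)\subseteq N_{\sdom f}(\bar x)$, which says that every horizon subgradient of a function Lipschitz relative to its domain is a normal to that domain. A clean route is to extend $f|_{\dom f}$ to a globally $\kappa$-Lipschitz function $\tilde f$ on $\R^n$ (a McShane extension), so that locally $\epi f=\epi\tilde f\cap(\dom f\times\R)$. Since $\tilde f$ is Lipschitz, $N_{\sepi \tilde f}(\bar x,f(\bar x))$ contains no nonzero horizontal vector; hence the intersection $N_{\sepi \tilde f}(\bar x,f(\bar x))\cap\big(-(N_{\sdom f}(\bar x)\times\{0\})\big)=\{0\}$ serves as a valid qualification condition, and the normal cone intersection rule gives $N_{\sepi f}(\bar x,f(\bar x))\subseteq N_{\sepi \tilde f}(\bar x,f(\bar x))+N_{\sdom f}(\bar x)\times\{0\}$. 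Restricting this inclusion to horizontal normals forces $\partial^{\infty}f(\bar x)\subseteq N_{\sdom f}(\bar x)$. Because $\dom f$ is epi-Lipschitzian at $\bar x$, the cone $N_{\sdom f}(\bar x)$ is pointed, so $\partial^{\infty}f(\bar x)$ is pointed and $f$ is directionally Lipschitzian at $\bar x$. The epi-Lipschitzian property persists at nearby points of $\dom f$ by Rockafellar's characterization \cite{Clarke_roc}, so the same argument delivers the directionally Lipschitzian property at all points of $\dom f$ near $\bar x$.

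With both hypotheses in hand, Theorem~\ref{thm:main} yields
$$\partial_c f(\bar x)=\conv E+\cone H+N^{c}_{\sdom f}(\bar x),$$
where $E=\{\lim_{i}\nabla f(x_i):x_i\xrightarrow[f]{\Omega}\bar x\}$ and $H=\{\lim_{i}t_i\nabla f(x_i):t_i\downarrow 0,\ x_i\xrightarrow[f]{\Omega}\bar x\}$. Two simplifications finish the proof. First, continuity of $f$ relative to $\dom f$ makes $f$-attentive convergence along $\Omega\subseteq\dom f$ coincide with ordinary convergence, so $E=\{\lim_{i}\nabla f(x_i):x_i\stackrel{\Omega}{\rightarrow}\bar x\}$, matching the claimed gradient set. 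Second, every differentiability point $x\in\dom\nabla f$ lies in $\inter\dom f$, where $f$ is $\kappa$-Lipschitz in the ordinary sense, so $|\nabla f(x)|\le\kappa$; hence $t_i\nabla f(x_i)\to 0$ whenever $t_i\downarrow 0$, giving $H=\{0\}$ and $\cone H=\{0\}$. Substituting these into the displayed formula yields exactly $\conv\{\lim_i\nabla f(x_i):x_i\stackrel{\Omega}{\rightarrow}\bar x\}+N^{c}_{\sdom f}(\bar x)$, as required.
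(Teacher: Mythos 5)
Your proof is correct and takes essentially the same route as the paper's: both extend $f$ to a globally Lipschitz function $\tilde f$ agreeing with $f$ on $\dom f$ near $\bar x$, use a calculus rule to conclude $\partial^{\infty} f(x)\subset N_{\sdom f}(x)$ (the paper applies the sum rule to $f=\tilde f+\delta_{\sdom f}$, you apply the equivalent epigraphical intersection rule under the same automatic qualification condition), deduce the directionally Lipschitzian property from pointedness of $N_{\sdom f}$, and then combine Proposition~\ref{prop:norm} with Theorem~\ref{thm:main}, using boundedness of the gradients to eliminate the conical term. Your write-up is merely more explicit than the paper's terse proof about the two final simplifications (that $H=\{0\}$ and that $f$-attentive convergence reduces to ordinary convergence).
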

\begin{proof} Since $f$ is locally Lipschitz near $\bar{x}$ relative to $\dom f$, there exists a globally Lipschitz function $\tilde{f}\colon\R^n\to\R$, agreeing with $f$ on $\dom f$ near $\bar{x}$. Hence, we have
$$f(x)=\tilde{f}(x)+\delta_{\sdom f}(x), \textrm{ locally near } \bar{x}.$$
Combining this with \cite[Exercise 10.10]{VA}, we deduce
$$\partial^{\infty} f(x)\subset N_{\sdom f}(x), \textrm{ for } x \textrm{ near } \bar{x}.$$
We conclude that $f$ is directionally Lipschitzian  at all points of $\dom f$ near $\bar{x}$, and furthermore since the gradients of $\tilde{f}$ are bounded near $\bar{x}$ so are the gradients of $f$.
The result follows immediately by an application of Proposition~\ref{prop:norm} and Theorem~\ref{thm:main}.
\end{proof}

\section{Comments on random sampling}\label{sec:random}
Consider a proper, stratifiable function $f\colon\R^n\to\overline{\R}$, that is finite at $\bar{x}$, directionally Lipschitzian at all points of $\dom f$ near $\bar{x}$, and is continuous near $\bar{x}$ relative to the domain of $f$. Let us suppose for the time being that the normal cone $N_{\sdom f}(\bar{x})$ is known exactly.  Our interest lies in approximating the Clarke subdifferential $\partial_c f(\bar{x})$. In light of 
Corollary~\ref{cor:cite}, one can quickly modify the method considered in \cite{BLO}, in order to achieve this goal.  We now describe this procedure in more detail.

Fix a certain radius $\delta >0$ and a sample space $\Lambda=B_{\delta}(\bar{x})$, along with an associated probability measure that is absolutely continuous with respect to the Lebesgue measure $\mu$ on $\R^n$. We assume that the corresponding density $\theta$ is strictly positive almost everywhere on $\Lambda$. Observe that the set $\Lambda\cap \dom \nabla f$ has nonempty interior, and consequently has positive probability. We can consider a sequence of independent trials with outcomes $x_i\in\Lambda$ for $i=1,2,\ldots$, and form trial sets $$C_k=\conv\{\nabla f(x_i): x_i\in \Lambda\cap\dom \nabla f,~ 1\leq i\leq k\}.$$ One may then hope that the sets $D_k:=C_k+ N^{c}_{\sdom f}(\bar{x})$ approximate the Clarke subdifferential $\partial_c f(\bar{x})$ well, as $k$ tends to infinity. One can verify this rigorously via some modifications of arguments made in \cite{BLO}. The starting point is the following result. We omit the proof since it is identical to the argument in \cite[Theorem 2.1]{BLO}.

\begin{theorem}[Limiting Approximation]\label{thm:la}
Consider a function $f\colon \R^n \to \overline{\R}$ and a point $\bar{x}\in\dom f$. Suppose that $f$ is continuously differentiable on an open set dense in $\dom f$. Then for any sampling radius $\delta$, we have 
$$\cl \bigcup^{\infty}_{k=1} C_k = \cl\conv \nabla f(B_{\delta}(\bar{x})), \textrm{ almost surely}.$$ 
\end{theorem}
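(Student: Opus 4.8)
The plan is to verify the two inclusions separately, noting first that since the trial sets $C_k$ are convex and nested ($C_k\subseteq C_{k+1}$), their union $\bigcup_k C_k$ is convex, so $\cl\bigcup_k C_k$ is a closed convex set and the asserted identity makes sense. The inclusion ``$\subseteq$'' is deterministic and immediate: every sampled point $x_i$ lies in $\Lambda=B_\delta(\bar{x})$, so each gradient $\nabla f(x_i)$ entering the definition of $C_k$ belongs to $\nabla f(B_\delta(\bar{x}))$; hence $C_k\subseteq\conv\nabla f(B_\delta(\bar{x}))$ for every $k$, and $\cl\bigcup_k C_k\subseteq\cl\conv\nabla f(B_\delta(\bar{x}))$. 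All the work lies in the reverse, almost-sure inclusion.

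The probabilistic heart of the argument is to show that, almost surely, the sample sequence $(x_i)$ is dense in $\Lambda$. First I would fix a countable base $\{U_j\}$ for the topology of $\Lambda$ consisting of open balls contained in $\Lambda$; each such $U_j$ is nonempty and open, hence of positive Lebesgue measure, and because the density $\theta$ is strictly positive almost everywhere on $\Lambda$, the hitting probability $p_j:=\int_{U_j}\theta\,d\mu$ is strictly positive. The trials being independent and identically distributed, the events $\{x_i\in U_j\}$ are independent with constant probability $p_j>0$, so $\sum_i P(x_i\in U_j)=\infty$, and the second Borel--Cantelli lemma gives $P(x_i\in U_j\ \text{infinitely often})=1$. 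Intersecting over the countably many indices $j$, almost surely every basic open set is visited, and hence $(x_i)$ is dense in $\Lambda$.

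Next I would convert density of the samples into recovery of the gradient image. Let $O$ denote the open dense subset of $\dom f$ on which $f$ is continuously differentiable. Restricting to the full-measure event on which $(x_i)$ is dense, fix any $y\in O\cap B_\delta(\bar{x})$: since $O\cap B_\delta(\bar{x})$ is open, there is a subsequence $x_{i_m}\to y$ with $x_{i_m}\in O\subseteq\dom\nabla f$, and continuity of $\nabla f$ on $O$ yields $\nabla f(x_{i_m})\to\nabla f(y)$; as each $\nabla f(x_{i_m})\in C_{i_m}\subseteq\bigcup_k C_k$, we get $\nabla f(y)\in\cl\bigcup_k C_k$. Thus $\nabla f(O\cap B_\delta(\bar{x}))\subseteq\cl\bigcup_k C_k$, and taking closed convex hulls (the left-hand side being already closed and convex) gives $\cl\conv\nabla f(O\cap B_\delta(\bar{x}))\subseteq\cl\bigcup_k C_k$.

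The hard part will be closing the last gap, namely that $\cl\conv\nabla f(B_\delta(\bar{x}))=\cl\conv\nabla f(O\cap B_\delta(\bar{x}))$ --- equivalently, that every gradient value at a differentiability point lying outside the $C^1$-locus $O$ already belongs to the closed convex hull of the gradients taken on $O$. Continuity of $\nabla f$ is unavailable at such points, so I would argue instead via a mean value (Darboux-type) principle: for a point $y$ at which $f$ is differentiable, difference quotients of $f$ along lines through $y$ agree with directional derivatives at intermediate points, forcing $\nabla f(y)$ to be a limit of nearby gradients drawn from $O$. In the intended stratifiable setting this gap is in fact negligible, since $\dom\nabla f$ exceeds the open union $O$ of the top-dimensional strata only on a lower-dimensional set. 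Combining this with the previous paragraph yields $\cl\conv\nabla f(B_\delta(\bar{x}))\subseteq\cl\bigcup_k C_k$, completing the almost-sure reverse inclusion.
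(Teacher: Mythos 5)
You should know at the outset that the paper does not actually prove this theorem: its ``proof'' is the single sentence that the argument is identical to that of \cite[Theorem 2.1]{BLO}, and that cited argument is precisely your first three paragraphs (the deterministic inclusion, almost-sure density of the samples via Borel--Cantelli, and recovery of $\nabla f(O\cap B_\delta(\bar{x}))$ from continuity of $\nabla f$ on the open dense set $O$, followed by taking closed convex hulls). Those steps are correct, so the core of your proposal matches the intended proof. What they establish, however, is only the sandwich $\cl\conv\nabla f(O\cap B_\delta(\bar{x}))\subset \cl\bigcup_k C_k\subset \cl\conv\nabla f\big(B_\delta(\bar{x})\cap\dom\nabla f\big)$ almost surely, and you correctly identified that everything hinges on collapsing this sandwich. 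That final step is a genuine gap, and as you have set it up it cannot be repaired.

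The identity $\cl\conv\nabla f(B_\delta(\bar{x}))=\cl\conv\nabla f(O\cap B_\delta(\bar{x}))$ is false under the stated hypotheses, so no mean-value argument can prove it. Take a fat Cantor set $E\subset[0,1]$ (closed, nowhere dense, $\mu(E)=1/2$) and $f(x)=\mu(E\cap[0,x])$: this $f$ is Lipschitz and locally constant on the open dense set $O=\R\setminus E$, so $\nabla f\equiv 0$ there, yet $f'(y)=1$ at every Lebesgue density point $y$ of $E$. Hence a gradient at a differentiability point off $O$ need not be a limit of gradients from $O$, nor even lie in $\cl\conv\nabla f(O\cap B_\delta(\bar{x}))$; concretely, your Darboux/mean-value step fails because the intermediate points it produces lie in $E$ rather than in $O$ (and in dimension $n\geq 2$ a segment through $y$ need not meet $O$ at all, $f$ being possibly discontinuous off $O$). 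Your fallback --- that in the stratifiable setting the exceptional set is lower-dimensional and hence negligible --- is a non sequitur: what matters is the measure seen by the sampling, not dimension. A positive-measure exceptional set, as above, is sampled, so its gradients enter $\cl\bigcup_k C_k$ even though they are invisible from $O$; a null exceptional set is almost surely never sampled, and one can construct $f$ on $\R$, locally constant on an open dense $O$, differentiable at one extra point $y$ with $f'(y)=1$, for which $\cl\bigcup_k C_k=\{0\}$ almost surely while the right-hand side contains $1$ --- so under your reading of $\nabla f(B_\delta(\bar{x}))$ the theorem itself fails in the stated generality. The statement is coherent if, as in \cite{BLO}, one reads $\nabla f(B_\delta(\bar{x}))$ as the image of the ${\bf C}^1$ locus and assumes $\dom f\setminus O$ is Lebesgue null; that assumption also repairs your easy inclusion (almost surely no retained sample lies off $O$), and your first three paragraphs then constitute a complete proof. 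Alternatively, under the standing assumptions of Section~\ref{sec:random}, the identification of the two hulls follows from Corollary~\ref{cor:cite}, i.e.\ from the paper's main theorem, not from an elementary mean-value argument.
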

The following theorem establishes that $\cl\bigcup^{\infty}_{k=1} D_k$ is almost surely an outer approximation of $\partial_c f(\bar{x})$.
\begin{theorem}[Outer approximation]\label{theorem:out}
Consider a proper, stratifiable function $f\colon\R^n\to\overline{\R}$, that is finite at $\bar{x}$. Suppose that $f$ is directionally Lipschitzian at all points of $\dom f$ near $\bar{x}$, and is continuous near $\bar{x}$ relative to the domain of $f$. Then 
$$\partial_c f(\bar{x})\subset \cl\bigcup^{\infty}_{k=1} D_k, \textrm{ almost surely}.$$
%$$\dist(v,\partial_c f(\bar{x}))\leq \dist(v,\cl \bigcup^{\infty}_{i=1} C_i) +\delta, \textrm{ almost surely}.$$
%Consequently
%$$|\dist(v,\cl\bigcup^{\infty}_{i} C_i)-\dist(v,\partial_c f(\bar{x}))| <\delta, \textrm{ almost surely}.$$
\end{theorem}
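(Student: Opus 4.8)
The plan is to chain Corollary~\ref{cor:cite}, which bounds $\partial_c f(\bar{x})$ from outside in terms of nearby gradients together with the normal cone to the domain, with the almost-sure gradient reconstruction supplied by Theorem~\ref{thm:la}, and then to finish with an elementary manipulation of Minkowski sums and closures. Throughout, abbreviate $N:=N^c_{\sdom f}(\bar{x})$, keep the sampling radius $\delta$ fixed, so that $\Lambda=B_\delta(\bar{x})$, and set $A:=\bigcup_{k=1}^\infty C_k$.

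First I would confirm the hypotheses needed downstream. Since the conclusion concerns only $B_\delta(\bar{x})$ and all standing assumptions are local near $\bar{x}$, we may assume $f$ is directionally Lipschitzian and continuous relative to $\dom f$ on $\dom f\cap\Lambda$. Then Proposition~\ref{prop:cl_int} shows $\dom\nabla f$ is dense in $\dom f$ near $\bar{x}$; more precisely, the union of the $n$-dimensional strata is an open set on which $f$ is smooth and which is dense in $\dom f$ near $\bar{x}$ (the $f$-attentive density of that proposition coincides with ordinary density because $f$ is continuous on $\dom f$ there). Hence $f$ is continuously differentiable on an open set dense in $\dom f$, so Theorem~\ref{thm:la} applies, and $\Lambda\cap\dom\nabla f$ has nonempty interior, making the trial sets $C_k$ eventually nonempty with positive probability.

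Next, applying Corollary~\ref{cor:cite} with the dense set $\Omega=\dom\nabla f$ and retaining only the term of radius $\delta$ in its intersection, I obtain the deterministic inclusion
\[
\partial_c f(\bar{x})\subset \cl\conv\big(\nabla f(B_\delta(\bar{x}))\big)+N.
\]
By Theorem~\ref{thm:la}, almost surely $\cl\conv(\nabla f(B_\delta(\bar{x})))=\cl A$, so on that almost-sure event $\partial_c f(\bar{x})\subset \cl(A)+N$.

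It then remains only to absorb the closure and the summand $N$ into $\bigcup_k D_k$. One checks the set identity $\bigcup_{k}D_k=\bigcup_k(C_k+N)=A+N$, since any $c+v$ with $c\in C_{k_0}$ and $v\in N$ lies in $D_{k_0}$; and the general inclusion $\cl(A)+N\subset\cl(A+N)$, since if $a=\lim_j a_j$ with $a_j\in A$ and $v\in N$ then $a+v=\lim_j(a_j+v)\in\cl(A+N)$. Chaining these gives
\[
\partial_c f(\bar{x})\subset \cl(A)+N\subset \cl(A+N)=\cl\bigcup_{k=1}^\infty D_k \quad\text{almost surely},
\]
as required. There is no serious obstacle here: all the analytic weight sits inside Corollary~\ref{cor:cite} and Theorem~\ref{thm:la}, and the only two points demanding care are verifying the open, dense differentiability hypothesis of Theorem~\ref{thm:la} (which rests on the stratification through Proposition~\ref{prop:cl_int}) and taking the Minkowski-sum/closure inclusion in the correct direction.
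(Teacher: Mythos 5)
Your proposal is correct and follows exactly the route the paper takes: its proof of this theorem is literally ``immediate from Theorem~\ref{thm:la} and Corollary~\ref{cor:cite}.'' You have simply made explicit the details the paper leaves implicit --- verifying the dense-differentiability hypothesis of Theorem~\ref{thm:la} via Proposition~\ref{prop:cl_int}, fixing one radius $\delta$ in the intersection of Corollary~\ref{cor:cite}, and the Minkowski-sum/closure bookkeeping $\cl(A)+N\subset\cl(A+N)=\cl\bigcup_k D_k$.
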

\begin{proof} This is immediate from Theorem~\ref{thm:la} and Corollary~\ref{cor:cite}.
%Let $\gamma= \dist(v,\cl \bigcup_i C_i)$. Since $f$ is directionally Lipschitzian at $\bar{x}$, the mapping $\partial_c f$ is outer-semicontinuous at $\bar{x}$. Consequently the truncated mapping $x\mapsto \overline{B}_{\gamma}(v)\cap\partial_c f(x)$ is upper-semicontinuous at $\bar{x}$. We conclude that there exists a real number $\epsilon > 0$ such that $$\overline{B}_{\gamma}(v)\cap \partial_c f(\bar{x}+\epsilon {\bf B})\subset \overline{B}_{\gamma}(v)\cap \partial_c f(\bar{x})+ \delta {\bf B}.$$
%Almost surely, the set $\cl \bigcup_i C_i$ is contained in $\cl\conv \partial_c f(\bar{x}+\epsilon {\bf B})$. We deduce
%$$\overline{B}_{\gamma}(v)\cap \cl \bigcup^{\infty}_{i=1} C_i\subset \overline{B}_{\gamma}(v)\cap \partial_c f(\bar{x})+ \delta \overline{{\bf B}}.$$ 
\end{proof}

On the other hand, the following lemma shows that $\cl \bigcup^{\infty}_{k=1} D_k$ is not too much bigger than $\partial_c f(\bar{x})$, as long as the radius is sufficiently small and we restrict ourselves to considering only bounded subsets of $\R^n$.
\begin{lemma}[Truncated Inner approximation]\label{lem:trunc:inner}
Consider a proper, stratifiable function $f\colon\R^n\to\overline{\R}$, that is finite at $\bar{x}$. Suppose that $f$ is directionally Lipschitzian at all points of $\dom f$ near $\bar{x}$, and is continuous near $\bar{x}$ relative to the domain of $f$. Then for any compact subset $\Gamma \subset\R^n$ and a real number $\epsilon >0$ we have, for any sufficiently small sampling radius,
$$\Gamma \cap \cl \bigcup^{\infty}_{k=1} D_k\subset \Gamma \cap \partial_c f(\bar{x})+\epsilon {\bf B}, \textrm{ almost surely}.$$
If, in addition, $f$ is Lipschitz continuous on its domain, then for any real number $\epsilon >0$ we have, for any sufficiently small sampling radius,
$$\cl \bigcup^{\infty}_{k=1} D_k\subset \partial_c f(\bar{x})+\epsilon {\bf B}, \textrm{ almost surely}.$$
\end{lemma}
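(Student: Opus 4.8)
The plan is to first strip away the probabilistic content and then argue deterministically via the nested family $S_\delta:=\cl\conv \nabla f(B_\delta(\bar x))$. Since the trial sets $C_k$ increase with $k$ and $D_k=C_k+N^c_{\sdom f}(\bar x)$, monotonicity gives $\bigcup_k D_k=(\bigcup_k C_k)+N^c_{\sdom f}(\bar x)$, and a short closure manipulation combined with Theorem~\ref{thm:la} (which supplies $\cl\bigcup_k C_k=S_\delta$ almost surely) yields $\cl\bigcup_k D_k=\cl\big(S_\delta+N^c_{\sdom f}(\bar x)\big)$ almost surely, for the fixed radius $\delta$. Thus once $\delta$ has been chosen as a function of $\Gamma$ and $\epsilon$, the event of Theorem~\ref{thm:la} holds with probability one and everything left is deterministic. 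I would also record, from Corollary~\ref{cor:cite} applied with $\Omega=\dom\nabla f$, that the $S_\delta$ decrease as $\delta\downarrow 0$ to $S_0:=\bigcap_{\delta>0}S_\delta$ and that $\partial_c f(\bar x)=S_0+N^c_{\sdom f}(\bar x)$.

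For the Lipschitz-on-domain case (second display) the gradients are bounded near $\bar x$, so every $S_\delta$ is compact, and the nested compact sets $S_\delta$ approach $S_0$ from the outside: for all small $\delta$ one has $S_\delta\subseteq S_0+\epsilon{\bf B}$. Adding the cone and closing, $\cl\big(S_\delta+N^c_{\sdom f}(\bar x)\big)\subseteq \cl\big((S_0+N^c_{\sdom f}(\bar x))+\epsilon{\bf B}\big)=\partial_c f(\bar x)+\epsilon\overline{{\bf B}}$, where the last equality uses that $\partial_c f(\bar x)$ is closed; after relabelling $\epsilon$ this is precisely the claim.

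For the truncated case I would argue by contradiction. If the inclusion failed for some compact $\Gamma$ and $\epsilon>0$, there would be radii $\delta_j\downarrow 0$ and points $w_j\in\Gamma\cap\cl\big(S_{\delta_j}+N^c_{\sdom f}(\bar x)\big)$ with $\dist\big(w_j,\Gamma\cap\partial_c f(\bar x)\big)\ge\epsilon$; compactness of $\Gamma$ lets me pass to $w_j\to w\in\Gamma$, and I may write $w_j=s_j+n_j+e_j$ with $s_j\in S_{\delta_j}$, $n_j\in N^c_{\sdom f}(\bar x)$ and $|e_j|\le 1/j$. If $(s_j)$ is bounded then so is $(n_j)$, and passing to limits gives $n_j\to n\in N^c_{\sdom f}(\bar x)$ and $s_j\to s$; since $s_j\in S_{\delta_j}\subseteq S_\delta$ for each fixed $\delta$ and all large $j$, closedness of $S_\delta$ forces $s\in\bigcap_\delta S_\delta=S_0$, whence $w=s+n\in S_0+N^c_{\sdom f}(\bar x)=\partial_c f(\bar x)$. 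As $w\in\Gamma$, this contradicts $\dist\big(w_j,\Gamma\cap\partial_c f(\bar x)\big)\ge\epsilon$, and the $\Gamma$-restriction on the right-hand side is handled automatically because the limit point $w$ itself lands in $\Gamma\cap\partial_c f(\bar x)$.

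The heart of the matter, and the step I expect to be the main obstacle, is ruling out the unbounded alternative $|s_j|\to\infty$ (which forces $|n_j|\to\infty$ as well, even though $w_j$ stays bounded). Here I would normalize by $t_j:=1/|s_j|\downarrow 0$, so that $t_js_j\to\bar s$ for some unit vector $\bar s$, while $t_jn_j=t_jw_j-t_js_j-t_je_j\to-\bar s$; since $N^c_{\sdom f}(\bar x)$ is a closed cone, $-\bar s\in N^c_{\sdom f}(\bar x)$. Writing $s_j$ via Carath\'eodory as a convex combination of at most $n+1$ gradients $\nabla f(y)$ with $y\in B_{\delta_j}(\bar x)$, each scaled term $t_j\nabla f(y^j_i)$ is bounded and its limits are, by definition, elements of the horizon set $H=\{\lim_{t_i\downarrow0}t_i\nabla f(x_i)\}$ of Theorem~\ref{thm:main} --- this is exactly where continuity of $f$ relative to $\dom f$ enters, turning the ordinary convergence $y^j_i\to\bar x$ into $f$-attentive convergence. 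Hence $\bar s\in\cl\conv H\subseteq\cl\cone H$, so both $\bar s$ and $-\bar s$ lie in $\cone H+N^c_{\sdom f}(\bar x)$, which embeds at height zero into $N^c_{\sepi f}(\bar x,f(\bar x))$ by (\ref{eqn:norm}) and is therefore pointed, since $f$ is directionally Lipschitzian. This contradicts $\bar s\ne 0$, so the unbounded case is impossible. This pointedness-versus-blow-up tension is the only delicate point; the remaining verifications are routine.
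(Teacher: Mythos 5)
Your reduction to the deterministic sets $S_\delta=\cl\conv\nabla f(B_\delta(\bar x))$ via Theorem~\ref{thm:la}, the identification $\partial_c f(\bar x)=S_0+N^c_{\sdom f}(\bar x)$ from Corollary~\ref{cor:cite}, the bounded case of the contradiction argument, and the entire Lipschitz-on-domain inclusion are all correct. The genuine gap sits exactly at the step you flag as the heart of the matter. Having written $s_j\in S_{\delta_j}$ with $|s_j|\to\infty$, $t_j=1/|s_j|$, as (a limit of) a convex combination $\sum_i\lambda^j_i\nabla f(y^j_i)$, you assert that ``each scaled term $t_j\nabla f(y^j_i)$ is bounded.'' This does not follow: a convex combination controls the sum, not the summands, and cancellation can make individual gradients arbitrarily larger than $|s_j|$. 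Nothing in the Carath\'eodory decomposition itself rules out gradients behaving like $(R^2,R)$ and $(-R^2,R)$, averaging to $(0,R)$, in which case $t_j\nabla f(y^j_i)$ blows up. So the inference $\bar s\in\cl\conv H$ is unsupported as written; the boundedness you need is in fact a consequence of directional Lipschitzness, not a free observation, and proving it requires the same pointedness tension you deploy afterwards.

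The repair is the renormalization trick from the proof of Lemma~\ref{lem:outer}, applied once more at the level of the summands. Set $m_j:=\max_i|t_j\lambda^j_i\nabla f(y^j_i)|$. If $m_j\to\infty$, divide $t_js_j\approx\sum_i\lambda^j_it_j\nabla f(y^j_i)$ by $m_j$: the left side tends to $0$, every summand is now bounded by $1$ with at least one of norm exactly $1$, and each summand carries a coefficient $t_j\lambda^j_i/m_j\le t_j/m_j\to 0$, so (using continuity relative to the domain to get $f$-attentiveness) its limit points lie in $H$. Passing to subsequences yields $0=\sum_i h_i$ with $h_i\in H$ not all zero, hence some $h_{i_0}\neq 0$ with both $\pm h_{i_0}\in\cone H$; since $\cone H+N^c_{\sdom f}(\bar x)$ embeds at height zero into the closed pointed cone $N^c_{\sepi f}(\bar x,f(\bar x))$ by (\ref{eqn:norm}), this is a contradiction. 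Hence $m_j$ stays bounded and your argument then goes through. Note also that even when patched, this is a genuinely different and far longer route than the paper's: there, directional Lipschitzness is used only to conclude that $\partial_c f$ is outer-semicontinuous at $\bar x$, so that the truncated map $x\mapsto\Gamma\cap\partial_c f(x)$ is upper-semicontinuous, which supplies a radius $\delta$ with $\Gamma\cap\partial_c f(\bar x+\delta{\bf B})\subset\Gamma\cap\partial_c f(\bar x)+\epsilon{\bf B}$; the Lipschitz case follows similarly from $\nabla f(B_\delta(\bar x))\subset\partial_c f(\bar x)+\epsilon{\bf B}$, with no blow-up analysis needed at all.
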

\begin{proof}
Since $f$ is directionally Lipschitzian at $\bar{x}$, one can easily see check that the mapping $\partial_c f$ is outer-semicontinuous at $\bar{x}$. Consequently the truncated mapping $x\mapsto \Gamma\cap\partial_c f(x)$ is upper-semicontinuous at $\bar{x}$. We conclude that there exists a real number $\delta > 0$ such that $$\Gamma\cap \partial_c f(\bar{x}+\delta {\bf B})\subset \Gamma\cap \partial_c f(\bar{x})+ \epsilon {\bf B}.$$ Observe that $\cl \bigcup^{\infty}_{k=1} D_k$ is almost surely contained in $\Gamma\cap \cl \conv \partial_c f(\bar{x}+\delta {\bf B})$, and hence the result follows.

Now suppose that $f$ is Lipschitz continuous on its domain. Then, one can verify that that there exists a real number $\delta > 0$ satisfying 
$$\nabla f(B_{\delta}(\bar{x}))\subset \partial_c f(\bar{x})+\epsilon {\bf B}.$$ Since $\cl \bigcup^{\infty}_{k=1} D_k$ is almost surely contained $\cl\conv\big(\nabla f(B_{\delta}(\bar{x}))+N_{\sdom f}(\bar{x})\big)$, the result follows.
\end{proof}

%\begin{remark}
%{\rm
%If the function $f$ of Lemma~\ref{lem:trunc:inner} is, in addition, Lipschitz continuous on its domain, then the inner approximation of that lemma may be taken to be global. That is for any real number $\epsilon >0$, for any sufficiently small sampling radius, we have
%$$\cl \bigcup^{\infty}_{k=1} D_k\subset \partial_c f(\bar{x})+\epsilon {\bf B}, \textrm{ almost surely}.$$}
%\end{remark}

In particular, the distance of a fixed vector $v$ to $\cl \bigcup^{\infty}_{k=1} D_k$ can be made arbitrarily close to the distance of $v$ to $\partial_c f(\bar{x})$.  
\begin{theorem}[Distance approximation]\label{thm:dist}
Consider a proper, stratifiable function $f\colon\R^n\to\overline{\R}$, that is finite at $\bar{x}$. Suppose that $f$ is directionally Lipschitzian at all points of $\dom f$ near $\bar{x}$, and is continuous near $\bar{x}$ relative to the domain of $f$. Then for any vector $v\in\R^n$ and a real $\epsilon > 0$ we have, for any sufficiently small sampling radius,
%$$\partial_c f(\bar{x})\subset \cl\bigcup^{\infty}_{k=1} D_k, \textrm{ almost surely}.$$
$$\dist(v,\cl \bigcup^{\infty}_{k=1} D_k)\geq \dist(v,\partial_c f(\bar{x})) -\epsilon, \textrm{ almost surely},$$
and consequently
$$\lim_{k\to\infty}\big|\dist(v,\cl D_k)- \dist(v,\partial_c f(\bar{x}))\big|<\epsilon, \textrm{ almost surely}.$$
%Consequently
%$$|\dist(v,\cl\bigcup^{\infty}_{i} C_i)-\dist(v,\partial_c f(\bar{x}))| <\delta, \textrm{ almost surely}.$$
\end{theorem}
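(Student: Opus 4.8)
The plan is to combine the outer approximation of Theorem~\ref{theorem:out} with the truncated inner approximation of Lemma~\ref{lem:trunc:inner}, the key point being that the former supplies an \emph{a priori} bound on the distance to the sampled set that is independent of the random outcome. Throughout, write $S:=\cl\bigcup_{k=1}^{\infty}D_k$ and $d:=\dist(v,\partial_c f(\bar{x}))$, noting $d<\infty$ since $\partial_c f(\bar{x})$ is nonempty (see Corollary~\ref{cor:cite}). By Theorem~\ref{theorem:out}, the inclusion $\partial_c f(\bar{x})\subset S$ holds almost surely, and hence $\dist(v,S)\leq d$ on the same full-measure event. In particular, any nearest point of $S$ to $v$ lies inside the \emph{fixed, deterministic} compact set $\Gamma:=v+d\,\overline{{\bf B}}$.

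First I would establish the displayed lower bound. With $\Gamma$ fixed as above, apply Lemma~\ref{lem:trunc:inner} with this compact set and the tolerance $\epsilon$: for all sufficiently small sampling radii we have, almost surely,
$$\Gamma\cap S\subset \big(\Gamma\cap\partial_c f(\bar{x})\big)+\epsilon{\bf B}\subset \partial_c f(\bar{x})+\epsilon{\bf B}.$$
Working on the almost-sure event where this inclusion and $\dist(v,S)\leq d$ both hold, pick (since $S$ is closed and nonempty) a point $u\in S$ with $|v-u|=\dist(v,S)\leq d$, so that $u\in\Gamma\cap S$. The inclusion then yields some $w\in\partial_c f(\bar{x})$ with $|u-w|<\epsilon$, whence
$$d\leq |v-w|\leq |v-u|+|u-w|< \dist(v,S)+\epsilon.$$
Rearranging gives $\dist(v,S)\geq d-\epsilon$ (indeed $>d-\epsilon$, as $\epsilon{\bf B}$ is open), which is the first assertion.

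For the limiting statement, I would record the elementary monotone-convergence fact that, since $C_k\subset C_{k+1}$ forces $D_k\subset D_{k+1}$, the closed sets $\cl D_k$ increase with $\dist(v,\cl D_k)=\inf_k\dist(v,\cl D_k)$ in the limit, and $\inf_k\dist(v,\cl D_k)=\dist(v,\cl\bigcup_k\cl D_k)=\dist(v,S)$; thus the non-increasing sequence $\dist(v,\cl D_k)$ converges to $\dist(v,S)$. Combining the strict inequality $d-\epsilon<\dist(v,S)$ of the previous paragraph with $\dist(v,S)\leq d$, we obtain, almost surely,
$$\lim_{k\to\infty}\big|\dist(v,\cl D_k)-d\big|=\big|\dist(v,S)-d\big|=d-\dist(v,S)<\epsilon,$$
as required.

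The only genuine obstacle is a potential circularity: the truncating compact set in Lemma~\ref{lem:trunc:inner} must be chosen \emph{before} the sampling radius is fixed, yet a nearest point of $S$ to $v$ is a random object whose location is not known in advance. This is precisely what the outer approximation resolves, since the bound $\dist(v,S)\leq d$ confines every candidate nearest point to the deterministic ball $\Gamma=v+d\,\overline{{\bf B}}$; once $\Gamma$ is fixed independently of the outcome, Lemma~\ref{lem:trunc:inner} applies directly and the remaining estimates are routine.
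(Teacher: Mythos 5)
Your proof is correct and takes essentially the same route as the paper: the paper also truncates with the ball $\Gamma=\overline{B}_{\gamma}(v)$, $\gamma=\dist(v,\partial_c f(\bar{x}))$, applies Lemma~\ref{lem:trunc:inner} to that fixed compact set, and combines the resulting inclusion with Theorem~\ref{theorem:out}. You have simply made explicit the details the paper compresses into ``the result readily follows'' --- the nearest-point estimate and the monotone convergence of $\dist(v,\cl D_k)$ to $\dist(v,\cl\bigcup_k D_k)$ --- along with the observation (implicit in the paper as well) that nonemptiness of $\partial_c f(\bar{x})$ is needed for the choice of $\Gamma$ to make sense.
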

\begin{proof} Let $\gamma:= \dist(v,\partial_c f(\bar{x}))$ and $\Gamma=\overline{B}_{\gamma}(v)$. Then by Lemma~\ref{lem:trunc:inner}, we have for any sufficiently small radius,  
$$\Gamma \cap \cl \bigcup^{\infty}_{k=1} D_k\subset \Gamma \cap \partial_c f(\bar{x})+\epsilon {\bf B}, \textrm{ almost surely}.$$ The result readily follows from the inclusion above and Theorem~\ref{theorem:out}.
\end{proof}

A natural test for optimality, using the sampling scheme, is to determine whether the inclusion $$0\in D_k,$$ holds. According to Theorem~\ref{theorem:out}, if $\bar{x}$
is a Clarke critical point, then $\dist(0,D_k)\to 0$ almost surely. On the other hand, if $\bar{x}$ is not Clarke-critical, then by Theorem~\ref{thm:dist}, for any sufficiently small radius, we have $\lim_{k\to\infty} \dist(0,\cl D_k) >0$, and hence the test $0\in D_k$ will not generate a false positive.

From a computational point of view, there is a certain difficulty we have not addressed. Suppose that for each radius $\delta$, the trial points $x_i$ are sampled with uniform distribution on the ball $B_{\delta}(\bar{x})$. One could worry that as the sampling radius decreases to zero, the proportion of points $x_i$ discarded (those that lie outside of the domain) to those that are in the domain could be arbitrarily large, with positive probability. This, for instance, could happen if the domain was the set $\{(x,y)\in{\R^2}:|y|\leq x^2, 0\leq x\}$. This pathology, however, does not occur in the directionally Lipschitzian setting. 

\begin{proposition}[Domain of a continuous directionally Lipschitzian function]
Consider a proper function $f\colon\R^n\to\overline{\R}$, that is finite at $\bar{x}$. Suppose that $f$ is directionally Lipschitzian at $\bar{x}$ and is continuous at $\bar{x}$, relative to $\dom f$. Then the domain of $f$ is epi-Lipschitzian at $\bar{x}$.
\end{proposition}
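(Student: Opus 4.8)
The plan is to verify the two requirements in the definition of an epi-Lipschitzian set at $\bar{x}$: that $\dom f$ is locally closed at $\bar{x}$, and that the limiting normal cone $N_{\sdom f}(\bar{x})$ is pointed. The engine for both is the local product structure of $\epi f$ at points lying strictly above the graph, exactly as exploited in the proof of Proposition~\ref{prop:norm}(\ref{it:2}).

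First I would fix any height $r>f(\bar{x})$ and examine $\epi f$ near $(\bar{x},r)$. Since $f$ is upper-semicontinuous at $\bar{x}$ relative to $\dom f$ (a consequence of continuity relative to the domain), choosing $\epsilon>0$ small enough that $f(\bar{x})<r-\epsilon$ forces $f(x)<r-\epsilon$ for all $x\in\dom f$ near $\bar{x}$; hence there is a neighborhood $U$ of $\bar{x}$ with
$$\epi f\cap\big(U\times(r-\epsilon,r+\epsilon)\big)=(\dom f\cap U)\times(r-\epsilon,r+\epsilon).$$
Local lower-semicontinuity of $f$ at $\bar{x}$ (part of being directionally Lipschitzian) guarantees that $\epi f$ is locally closed at $(\bar{x},r)$; since the interval factor is open, the displayed product identity then forces $\dom f\cap U$ to be closed in $U$. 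This yields local closedness of $\dom f$ at $\bar{x}$.

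With local closedness in hand, the product rule for limiting normal cones \cite[Proposition 6.41]{VA} applied to the identity above gives $N_{\sepi f}(\bar{x},r)=N_{\sdom f}(\bar{x})\times\{0\}$ for every $r>f(\bar{x})$. Letting $r\downarrow f(\bar{x})$ and invoking outer-semicontinuity of the limiting normal cone mapping of the locally closed set $\epi f$ at $(\bar{x},f(\bar{x}))$, I would conclude
$$N_{\sdom f}(\bar{x})\times\{0\}\subset N_{\sepi f}(\bar{x},f(\bar{x})),$$
which by the definition of the horizon subdifferential says precisely that $N_{\sdom f}(\bar{x})\subset\partial^{\infty} f(\bar{x})$. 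Since $f$ is directionally Lipschitzian at $\bar{x}$, the cone $\partial^{\infty} f(\bar{x})$ is pointed, and therefore so is its subset $N_{\sdom f}(\bar{x})$. Combined with the local closedness established above, this shows that $\dom f$ is epi-Lipschitzian at $\bar{x}$.

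The only mildly delicate step is the bookkeeping that converts the product identity into local closedness of $\dom f$ and into the normal-cone product formula; everything else is a direct limiting argument. In particular, neither stratifiability nor vertical continuity is needed here --- only the interplay between continuity relative to the domain, which supplies the product structure, and pointedness of $\partial^{\infty} f(\bar{x})$, which passes to the smaller cone $N_{\sdom f}(\bar{x})$.
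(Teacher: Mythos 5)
Your proof is correct and takes essentially the same route as the paper's own: local closedness of $\dom f$, the product formula $N_{\sepi f}(\bar{x},r)=N_{\sdom f}(\bar{x})\times\{0\}$ for $r>f(\bar{x})$ coming from the local product structure of the epigraph, the limit $r\downarrow f(\bar{x})$ giving $N_{\sdom f}(\bar{x})\times\{0\}\subset N_{\sepi f}(\bar{x},f(\bar{x}))$, i.e.\ $N_{\sdom f}(\bar{x})\subset\partial^{\infty}f(\bar{x})$, and then pointedness. One small caveat: local closedness of $\epi f$ at $(\bar{x},r)$ does not follow from local lower-semicontinuity at $\bar{x}$ alone (that only controls the epigraph near $(\bar{x},f(\bar{x}))$, as a function equal to $0$ at $\bar x$, to $1$ on nearby rationals, and $+\infty$ elsewhere shows); you also need the continuity relative to $\dom f$ to carry the closedness up to height $r$ --- but since that hypothesis is in play one sentence earlier, the argument stands.
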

\begin{proof}
By continuity of $f$, the domain of $f$ is locally closed near $\bar{x}$. Now observe that for each real number $r$ satisfying $r> f(\bar{x})$, we have $N_{\sepi f}(\bar{x},r)=N_{\sdom f}(\bar{x})\times\{0\}$. Consequently the inclusion $$N_{\sdom f}(\bar{x})\times\{0\}\subset N_{\sepi f}(\bar{x},f(\bar{x})),$$ holds. We conclude that the normal cone $N_{\sdom f}(\bar{x})$ is pointed, and hence the domain of $f$ is an epi-Lipschitzian set at $\bar{x}$.
\end{proof}

The {\em lower Lebesgue density} of a set $Q\subset\R^n$ at a point $\bar{x}$ is
$$\mbox{\rm Dens}^{-}(Q,\bar{x}):=\lf_{\delta\to 0} \frac{\mu(Q\cap B_{\delta}(\bar{x}))}{\mu(B_{\delta}(\bar{x}))},$$ where we recall that $\mu$ denotes the Lebesgue measure on $\R^n$.  

\begin{proposition}[Density of epi-Lipschitzian sets]
Consider a set $Q\subset\R^n$ that is epi-Lipschitzian at a point $\bar{x}$. Then we have 
$$\mbox{\rm Dens}^{-}(Q,\bar{x}) > 0.$$
\end{proposition}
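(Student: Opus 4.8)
The plan is to reduce the problem to Rockafellar's epigraph description of epi-Lipschitzian sets and then to exhibit an explicit solid cone contained in $Q$ whose relative volume in every small ball is a fixed positive constant.

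First I would invoke Rockafellar's structural theorem, recalled in Section~\ref{sec:prelim} (from \cite[Section~4]{Clarke_roc}): since $Q$ is epi-Lipschitzian at $\bar{x}$, there is an orthogonal change of coordinates after which $Q$ coincides, on a neighborhood of $\bar{x}$, with the epigraph of a Lipschitz continuous function $h\colon\R^{n-1}\to\R$. Because Lebesgue measure, and hence the lower density $\mbox{\rm Dens}^{-}$, is invariant under rotations and translations, I may assume without loss of generality that $\bar{x}=0$ and that $Q$ agrees with $\epi h$ on some ball $B_{\rho}(0)$, writing points as $(y,t)\in\R^{n-1}\times\R$. Taking $\kappa$ to be a Lipschitz constant for $h$ and noting that $0=(0,0)\in Q=\epi h$ forces $h(0)\le 0$, the key inequality $h(y)\le h(0)+\kappa|y|\le \kappa|y|$ becomes available.

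Next I would introduce the solid cone
$$K:=\{(y,t)\in\R^{n-1}\times\R : t\ge \kappa|y|\}$$
and verify the one-sided containment $K\cap B_{\delta}(0)\subset Q$ for every $\delta\le\rho$. Indeed, if $(y,t)\in K$ then $t\ge\kappa|y|\ge h(y)$, so $(y,t)\in\epi h$; intersecting with $B_{\delta}(0)\subset B_\rho(0)$ places the point in $Q$. Since $K$ is a cone with nonempty interior, the scaling $x\mapsto x/\delta$ fixes $K$ and maps $B_{\delta}(0)$ to $B_1(0)$, so the ratio $\mu(K\cap B_{\delta}(0))/\mu(B_{\delta}(0))$ equals the $\delta$-independent positive constant $c:=\mu(K\cap B_1(0))/\mu(B_1(0))>0$. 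Combining the containment with monotonicity of measure then gives, for all $\delta\le\rho$,
$$\mbox{\rm Dens}^{-}(Q,0)=\lf_{\delta\to 0}\frac{\mu(Q\cap B_{\delta}(0))}{\mu(B_{\delta}(0))}\ge \lf_{\delta\to 0}\frac{\mu(K\cap B_{\delta}(0))}{\mu(B_{\delta}(0))}=c>0,$$
which is the desired conclusion.

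The only substantive ingredient is Rockafellar's characterization; once the local epigraph picture is secured, the estimate is elementary precisely because it is one-sided, requiring merely a fixed cone inside $Q$ rather than a two-sided control of its volume. The one point to handle with care is that $\bar{x}$ may lie either in the interior or on the boundary of $Q$, but the argument is insensitive to this distinction, since the cone inclusion rests only on the Lipschitz bound $h(y)\le h(0)+\kappa|y|$ and not on any assumption about the position of $0$ relative to the graph of $h$.
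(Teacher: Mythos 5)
Your proof is correct and follows essentially the same route as the paper's: both reduce via Rockafellar's structural theorem to the local epigraph of a Lipschitz function $h$ with $h(y)\le\kappa|y|$, and then observe that the resulting epigraph contains a solid convex cone, which has positive Lebesgue density. Your write-up merely makes explicit the details (the cone $K=\{(y,t):t\ge\kappa|y|\}$, the scaling argument for its density, and the point $h(0)\le 0$) that the paper leaves to the reader.
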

\begin{proof}
Since $Q$ is epi-Lipschitzian at $\bar{x}$, we may assume that $Q$ is an epigraph of a Lipschitz continuous function $f\colon\R^{n-1}\to\R$, with $\bar{x}=(0,0)$ (See for example \cite[Section 4]{Clarke_roc}). We deduce $f(x)\leq \kappa |x|$, for some constant $\kappa >0$ and for all points in $x\in\R^{n-1}$. Consequently, $\epi f$ contains a convex cone with nonempty interior, a set that has strictly positive Lebesgue density. The result follows.
\end{proof}

The sampling scheme outlined in this section is effective whenever gradients are cheap to compute and the normal cone to the domain at the point of interest is known in advance. Now suppose that the normal cone is unknown to us, but nevertheless we can test whether a point is in the domain and we can project onto the domain easily. Then a slight modification of the sampling scheme can still be effective at approximating the Clarke subdifferential. 
Namely during our sampling scheme, rather than discarding points lying outside of the domain, we may use these points to approximate the normal cone by projecting onto the domain. See \cite{har_lew} for more details. Using this information along with the sampled gradients, we may still hope to approximate the whole Clarke subdifferential effectively. We, however, do not pursue this further in our work.

%\begin{theorem}[False positives]
%Consider a proper stratifiable function $f\colon\R^n\to\overline{\R}$, continuous on its domain, and directionally Lipschitzian at a point $\bar{x}$. If $0\notin\partial_c f(\bar{x})$, then for any sufficiently small sampling radius, we have 
%$$\lim_{i\to\infty} \dist(0,C_i) > 0, \textrm{ almost surely}.$$  
%\end{theorem}
%
%\begin{corollary}[Inner approximation]
%Consider a proper stratifiable function $f\colon\R^n\to\overline{\R}$ that is locally Lipschitz continuous at a point $\bar{x}$ relative to $\dom f$. Suppose furthermore that $\dom f$ is an epi-Lipschitzian set at $\bar{x}$. Then for any real $\delta > 0$ we have, for any sufficiently small sampling radius, 
%$$\cl \bigcup^{\infty}_{i=1} C_i\subset \partial_c f(\bar{x})+ \delta \overline{{\bf B}}, \textrm{ almost surely}.$$
%\end{corollary}

\section{Local Dimension of semi-algebraic subdifferential graphs.}\label{sec:loc_dim}
In this section, we apply our methods to study the size of subdifferential graphs of semi-algebraic functions. We first establish notation and record some preliminary facts from semi-algebraic geometry.
\subsection{Semi-algebraic Geometry.}
A {\em semi-algebraic} set $S\subset\R^n$ is a finite union of sets of the form $$\{x\in \R^n: P_1(x)=0,\ldots,P_k(x)=0, Q_1(x)<0,\ldots, Q_l(x)<0\},$$ where $P_1,\ldots,P_k$ and $Q_1,\ldots,Q_l$ are polynomials in $n$ variables. In other words, $S$ is a union of finitely many sets, each defined by finitely many polynomial equalities and inequalities. A map $F\colon\R^n\rightrightarrows\R^m$ is said to be {\em semi-algebraic} if $\mbox{\rm gph}\, F\subset\R^{n+m}$ is a semi-algebraic set. Semi-algebraic sets enjoy many nice structural properties. Unless otherwise stated, we follow the notation of \cite{Coste-semi} and \cite{DM}. 

A fundamental fact about semi-algebraic sets is provided by the Tarski-Seidenberg Theorem \cite[Theorem 2.3]{Coste-semi}. Roughly speaking, it states that a linear projection of a semi-algebraic set remains semi-algebraic. From this result, it follows that a great many constructions preserve semi-algebraicity. In particular, for a semi-algebraic function $f\colon\R^n\to\overline{\R}$, the set-valued mappings $\partial_P f$, $\hat{\partial} f$, $\partial f$, and $\partial_c f$ are semi-algebraic. See for example \cite[Proposition 3.1]{tame_opt}. 

\begin{definition}[Compatibility]
{\rm Given finite collections $\{B_i\}$ and $\{C_j\}$ of subsets of $\R^n$, we say that $\{B_i\}$ is {\em compatible} with $\{C_j\}$ if for all $B_i$ and $C_j$, either $B_i\cap C_j=\emptyset$ or $B_i\subset C_j$.}
\end{definition}

\begin{definition}[Stratifications]\label{defn:whit}
{\rm Consider a set $Q$ in $\R^n$. A {\em stratification} of $Q$ is a finite partition of $Q$ into disjoint, connected, manifolds $M_i$ (called strata) with the property that for each index $i$, the intersection of the closure of $M_i$ with $Q$ is the union of some $M_j$'s.} 
\end{definition}

Remarkably, semi-algebraic sets always admit stratifications. Indeed, the following stronger result holds.
\begin{proposition}\label{prop:smooth}\cite[Theorem 4.8]{DM}
Consider a semi-algebraic function $f\colon\R^n\to\overline{\R}$. Then there exists a stratification $\mathcal{A}$ of $\dom f$ so that $f$ is smooth on each stratum $M_i\in \mathcal{A}$. Furthermore, if $\mathcal{B}$ is some other stratification of $\dom f$, then we can ensure that $\mathcal{A}$ is compatible with $\mathcal{B}$.
\end{proposition}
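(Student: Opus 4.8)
The plan is to deduce Proposition~\ref{prop:smooth} from the semi-algebraic $C^2$ Cell Decomposition Theorem and then to upgrade the resulting partition to a genuine stratification by an induction on dimension that installs the frontier condition of Definition~\ref{defn:whit}. Since $f$ is semi-algebraic, both $\dom f$ and $\gph f$ are semi-algebraic, so I would invoke the $C^2$ Cell Decomposition Theorem of \cite{DM}: given any finite family of semi-algebraic subsets of $\R^n$ and any finite family of semi-algebraic functions, there is a finite partition of $\R^n$ into connected $C^2$-cells compatible with each given set and on each of which every given function is $C^2$. Applying this to the family $\{\dom f\}\cup\mathcal{B}$ together with the function $f$ yields a finite partition $\mathcal{C}$ of $\dom f$ into connected $C^2$-manifolds, compatible with $\mathcal{B}$, on each of which $f$ is smooth. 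At this point every requirement of the proposition is met \emph{except} the frontier condition; note that connectedness and the manifold property of strata are automatic, since cells are connected $C^2$-manifolds.

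To install the frontier condition, I would prove, by induction on $d=\dim Q$, the slightly more general statement that any semi-algebraic set $Q$ admits a stratification with the frontier condition, compatible with a prescribed finite family and making a prescribed semi-algebraic function smooth on strata. The geometric engine is the standard dimension drop for semi-algebraic sets, namely $\dim\big((\cl A)\setminus A\big)<\dim A$. Given the partition $\mathcal{C}$ above, set $\Sigma:=\bigcup_{C\in\mathcal{C}}\big((\cl C)\setminus C\big)\cap Q$ and $Z:=\Sigma\cup\bigcup\{C\in\mathcal{C}:\dim C<d\}$, so that $Z\subset Q$ is semi-algebraic with $\dim Z<d$. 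For each $d$-dimensional cell $C$, the set $C\setminus\Sigma$ is open in $C$, hence a $C^2$-manifold of dimension $d$ on which $f$ is smooth, and its finitely many connected components will serve as the top strata. I would then apply the induction hypothesis to $Z$, compatibly with the finite family consisting of $\mathcal{C}$, of $\mathcal{B}$, and of the closures of all these top strata, to obtain a stratification $\mathcal{S}_Z$ of $Z$; together with the top strata this forms the desired $\mathcal{A}$.

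The crux — and the main obstacle — is exactly this second step: checking that $\mathcal{A}$ satisfies the frontier condition and that the induction is well-founded. For a top stratum $M\subset C\setminus\Sigma$ one verifies that $(\cl M\setminus M)\cap Q\subset Z$, since frontier points lying outside $C$ land in $\Sigma$ while distinct components of the open set $C\setminus\Sigma$ are mutually separated; and because the closures of the top strata were included in the compatibility family for $Z$, this frontier is a union of strata of $\mathcal{S}_Z$, while inside $Z$ the frontier condition holds by the induction hypothesis. Well-foundedness rests on the strict dimension drop, which guarantees $\dim Z<d$ at every stage, and the bookkeeping need only carry compatibility with $\mathcal{B}$ and smoothness of $f$ along — automatic, since each stage only refines. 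Alternatively, one can sidestep the explicit refinement by invoking the existence of semi-algebraic Whitney stratifications compatible with a prescribed finite family, for which the frontier condition holds automatically for connected strata, using cell decomposition only to supply smoothness of $f$ on the strata.
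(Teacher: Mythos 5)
The paper itself offers no proof of this proposition: it is imported wholesale from the literature as \cite[Theorem 4.8]{DM} and used as a black box. Your proposal reconstructs, in the semi-algebraic case, essentially the standard proof of that cited theorem, and it is correct. The $C^2$ cell decomposition theorem applied to $\{\dom f\}\cup\mathcal{B}$ and $f$ gives the partition $\mathcal{C}$ with smoothness and compatibility, and your induction on dimension, powered by the frontier dimension drop $\dim\big((\cl A)\setminus A\big)<\dim A$, correctly installs the frontier condition of Definition~\ref{defn:whit}; in particular, placing the closures of the top strata into the compatibility family handed to the induction is exactly the right bookkeeping. Three points you gloss over are worth making explicit, though all are routine: (i) the openness of $C\setminus\Sigma$ in $C$ uses that cells are locally closed, so that each $(\cl C')\setminus C'$, and hence $\Sigma$, is closed in $Q$ (alternatively, replace $\Sigma$ by $\cl\Sigma\cap Q$, which still has dimension $<d$); (ii) the frontier condition for the \emph{lower} strata also requires that closures of strata of $\mathcal{S}_Z$ avoid the top strata --- otherwise a top stratum would have to be contained in such a closure, which is impossible by dimension --- and this follows once one checks that $Z$ is closed in $Q$ (true, since $\cl C'\cap Q\subset C'\cup\Sigma$ for every lower-dimensional cell $C'$); (iii) finiteness of $\mathcal{A}$ rests on the fact that a semi-algebraic set has finitely many connected components, each semi-algebraic, which is what makes the components of $C\setminus\Sigma$ legitimate strata. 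As for what each route buys: the paper's citation is shorter and inherits the more general o-minimal statement of \cite{DM}, with extra regularity (e.g.\ Whitney conditions) available for free --- your closing alternative via Whitney stratifications is the same observation; your argument is self-contained and makes clear that, in the semi-algebraic setting, nothing beyond cell decomposition and the dimension drop of frontiers is actually needed.
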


%All that distinguishes a stratification from a simple partition is the requirement that the strata fit together in a topologically nice way. One can impose more stringent metric requirements, such as a Whitney property for instance. See \cite{DM} for more details. 

\begin{definition}[Dimension]
{\rm Let $Q\subset\R^n$ be a nonempty semi-algebraic set. Then the {\em dimension} of $Q$, denoted $\dim Q$, is the maximal dimension of a stratum in any stratification of $Q$. We adopt the convention that $\dim \emptyset=-\infty$.}  
\end{definition}

It can be easily shown that the dimension does not depend on the particular stratification. Observe that the dimension of a semi-algebraic set only depends on the maximal dimensional manifold in a stratification. Hence, dimension is a crude measure of the size of the semi-algebraic set. This motivates a localized notion of dimension.

\begin{definition}[Local dimension]
{\rm Consider a semi-algebraic set $Q\subset \R^n$ and a point $\bar{x}\in Q$. We let the {\em local dimension} of $Q$ at $\bar{x}$ be $$\dim_Q(\bar{x}):=\inf_{\epsilon>0}\dim (Q\cap B_{\epsilon}(\bar{x})).$$ It is not difficult to see that there exists a real number $\bar{\epsilon}>0$ such that for every real number $0<\epsilon<\bar{\epsilon}$, we have $\dim_Q(\bar{x})=\dim (Q\cap B_{\epsilon}(\bar{x}))$.
}
\end{definition}

There is a straightforward connection between local dimension and dimension of strata. This is the content of the following proposition.
\begin{proposition}\cite[Proposition 3.4]{small}\label{prop:id}
Consider a semi-algebraic set $Q\subset\R^n$ and a point $\bar{x}\in Q$. Let $\{M_i\}$ be any stratification of $Q$. Then we have the identity $$\dim_Q(\bar{x})=\max_i\{\dim M_i: \bar{x}\in\cl M_i\}.$$
\end{proposition}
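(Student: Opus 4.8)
The plan is to prove the two inequalities separately, writing $d:=\max_i\{\dim M_i:\bar{x}\in\cl M_i\}$. This maximum is taken over a nonempty index set, since $\bar{x}\in Q=\bigcup_i M_i$ forces $\bar{x}$ into some stratum $M_{i_0}$, whence $\bar{x}\in M_{i_0}\subset\cl M_{i_0}$. I will use two standard facts from semi-algebraic geometry: the dimension of a finite union of semi-algebraic sets equals the maximum of their dimensions, and a nonempty subset of a $d$-dimensional manifold that is open in the subspace topology has dimension exactly $d$.

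For the lower bound $\dim_Q(\bar{x})\geq d$, I would select a stratum $M_i$ attaining the maximum, so that $\dim M_i=d$ and $\bar{x}\in\cl M_i$. Because $\bar{x}$ lies in the closure of $M_i$, every ball $B_{\epsilon}(\bar{x})$ meets $M_i$, and the intersection $M_i\cap B_{\epsilon}(\bar{x})$ is a nonempty subset of $M_i$ that is open in $M_i$; hence it has dimension $d$. Consequently
$$\dim\big(Q\cap B_{\epsilon}(\bar{x})\big)\geq \dim\big(M_i\cap B_{\epsilon}(\bar{x})\big)=d,$$
for every $\epsilon>0$, and taking the infimum over $\epsilon$ yields $\dim_Q(\bar{x})\geq d$.

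For the upper bound $\dim_Q(\bar{x})\leq d$, the key is to exploit the finiteness of the stratification to produce a single radius isolating $\bar{x}$ from the irrelevant strata. Let $J=\{j:\bar{x}\notin\cl M_j\}$. For each $j\in J$ the set $\cl M_j$ is closed and omits $\bar{x}$, so $\dist(\bar{x},\cl M_j)>0$; setting $\epsilon:=\min_{j\in J}\dist(\bar{x},\cl M_j)$ (which is positive as a minimum over a finite set, and may be taken arbitrary if $J=\emptyset$) guarantees $B_{\epsilon}(\bar{x})\cap M_j=\emptyset$ for all $j\in J$. Therefore
$$Q\cap B_{\epsilon}(\bar{x})=\bigcup_{i\notin J}\big(M_i\cap B_{\epsilon}(\bar{x})\big),$$
a finite union in which each term has dimension at most $\dim M_i\leq d$. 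By the union fact, $\dim\big(Q\cap B_{\epsilon}(\bar{x})\big)\leq d$, and hence $\dim_Q(\bar{x})\leq d$. Combining the two inequalities gives the claimed identity.

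I do not anticipate a serious obstacle here; the argument is essentially bookkeeping once the two standard semi-algebraic facts are invoked. The only point deserving care is the lower bound, where one must be certain that intersecting a $d$-dimensional stratum with an arbitrarily small ball centered at a closure point still leaves a set of full dimension $d$ — this is precisely where openness in the subspace topology of the manifold is used, and it is what forces the local dimension not to drop below $d$.
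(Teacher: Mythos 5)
Your proof is correct. Note that the paper itself offers no proof of Proposition~\ref{prop:id} --- it is quoted from \cite{small} --- and your two-inequality argument (a stratum with $\bar{x}\in\cl M_i$ meets every ball in a relatively open, hence full-dimensional, subset; finiteness of the stratification yields a single radius avoiding all strata whose closures miss $\bar{x}$) is exactly the standard argument behind the cited result, requiring only monotonicity of semi-algebraic dimension and its behavior under finite unions, and not even the frontier condition.
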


\begin{definition}[Maximal strata]
{\rm Given a stratification $\{M_i\}$ of a semi-algebraic set $Q\subset\R^n$, a stratum $M$ is {\em maximal} if it is not contained in the closure of any other stratum.}
\end{definition}

Using the defining property of a stratification, we can equivalently say that given a stratification $\{M_i\}$ of a semi-algebraic set $Q\subset\R^n$, a stratum $M_i$ is maximal if and only if it is disjoint from the closure of any other stratum.

\begin{proposition}\cite[Proposition 3.7]{small}\label{prop:loc_max}
Consider a stratification $\{M_i\}$ of a semi-algebraic set $Q\subset\R^n$. Then given any point $\bar{x}\in Q$, there exists a maximal stratum $M$ satisfying $\bar{x}\in \cl M$ and $\dim M=\dim_Q(\bar{x})$. 
\end{proposition}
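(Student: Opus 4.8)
The plan is to produce the desired stratum directly from Proposition~\ref{prop:id} and then show that it is \emph{automatically} maximal, so that no ascent through the partial order $\preceq$ is actually needed.

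First I would apply Proposition~\ref{prop:id}, which expresses $\dim_Q(\bar{x})$ as $\max_i\{\dim M_i:\bar{x}\in\cl M_i\}$, to extract a single stratum $M$ satisfying $\bar{x}\in\cl M$ and $\dim M=\dim_Q(\bar{x})$. The two metric requirements on $M$ are then in hand, and the entire content of the proposition reduces to verifying that this $M$ is maximal.

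To establish maximality, I would argue by contradiction: suppose $M\subset\cl M'$ for some stratum $M'\neq M$. On the one hand, since $\bar{x}\in\cl M\subset\cl M'$, Proposition~\ref{prop:id} forces $\dim M'\leq\dim_Q(\bar{x})=\dim M$. On the other hand, the strata are pairwise disjoint, so in fact $M\subset\cl M'\setminus M'$; invoking the standard semi-algebraic fact that the frontier of a semi-algebraic set has strictly smaller dimension, namely $\dim(\cl M'\setminus M')<\dim M'$, we obtain $\dim M\leq\dim(\cl M'\setminus M')<\dim M'$. Chaining these inequalities yields $\dim M<\dim M'\leq\dim M$, a contradiction. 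Hence $M$ is contained in the closure of no other stratum, that is, $M$ is maximal, completing the proof.

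The main, and indeed essentially the only, nontrivial input is the strict dimensional drop across the frontier, $\dim(\cl M'\setminus M')<\dim M'$; everything else is bookkeeping with Proposition~\ref{prop:id} and the definition of a maximal stratum. I expect this frontier-dimension property --- a cornerstone of semi-algebraic geometry --- to be the step one must be careful to cite correctly, since it is precisely what prevents a lower-dimensional stratum from hiding inside the closure of a strictly higher-dimensional one while still realizing the local dimension at $\bar{x}$.
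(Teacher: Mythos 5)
Your proof is correct, and there is in fact no internal argument to compare it against: the paper states Proposition~\ref{prop:loc_max} as a quotation of \cite[Proposition 3.7]{small} and gives no proof. Your derivation from Proposition~\ref{prop:id} is legitimate and pleasantly short: any stratum realizing the maximum in Proposition~\ref{prop:id} is automatically maximal, since an inclusion $M\subset\cl M'$ with $M'\neq M$ puts $M$ inside the frontier $\cl M'\setminus M'$ (by disjointness of strata), whence $\dim M<\dim M'$, while $\bar{x}\in\cl M\subset\cl M'$ forces $\dim M'\leq \dim_Q(\bar{x})=\dim M$ --- a contradiction. The one point you should make explicit is that the frontier-dimension inequality $\dim(\cl M'\setminus M')<\dim M'$ (see \cite[Proposition 3.16]{Coste-semi}) requires $M'$ to be \emph{semi-algebraic}: Definition~\ref{defn:whit} as written asks only for connected manifolds, and for genuinely non-semi-algebraic strata the inequality can fail (a smooth curve of topologist's-sine type has a frontier of dimension equal to that of the curve). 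So your argument, like the cited source, must be read under the standing convention that stratifications of semi-algebraic sets have semi-algebraic strata --- the convention in force in \cite{small} and satisfied by every stratification the paper actually produces (Propositions~\ref{prop:smooth} and~\ref{prop:cont}). A minor bonus of your route: beyond its appeal to Proposition~\ref{prop:id}, it never invokes the frontier condition of the stratification at all, only pairwise disjointness and semi-algebraicity of the strata.
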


Semi-algebraic methods have recently found great uses in set-valued analysis. See for example \cite{Pang, dim, tame_opt, ioffe_tame, ioffe_strat}. A fact that will be particularly useful for us is that semi-algebraic set-valued mappings are ``generically'' inner-semicontinuous.
\begin{proposition}\label{prop:cont}\cite[Proposition 2.28, 2.30]{dim}
Consider a semi-algebraic, set-valued mapping $G\colon\R^n\rightrightarrows\R^m$. Then there exists a stratification of $\dom G$ into finitely many semi-algebraic manifolds $\{M_i\}$ such that on each stratum $M_i$, the mapping $G$ is inner-semicontinuous and the dimension of the images $F(x)$ is constant. If in addition $F$ is closed-valued, then we can ensure that the restriction $G\big|_{M_i}$ is also outer-semicontinuous for each index $i$.
\end{proposition}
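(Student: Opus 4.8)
The plan is to reduce everything to the semi-algebraic triviality theorem (Hardt's theorem; see \cite{Coste-semi}) applied to the canonical projection $\pi\colon\gph G\to\dom G$, and then to refine the resulting partition into a genuine stratification using Proposition~\ref{prop:smooth}. The guiding observation is that all three desired properties---constancy of the image dimension, inner-semicontinuity, and outer-semicontinuity---are inherited by semi-algebraic subsets: if $G$ is inner-semicontinuous (respectively, outer-semicontinuous) relative to a set $P$ at each of its points, then the same holds relative to any subset of $P$, since a sequence lying in the subset also lies in $P$; and the image dimension, being constant on $P$, is constant on any subset. Thus it suffices to produce a finite semi-algebraic partition of $\dom G$ into pieces enjoying these properties, and only at the very end to pass to a compatible stratification into connected smooth manifolds satisfying the frontier condition of Definition~\ref{defn:whit}.

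First I would apply triviality to $\pi$. This yields a finite semi-algebraic partition $\dom G=\bigsqcup_k N_k$ together with semi-algebraic homeomorphisms $h_k\colon\pi^{-1}(N_k)\to N_k\times F_k$ commuting with the projection onto $N_k$, where each $F_k\subset\R^m$ is a fixed semi-algebraic fiber model. Two of the three properties follow at once over each $N_k$. The fibers $G(x)$ are all semi-algebraically homeomorphic to $F_k$, so their common semi-algebraic dimension $\dim G(x)=\dim F_k$ is constant on $N_k$; this is the dimension statement. For inner-semicontinuity relative to $N_k$, fix $x_0\in N_k$, a vector $v_0\in G(x_0)$, and a sequence $x_i\to x_0$ in $N_k$; writing $h_k(x_0,v_0)=(x_0,f_0)$ with $f_0\in F_k$ and setting $(x_i,v_i):=h_k^{-1}(x_i,f_0)$, we have $v_i\in G(x_i)$, and continuity of $h_k^{-1}$ forces $(x_i,v_i)\to(x_0,v_0)$, hence $v_i\to v_0$.

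The remaining, and hardest, point is outer-semicontinuity in the closed-valued case. I would first record the clean reformulation: $G$ is outer-semicontinuous relative to a set $P$ precisely when the restricted graph $\gph G\cap(P\times\R^m)$ is closed in $P\times\R^m$. The danger is that the trivialization $h_k$ need not be proper: a sequence $x_i\to x_0$ in $N_k$ may carry fiber coordinates $f_i\in F_k$ that escape every compact subset of $F_k$ while the corresponding $v_i\in G(x_i)$ nevertheless converge to a finite $v_0$. Such a limit $v_0$ is exactly a point of $(\cl\gph G)_{x_0}\setminus G(x_0)$, and closed-valuedness of $G$ guarantees only that no such defect arises \emph{within} a single fiber (since $G(x_0)=\cl G(x_0)$), not that none arises from cross-fiber limits. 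To control this, I would pass to the semi-algebraic frontier $Z:=\cl(\gph G)\setminus\gph G$ and its projection $\pi(Z)\subset\dom G$, which records precisely those base points at which extra limit points can appear.

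The plan is then a downward induction on $\dim\dom G$: choosing a stratification of $\dom G$ compatible, via Proposition~\ref{prop:smooth}, with the sets $N_k$, with the level sets of $x\mapsto\dim G(x)$, and with $\pi(Z)$, one shows that on the top-dimensional strata the escaping-fiber defect is confined to a lower-dimensional semi-algebraic subset, so that a dense relatively-open piece is genuinely outer-semicontinuous, while the lower-dimensional remainder is absorbed into strata handled at later stages of the induction (the process terminating because the dimension drops strictly each time). The main obstacle is exactly this dimension bookkeeping: one must verify that the bad base points---those where a convergent image sequence admits an escaping fiber coordinate---form a set of dimension strictly smaller than that of the ambient stratum, which is where closed-valuedness and the properness analysis of the semi-algebraic map $\pi$ do the essential work. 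Once the finite partition carrying all three properties is in hand, a final application of Proposition~\ref{prop:smooth} refines it into connected smooth manifolds satisfying the frontier condition, and the inheritance remarks of the first paragraph transfer every property down to the strata.
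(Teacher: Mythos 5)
The paper itself offers no proof of this proposition --- it is quoted verbatim from \cite{dim} --- so your argument has to stand entirely on its own. The first half of it does: applying Hardt semi-algebraic triviality to $\pi\colon\gph G\to\dom G$, observing that semi-algebraic homeomorphisms preserve dimension, and producing inner-semicontinuity by transporting a fixed fiber coordinate along the trivialization are all correct, as are your inheritance remarks (inner/outer semicontinuity and dimension constancy relative to a set pass to all subsets), which justify the final refinement into a genuine stratification.

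The outer-semicontinuity half, however, has a genuine gap. You correctly identify the obstruction --- the trivialization of $\gph G$ need not be proper over the base --- but your repair, a downward induction resting on the claim that the ``bad'' base points form a set of dimension strictly smaller than the ambient stratum, is exactly the assertion that requires proof, and you never supply it: saying that ``closed-valuedness and the properness analysis of $\pi$ do the essential work'' is a placeholder, not an argument. Indeed, closed-valuedness never actually enters your chain of deductions; the frontier $Z=\cl(\gph G)\setminus\gph G$ and its projection make sense for any $G$, and since $\dim Z<\dim\gph G$ does \emph{not} force $\dim\pi(Z)<\dim\dom G$ (fibers may have positive dimension), nothing you wrote produces the dimension drop. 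The standard way to close this gap is to apply Hardt triviality not to $\pi|_{\sgph G}$ but to the \emph{ambient} projection $\dom G\times\R^m\to\dom G$, with the trivializations required to be compatible with the semi-algebraic subset $\gph G$ (the triviality theorem permits finitely many such compatibility constraints). Over each resulting piece $C_l$ one then has a fiber-preserving semi-algebraic homeomorphism $\theta_l$ of $C_l\times\R^m$ onto itself carrying $C_l\times E_l$ onto $\gph G\cap(C_l\times\R^m)$, where $E_l=G(x_0)$ for a base point $x_0\in C_l$. Here closed-valuedness does real work: $E_l$ is closed in $\R^m$, so $C_l\times E_l$ is relatively closed in $C_l\times\R^m$, and since $\theta_l$ is a homeomorphism of the ambient product, $\gph G\cap(C_l\times\R^m)$ is relatively closed as well --- which is precisely outer-semicontinuity of $G|_{C_l}$. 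This is the step your graph-only trivialization cannot deliver: a semi-algebraic homeomorphism between $G(x)$ and an abstract fiber model $F_k$ need not preserve closedness, whereas a homeomorphism of the ambient fiber $\R^m$ does. With the ambient version in hand, all three properties come out in one stroke, your induction and bad-set bookkeeping become unnecessary, and the conclusion follows from your own refinement and inheritance remarks.
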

For a more refined result along the lines of Proposition~\ref{prop:cont}, see \cite[Theorem 28]{Pang}. 
The following result is standard.
\begin{proposition} \cite[Theorem 3.18]{Coste-min} \label{prop:const_gen}
Consider a semi-algebraic, set-valued mapping $F\colon\R^n\rightrightarrows\R^m$. Suppose there exists an integer $k$ such that $F(x)$ is $k$-dimensional for each point $x\in \dom F$. Then the equality, $$\dim\gph F= \dim \dom F +k,$$ holds.   
\end{proposition}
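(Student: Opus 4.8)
The plan is to reduce the computation to the case of a trivial fibration over pieces of the domain, where additivity of dimension is manifest. First I would record the two structural facts that make the reduction possible. The domain $\dom F$ is the image of $\gph F$ under the canonical projection $\pi\colon\R^n\times\R^m\to\R^n$, and hence is semi-algebraic by the Tarski--Seidenberg theorem; and for semi-algebraic sets one has the elementary identities $\dim(A\times B)=\dim A+\dim B$ and $\dim(A_1\cup\cdots\cup A_p)=\max_i\dim A_i$, with semi-algebraic homeomorphisms preserving dimension.

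The heart of the argument is Hardt's semi-algebraic triviality theorem applied to $\pi$ restricted to $\gph F$: there is a finite partition of $\dom F$ into semi-algebraic manifolds $\{D_\alpha\}$ such that over each $D_\alpha$ the graph is semi-algebraically trivial, i.e.\ there is a semi-algebraic homeomorphism $\pi^{-1}(D_\alpha)\cap\gph F\;\cong\;D_\alpha\times\Phi_\alpha$ commuting with the projection onto $D_\alpha$, for some semi-algebraic fiber $\Phi_\alpha$. The key point is that under this trivialization the set-valued fiber $F(x)$ is homeomorphic to $\Phi_\alpha$ for each $x\in D_\alpha$; since every fiber is $k$-dimensional by hypothesis, we obtain $\dim\Phi_\alpha=k$ for each $\alpha$ with $D_\alpha\neq\emptyset$.

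From here the computation is immediate. Using additivity of dimension under products and invariance under semi-algebraic homeomorphism,
$$\dim\big(\pi^{-1}(D_\alpha)\cap\gph F\big)=\dim(D_\alpha\times\Phi_\alpha)=\dim D_\alpha+k.$$
Taking the maximum over the finitely many pieces, and using that $\dom F=\bigcup_\alpha D_\alpha$ and $\gph F=\bigcup_\alpha\big(\pi^{-1}(D_\alpha)\cap\gph F\big)$ are finite unions, yields
$$\dim\gph F=\max_\alpha(\dim D_\alpha+k)=\Big(\max_\alpha\dim D_\alpha\Big)+k=\dim\dom F+k,$$
as claimed.

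I expect the main obstacle to be the second step: the triviality theorem is the substantive semi-algebraic input and is not elementary. If one wishes to avoid invoking Hardt's theorem, the same conclusion can be reached through the cylindrical cell decomposition theorem, decomposing $\R^{n+m}$ into cells compatible with $\gph F$ and using that for a cylindrical cell the fiber dimension over its base is constant and that cell dimension is additive, $\dim C=\dim\pi(C)+\dim(\text{fiber})$. That route trades Hardt's theorem for an induction on $m$, whose crux is the one-variable structure (finiteness and monotonicity) theorem governing how the fibers over $\R$ sit inside the cells. This is where all the real content resides, the final dimension count being purely bookkeeping.
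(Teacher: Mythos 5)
Your proposal is correct, but note that the paper itself contains no proof of this proposition: it is quoted as a standard fact from Coste's lecture notes \cite[Theorem 3.18]{Coste-min}, so there is no internal argument to compare against. Your primary route, via Hardt's semi-algebraic triviality theorem, checks out at every step: $\dom F=\pi(\gph F)$ is semi-algebraic by Tarski--Seidenberg; over each piece $D_\alpha$ of the Hardt partition the trivialization carries the fiber $\{x\}\times F(x)$ semi-algebraically homeomorphically onto $\{x\}\times\Phi_\alpha$, and since dimension is invariant under semi-algebraic bijections this forces $\dim\Phi_\alpha=k$; the product formula $\dim(D_\alpha\times\Phi_\alpha)=\dim D_\alpha+k$ and the finite-union formula then give the count. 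This is a genuinely different route from the one in the cited source, whose proof is essentially your fallback: a cylindrical cell decomposition of $\R^{n+m}$ compatible with $\gph F$, in which the projection maps cells to cells of a decomposition of $\R^n$ and each cell satisfies $\dim C=\dim\pi(C)+\dim(\textrm{fiber of } C)$, after which one maximizes fiber dimension over the cells lying above each base cell. The trade-off is as you describe: Hardt's theorem is the heavier input (itself proved by cell decomposition plus a nontrivial gluing argument) but makes the constancy of the fiber over each piece completely transparent, whereas the cell-decomposition argument is self-contained and uses only the machinery on which semi-algebraic dimension theory --- including the product, union, and invariance facts you invoke --- is already built.
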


We will need a version of Proposition~\ref{prop:const_gen} that pertains to local dimension.
\begin{proposition}\label{prop:loc_dim}
Consider a semialgebraic mapping $F\colon\R^n\rightrightarrows\R^m$ that is inner-semicontinuous on its domain. Suppose that there exist constants $k$ and $l$ such that for each pair $(x,v)\in\gph F$, we have
$$\dim_{\sdom F}x=k, ~\dim_{F(x)}v=l.$$ 
Then $\gph F$ has local dimension $k+l$ around every pair $(x,v)\in\gph F$.
\end{proposition}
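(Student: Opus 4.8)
The plan is to localize the map and reduce everything to the global dimension formula of Proposition~\ref{prop:const_gen}. Fix a pair $(\bar{x},\bar{v})\in\gph F$. For each $\epsilon>0$, I would introduce the truncated, semi-algebraic mapping $F_\epsilon$ given by $F_\epsilon(x):=F(x)\cap B_\epsilon(\bar{v})$, whose graph is exactly $G_\epsilon:=\gph F\cap\big(B_\epsilon(\bar{x})\times B_\epsilon(\bar{v})\big)$ and whose domain is $D_\epsilon:=\{x\in B_\epsilon(\bar{x}): F(x)\cap B_\epsilon(\bar{v})\neq\emptyset\}$. By Tarski--Seidenberg all three sets are semi-algebraic. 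Since $\dim_{\sgph F}(\bar{x},\bar{v})=\inf_{\epsilon>0}\dim G_\epsilon$, it suffices to show $\dim G_\epsilon=k+l$ for all small $\epsilon$. The strategy is to compute $\dim G_\epsilon$ through Proposition~\ref{prop:const_gen} as $\dim D_\epsilon+l$, and then to pin down $\dim D_\epsilon=k$.

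First I would verify that $F_\epsilon$ has constant fiber dimension $l$ over $D_\epsilon$, so that Proposition~\ref{prop:const_gen} applies. Fix $x\in D_\epsilon$ and pick any $v'\in F(x)\cap B_\epsilon(\bar{v})$. Since $(x,v')\in\gph F$, the hypothesis gives $\dim_{F(x)}(v')=l$, so some ball $B_{\epsilon''}(v')\subset B_\epsilon(\bar{v})$ satisfies $\dim\big(F(x)\cap B_{\epsilon''}(v')\big)=l$; hence $\dim F_\epsilon(x)\ge l$. Conversely, because every point of $F(x)$ has local dimension $l$, we have $\dim F(x)=l$ and thus $\dim F_\epsilon(x)\le l$. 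Therefore $\dim F_\epsilon(x)=l$ for each $x\in D_\epsilon$, and Proposition~\ref{prop:const_gen} yields $\dim G_\epsilon=\dim D_\epsilon+l$.

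It then remains to show $\dim D_\epsilon=k$. The upper bound is immediate: $D_\epsilon\subset\dom F\cap B_\epsilon(\bar{x})$, whose dimension equals $\dim_{\sdom F}(\bar{x})=k$ once $\epsilon$ is small. For the lower bound I would use inner-semicontinuity of $F$ (relative to its domain, as in Proposition~\ref{prop:cont}) at $\bar{x}$, together with $\bar{v}\in F(\bar{x})$. A routine sequential argument upgrades inner-semicontinuity to the uniform statement that for every $\delta>0$ there is $\rho>0$ with $F(x)\cap B_\delta(\bar{v})\neq\emptyset$ for all $x\in\dom F\cap B_\rho(\bar{x})$: otherwise one extracts $x_i\to\bar{x}$ in $\dom F$ with $F(x_i)\cap B_\delta(\bar{v})=\emptyset$, contradicting the existence of $v_i\in F(x_i)$ with $v_i\to\bar{v}$. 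Applying this with $\delta=\epsilon$ and choosing $\rho\le\epsilon$ gives $\dom F\cap B_\rho(\bar{x})\subset D_\epsilon$, whence $\dim D_\epsilon\ge\dim\big(\dom F\cap B_\rho(\bar{x})\big)=k$. Combining, $\dim G_\epsilon=k+l$ for all small $\epsilon$, and taking the infimum over $\epsilon$ proves the claim.

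I expect the lower bound $\dim D_\epsilon\ge k$ to be the main obstacle, and it is precisely where inner-semicontinuity is indispensable: without it, the $l$-dimensional fibers near $\bar{v}$ could be ``supported'' only over a lower-dimensional subset of the domain, collapsing the local dimension of the graph below $k+l$. The one point requiring care is the reading of ``inner-semicontinuous on its domain'' as inner-semicontinuity relative to $\dom F$ (consistent with how Proposition~\ref{prop:cont} produces it); the remaining bookkeeping --- semi-algebraicity of $F_\epsilon$, $G_\epsilon$, $D_\epsilon$ and the two applications of the local-dimension stabilization --- is routine.
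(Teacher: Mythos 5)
Your proposal is correct, and it takes a genuinely different route from the paper's. You localize around the fixed pair $(\bar{x},\bar{v})$: truncate to $F_\epsilon(x)=F(x)\cap B_\epsilon(\bar{v})$, check that its fibers still have dimension exactly $l$, apply Proposition~\ref{prop:const_gen} to get $\dim G_\epsilon=\dim D_\epsilon+l$, and then pin down $\dim D_\epsilon=k$, the lower bound coming from the $\epsilon$--$\delta$ upgrade of inner semicontinuity, which shows that $D_\epsilon$ contains a full relative neighborhood of $\bar{x}$ in $\dom F$. The paper instead takes an arbitrary stratification of $\gph F$ and proves that every \emph{maximal stratum} $M$ has dimension exactly $k+l$: its fibers $M\cap(\{x\}\times\R^m)$ are relatively open in the fibers of $F$ (by maximality), hence $l$-dimensional, while its projection onto $\R^n$ must be $k$-dimensional, since otherwise inner semicontinuity would force points of $M$ to be limits of points of $\gph F$ lying over the domain but outside that projection, contradicting maximality; the conclusion then follows from Proposition~\ref{prop:loc_max}. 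Inner semicontinuity plays the same conceptual role in both arguments --- it prevents the $l$-dimensional fibers near $\bar{v}$ from being supported over a thin part of the domain --- but the mechanisms differ. Your version is more elementary and self-contained: it needs no stratification of the graph and no maximal-stratum machinery, only Proposition~\ref{prop:const_gen}, the stabilization of local dimension, and the fact that the dimension of a semi-algebraic set is the maximum of its local dimensions (used to get $\dim F(x)=l$). The paper's version, in exchange, yields the extra structural fact that every maximal stratum of \emph{any} stratification of $\gph F$ has dimension $k+l$, and it matches the stratification-based induction run later in Theorem~\ref{thm:loc_dim}. Two small points to make explicit if you write this up: your $G_\epsilon$ is cut out by a product of balls rather than a ball in $\R^n\times\R^m$, so you should note that the two families are cofinal and give the same infimum; and your reading of ``inner-semicontinuous on its domain'' as inner semicontinuity along sequences in $\dom F$ is indeed the intended one (it is exactly how the paper's own proof, and Proposition~\ref{prop:cont}, use the hypothesis).
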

\begin{proof} Let $\pi\colon\R^n\times\R^m\to\R^n$ be the canonical projection onto $\R^n$. Consider any stratification $\mathcal{A}$ of $\gph F$ and let $M\in \mathcal{A}$ be any maximal stratum. Clearly
$$\dim M\leq \dim \gph F\leq k+l.$$ 
Consider an arbitrary point $x\in \pi(M)$. Since $M$ is maximal, the set $M\cap (\{x\}\times\R^m)$ is open relative to $\gph F\cap (\{x\}\times\R^m)$. Furthermore, since $\dim_{F(x)}v=l$ for each vector $v\in F(x)$, it easily follows that $\dim M\cap (\{x\}\times\R^m)=l$.

We now claim that $\dim \pi(M)= k$. Indeed suppose this is not the case, that is the strict inequality $\dim \pi(M)< k$ holds. Since $\dim_{\sdom F}x=k$, we deduce that there exists a sequence $x_i\to\bar{x}$, with $x_i\in \dom F$ and $x_i\notin \pi(M)$ for each index $i$. Since $F$ is inner-semicontinuous on $M$, we deduce 
$$M\cap (\{x\}\times\R^m)\subset \limsupp_{i\to\infty} ~\{x_i\}\times F(x_i),$$ which contradicts maximality of $M$. Consequently, using Proposition~\ref{prop:const_gen}, we deduce $\dim M=k+l$. Since $M$ was an arbitrary maximal stratum, the result follows by an application of Proposition~\ref{prop:loc_max}.
\end{proof}

\subsection{Local dimension of semi-algebraic subdifferential graphs.}
We begin with a definition.
\begin{definition}[Subjets]
{\rm For a function $f\colon\R^n\to\overline{\R}$, the {\em limiting subjet} is given by
$$[\partial f]:=\{(x,f(x),v):v\in \partial f(x)\}.$$ Subjets corresponding to the other subdifferentials are defined analogously.} 
\end{definition}

Much like $f$-attentive convergence, subjets are useful for keeping track of variational information in absence of continuity.
In this section, we build on the following theorem. This result and its consequences for generic semi-algebraic optimization problems are discussed extensively in \cite{dim}. 
\begin{theorem}\cite[Theorem 3.6]{dim}\label{thm:grd}
Let $f\colon\R^n\rightarrow\overline{\R}$ be a proper semi-algebraic function. Then the subjets $[\partial_P f]$, $[\hat{\partial} f]$, $[\partial f]$ and $[\partial_c f]$ have dimension exactly $n$.
\end{theorem}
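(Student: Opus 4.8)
\emph{Proof proposal.} The plan is to reduce the statement to a single fact about the \emph{proximal} and \emph{Clarke normal bundles} of a closed semi-algebraic set, and then read off the four subjets by slicing. Writing $Q:=\epi f\subset\R^{n+1}$, a semi-algebraic set that we may take to be closed, and recalling that $v\in\partial_{\bullet} f(x)$ if and only if $(v,-1)\in N^{\bullet}_{\sepi f}(x,f(x))$, I would first record the chain of inclusions $[\partial_P f]\subset[\hat{\partial} f]\subset[\partial f]\subset[\partial_c f]$. Because of this chain it suffices to prove two inequalities: the lower bound $\dim[\partial_P f]\geq n$ and the upper bound $\dim[\partial_c f]\leq n$; everything in between is then squeezed to the value $n$. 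For the bundle picture I set $\mathcal{N}^P_Q:=\{(y,w):y\in Q,\ w\in N^P_Q(y)\}$ and $\mathcal{N}^c_Q:=\{(y,w):y\in Q,\ w\in N^c_Q(y)\}$, both semi-algebraic.

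The lower bound is the easy half. The nearest-point map $z\mapsto\big(P_Q(z),\,z-P_Q(z)\big)$ is single-valued and semi-algebraic on a full-measure, hence $(n+1)$-dimensional, subset of $\R^{n+1}$, takes values in $\mathcal{N}^P_Q$, and is injective since $z=y+w$ recovers its argument; as injective semi-algebraic maps do not decrease dimension, $\dim\mathcal{N}^P_Q\geq n+1$. The subjet $[\partial_P f]$ is the image, under the injective map $(x,f(x),v)\mapsto\big((x,f(x)),(v,-1)\big)$, of the part of $\mathcal{N}^P_Q$ obtained by fixing the last coordinate of the normal equal to $-1$; since the fibers of $\mathcal{N}^P_Q$ are cones, fixing that coordinate to a nonzero constant drops the dimension by exactly one, once one checks (again via the projection map, whose normals have strictly negative vertical component on a full-dimensional set of arguments) that the part of $\mathcal{N}^P_Q$ with nonvertical normal is genuinely $(n+1)$-dimensional. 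This yields $\dim[\partial_P f]\geq n$.

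For the companion upper bound I would stratify. Using Proposition~\ref{prop:smooth}, decompose $Q$ into finitely many ${\bf C}^2$ strata; over a stratum $M$ of dimension $d$ the inclusion $N^P_Q(y)\subset\hat{N}_Q(y)\subset N_M(y)$, recalled in Lemma~\ref{lem:access_set_main}, confines the proximal and Fr\'echet fibers to the $(n+1-d)$-dimensional normal space of $M$, so the corresponding piece of $\mathcal{N}^P_Q$ has dimension at most $d+(n+1-d)=n+1$; summing over the finitely many strata gives $\dim\mathcal{N}^P_Q\leq n+1$. The limiting case follows immediately, since the limiting bundle $\mathcal{N}_Q:=\{(y,w):w\in N_Q(y)\}$ equals $\cl\mathcal{N}^P_Q$ by \cite[Exercise 6.18]{VA} and closure preserves semi-algebraic dimension. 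Slicing by $\{(v,-1)\}$ over the graph as above then delivers $\dim[\hat{\partial} f]=\dim[\partial f]=n$.

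The genuine obstacle is the Clarke subjet, and this is where I expect the real effort. Since $N^c_Q=\cl\conv N_Q$, fiberwise convexification can a priori \emph{raise} the set-dimension of a fiber (two skew rays span a half-plane), so the naive Carath\'eodory estimate — writing each Clarke normal as a convex combination of $n+2$ limiting normals over a common base and bounding the dimension of the resulting fiber product — overcounts and does \emph{not} give $n+1$. To control this I would apply Proposition~\ref{prop:cont} to refine the stratification so that on each stratum $M$ the map $y\mapsto N_Q(y)$ is inner-semicontinuous with images of constant dimension, and then use Propositions~\ref{prop:const_gen} and~\ref{prop:loc_dim} to express $\dim\mathcal{N}^c_Q$ over $M$ as the base dimension plus the generic fiber dimension of $\cl\conv N_Q(y)$. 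The crux is then a purely geometric claim: for a semi-algebraic set, $\dim\conv N_Q(y)$ cannot exceed $(n+1)-\dim M$ on a positive-dimensional stratum $M$; equivalently, the directions in which convexification spreads the normal cone must already be ``absorbed'' by normals attached to higher strata — accessible in the sense of Lemma~\ref{lem:access_set_main} — so that the apparent extra dimension is accounted for in $\dim\mathcal{N}_Q=n+1$. Ruling out a whole stratum at which the limiting normal cone is low-dimensional as a set yet spans a large subspace is the technical heart of the matter; granting it, $\dim\mathcal{N}^c_Q\leq n+1$, and the final slice gives $\dim[\partial_c f]\leq n$, completing the squeeze. I would stress that this argument bounds only the \emph{global} dimension: the same absorption phenomenon is precisely what may fail to be locally uniform, matching the paper's remark that uniform local dimension breaks down for the Clarke graph.
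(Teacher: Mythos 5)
Two preliminary remarks before the substance. First, this paper never proves Theorem~\ref{thm:grd}: it is imported wholesale from \cite[Theorem 3.6]{dim}, and what the paper itself proves is the sharper \emph{local} statement (Theorem~\ref{thm:loc_dim}) for the proximal, Fr\'{e}chet, and limiting subjets only. So your proposal can only be judged against the argument of the cited work and the machinery developed here. Second, for the proximal, Fr\'{e}chet, and limiting subjets, your squeeze is sound and is essentially the known proof: the lower bound $\dim[\partial_P f]\ge n$ via the injective semi-algebraic parametrization $z\mapsto(P_Q(z),z-P_Q(z))$ of the proximal normal bundle of $Q=\epi f$, and the upper bound via covering the bundle by the normal bundles of the strata of Proposition~\ref{prop:smooth} (each a manifold of dimension $n+1$), taking closures to pass from proximal to limiting normals, and then your cone-slicing observation at vertical coordinate $-1$. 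Two assertions you make in passing do need arguments, though both are fillable: closedness of $\epi f$ presupposes lower semicontinuity, and the claim that projection normals have strictly negative vertical component on a full-dimensional set of arguments requires a proof --- for instance, $d_{\sepi f}$ is nonincreasing in the vertical variable and cannot be constant on any vertical line (a proper lsc $f$ is bounded below on bounded sets), so a Lipschitz/Fubini argument rules out the vertical partial derivative vanishing almost everywhere off $\epi f$.

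The genuine gap is exactly where you flag it: the bound $\dim[\partial_c f]\le n$, which in your squeeze is the \emph{only} estimate beyond the limiting case, is reduced to a ``crux claim'' that you grant rather than prove. As written, the proposal therefore does not establish the theorem for $[\partial_c f]$. The missing ingredient is standard and it dissolves, rather than confirms, the ``absorption by higher strata'' picture you sketch: take the stratification of $\epi f$ to be Whitney (a)-regular (semi-algebraic sets admit such refinements, compatible with any prescribed finite semi-algebraic family). Then for $y$ in a stratum $M$, every limiting normal $w\in N_{\sepi f}(y)$ is a limit of Fr\'{e}chet normals $w_i\in\hat{N}_{\sepi f}(y_i)\subset N_{M_i}(y_i)$ at points $y_i\to y$ lying in finitely many strata $M_i$, and condition (a) forces every limit of the normal spaces $N_{M_i}(y_i)$ into $N_M(y)$. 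Hence $N_{\sepi f}(y)\subset N_M(y)$; and since $N_M(y)$ is a \emph{linear subspace} --- closed and convex --- convexification costs nothing: $N^{c}_{\sepi f}(y)=\cl\conv N_{\sepi f}(y)\subset N_M(y)$ as well. The Clarke normal bundle is thus covered by the same finitely many $(n+1)$-dimensional normal bundles as the limiting one, and your cone-slicing gives $\dim[\partial_c f]\le n$. This is precisely the projection-formula route of \cite{tame_opt} that \cite{dim} follows ($\partial_c f(x)\subset\nabla g(x)+N_M(x)$ for $x\in M$, with $g$ a smooth representative of $f$ on $M$); no statement about convex hulls being ``absorbed'' by normals attached to higher strata is ever needed globally. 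Your closing intuition is correct only in the local sense: it is the failure of these stratum-wise containments to be \emph{uniform} from stratum to stratum that breaks the local-dimension statement for the Clarke subjet, as the paper's three-variable example shows, while the global bound survives untouched.
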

An immediate question arises: Can the four subjets have local dimension smaller than $n$ at some of their points? In a recent paper \cite{small}, the authors showed that this indeed may easily happen for $[\partial_c f]$. Remarkably the authors showed that the subjets $[\partial_P f]$, $[\hat{\partial} f]$, and $[\partial f]$ of a lower-semicontinuous, semi-algebraic function $f\colon\R^n\to\overline{\R}$ do have uniform local dimension $n$. The significance of this result and the relation to Minty's theorem were also discussed.
In this section, we provide a much simplified proof of this rather striking fact (Theorem~\ref{thm:loc_dim}).
The main tool we use is the following accessibility lemma, which is a special case of Lemma~\ref{lem:access_set_main}. Since the proof is much simpler than that of Lemmma~\ref{lem:access_set_main}, we include the full argument below. 
\begin{lemma}[Accessibility]\label{lem:access_prox}
Consider a closed set $Q\subset\R^n$, a manifold $M\subset Q$, and a point $\bar{x}\in M$. Recall that the inclusion $N^{P}_Q(\bar{x})\subset N_M(\bar{x})$ holds. Suppose that a proximal normal vector $\bar{v}\in N^{P}_Q(\bar{x})$ lies in the boundary of $N^{P}_Q(\bar{x})$, relative to the linear space $N_M(\bar{x})$. Then there exist sequences $x_i\to\bar{x}$ and $v_i\to\bar{v}$, with $v_i\in N_Q^{P}(x_i)$, and so that all the points $x_i$ lie outside of $M$.
\end{lemma}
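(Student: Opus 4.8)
The plan is to mirror the construction in the proof of Lemma~\ref{lem:access_set_main}, replacing the Fréchet estimate by the sharper quadratic estimate available for proximal normals, which is what makes the present argument shorter. Since $\bar v$ lies in the relative boundary of the convex cone $N^P_Q(\bar x)$ inside the linear space $N_M(\bar x)$ (the latter being genuinely a subspace by Theorem~\ref{thm:clarke_man}), a supporting-hyperplane argument produces a vector $\bar w\in N_M(\bar x)$ with $\bar v+t\bar w\notin N^P_Q(\bar x)$ for every $t>0$. I would then set $y(t):=\bar x+t(\bar v+t\bar w)$, so that $y(t)\to\bar x$, $\frac{y(t)-\bar x}{t}=\bar v+t\bar w\to\bar v$, and, because $N^P_Q(\bar x)$ is a cone, $y(t)-\bar x\notin N^P_Q(\bar x)$ for $t>0$. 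Choosing a selection $x(t)\in P_Q(y(t))$ of nearest points (available since $Q$ is closed) yields $x(t)\to\bar x$ together with $\frac{y(t)-x(t)}{t}\in N^P_Q(x(t))$ for every $t$.

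Two separations need to be checked. First, $x(t)\neq\bar x$: were $\bar x$ a nearest point of $Q$ to $y(t)$, the vector $y(t)-\bar x$ would be a proximal normal, contradicting $y(t)-\bar x\notin N^P_Q(\bar x)$. Second, $x(t)\notin M$ for all small $t>0$: since $M\subset Q$ forces $N^P_Q(x(t))\subset N_M(x(t))$, if $x(t)\in M$ then $y(t)\in x(t)+N_M(x(t))$; on the other hand $y(t)-\bar x=t(\bar v+t\bar w)$ lies in the subspace $N_M(\bar x)$, so $y(t)\in\bar x+N_M(\bar x)$. Invoking the prox-normal neighborhood of $M$ at $\bar x$ (Theorem~\ref{thm:prox}) and the uniqueness of the $M$-projection there, both relations force $x(t)=P_M(y(t))=\bar x$, contradicting the first point.

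It remains to establish the convergence $\frac{y(t)-x(t)}{t}\to\bar v$, and this is where I expect the only real work to lie. Because $\frac{y(t)-\bar x}{t}\to\bar v$, it suffices to show $\frac{x(t)-\bar x}{t}\to 0$. Squaring the defining inequality $|y(t)-x(t)|\le|y(t)-\bar x|$ gives $\big\langle\frac{y(t)-\bar x}{t},\frac{x(t)-\bar x}{t}\big\rangle\ge\frac12\big|\frac{x(t)-\bar x}{t}\big|^2$, exactly as in (\ref{eq:acc2}); Cauchy--Schwarz then bounds $\frac{x(t)-\bar x}{t}$, and any accumulation point $\gamma$ satisfies $\langle\bar v,\gamma\rangle\ge\frac12|\gamma|^2$. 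Here the proximal hypothesis enters decisively: the defining ball for $\bar v$, namely $\bar x\in P_Q(\bar x+\frac1r\bar v)$ for some $r>0$, gives after squaring the global inequality $\langle\bar v,q-\bar x\rangle\le\frac r2|q-\bar x|^2$ for all $q\in Q$. Applying it at $q=x(t)$ and dividing by $t$ shows $\langle\bar v,\gamma\rangle\le 0$, whence $\gamma=0$. Thus $\frac{x(t)-\bar x}{t}\to 0$, and setting $x_i=x(t_i)$, $v_i=\frac{y(t_i)-x(t_i)}{t_i}$ for any sequence $t_i\downarrow 0$ completes the argument. The main obstacle is thus the quantitative control of $\frac{x(t)-\bar x}{t}$; the quadratic proximal estimate is precisely what removes the $o(\cdot)$ bookkeeping needed in the Fréchet case and keeps the proof elementary.
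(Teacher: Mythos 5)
Your proof is correct, but it takes a genuinely different route from the paper's. You transplant the argument of Lemma~\ref{lem:access_set_main} --- the supporting-hyperplane choice of $\bar w$ (valid here because $N^P_Q(\bar x)$ is a convex cone), the curve $y(t)=\bar x+t(\bar v+t\bar w)$, a projection selection $x(t)\in P_Q(y(t))$, the two exclusions $x(t)\ne\bar x$ and $x(t)\notin M$ via Theorem~\ref{thm:prox}, and finally the quantitative step $\frac{x(t)-\bar x}{t}\to 0$ --- replacing the Fr\'{e}chet $o(\cdot)$ inequality by the global quadratic estimate $\langle\bar v,q-\bar x\rangle\le\frac{r}{2}|q-\bar x|^2$ for all $q\in Q$, which proximal normality supplies; every step checks out. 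The paper instead exploits proximality more decisively: since $\bar v\in N^P_Q(\bar x)$, one may fix a single scale $\lambda>0$ so small that $P_Q(\bar x+\lambda\bar v)=\{\bar x\}$ and $\bar x+\lambda\bar v$ lies in a prox-normal neighborhood of $M$ at $\bar x$, and then perturb the \emph{direction} rather than shrink the scale, choosing $v_i\to\bar v$ with $v_i\in N_M(\bar x)\setminus N^P_Q(\bar x)$ and projecting the points $\bar x+\lambda v_i$ onto $Q$. The resulting $x_i\in P_Q(\bar x+\lambda v_i)$ satisfy $x_i\ne\bar x$, $x_i\to\bar x$, $x_i\notin M$, and --- this is the payoff --- the proximal normals $\frac{1}{\lambda}(\bar x-x_i)+v_i$ converge to $\bar v$ automatically because $\lambda$ is held fixed, so the entire boundedness/accumulation-point analysis that occupies the second half of your argument simply disappears. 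Your approach buys uniformity (one template covering both the Fr\'{e}chet and proximal lemmas); the paper's buys the brevity it advertises when it calls this proof ``much simpler'' than that of Lemma~\ref{lem:access_set_main}.
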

\begin{proof} There exists a real number $\lambda >0$ so that $\bar{x}+\lambda\bar{v}$ lies in the prox-normal neighborhood $W$ of $M$ at $\bar{x}$ and such that the equality $P_Q(\bar{x}+\lambda\bar{v})=\bar{x}$ holds. Consider any sequence $v_i\in\R^n$ satisfying 
$$v_i\to\bar{v}, v_i\in N_M(\bar{x}), v_i\notin N_Q^{P}(\bar{x}).$$
Choose arbitrary points $x_i\in P_Q(\bar{x}+\lambda v_i)$. We have 
$$(\bar{x}- x_i)+\lambda v_i\in N_Q^{P}(x_i).$$
We deduce $x_i\neq \bar{x}$. Clearly, the sequence $x_i$ converges to $\bar{x}$. We claim $x_i\notin M$ for all sufficiently large indices $i$. Indeed, if it were otherwise, then for large $i$, the points $\bar{x}+\lambda v_i$ would lie in $W$ and we would have $x_i\in P_M(\bar{x}+\lambda v_i)=\bar{x}$, which is a contradiction. Thus we have obtained a sequence $(x_i,\frac{1}{\lambda}(\bar{x}- x_i)+v_i)\in \gph N_Q^{P}$, with $x_i\notin M$, and satisfying $(x_i,\frac{1}{\lambda}(\bar{x}- x_i)+v_i))\to(\bar{x},\bar{v})$.
\end{proof}

The following is now immediate.
\begin{corollary}\label{cor:access_func}
Consider a lower semicontinuous function $f\colon\R^n\to\overline{\R}$, a manifold $M\subset\R^n$, and a point $\bar{x}\in M$. Suppose that $f$ is smooth on $M$ and the strict inequality $\dim \partial_P f(\bar{x})<\dim N_M(\bar{x})$ holds. Then for every vector $\bar{v}\in \partial_{P} f(\bar{x})$, there exist sequences $(x_i,f(x_i),v_i)\to(\bar{x},f(\bar{x}),\bar{v})$, with $v_i\in \partial_P f(x_i)$, and so that all the points $x_i$ lie outside of $M$.
\end{corollary}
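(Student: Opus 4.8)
The plan is to apply the Accessibility Lemma (Lemma~\ref{lem:access_prox}) to the closed set $Q:=\epi f\subset\R^{n+1}$ together with the smooth manifold $\tilde{M}:=\gph f\big|_M$, at the point $(\bar{x},f(\bar{x}))$. The manifold $\tilde{M}$ is genuinely a $\mathbf{C}^2$-submanifold precisely because $f$ is smooth on $M$, and it sits inside $\epi f$. The only bridge needed is the standard correspondence between proximal subgradients of $f$ and proximal normals to its epigraph: a vector $v$ lies in $\partial_P f(x)$ exactly when $(v,-1)\in N^{P}_{\sepi f}(x,f(x))$. In particular, the fixed subgradient $\bar{v}\in\partial_P f(\bar{x})$ supplies the proximal normal $(\bar{v},-1)\in N^{P}_{\sepi f}(\bar{x},f(\bar{x}))$.

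The crux is to verify the hypothesis of Lemma~\ref{lem:access_prox}, namely that $(\bar{v},-1)$ lies in the boundary of $N^{P}_{\sepi f}(\bar{x},f(\bar{x}))$ relative to the normal space $N_{\tilde{M}}(\bar{x},f(\bar{x}))$, and I would do this by a pure dimension count. First, since $\dim\tilde{M}=\dim M$ and $\tilde{M}\subset\R^{n+1}$, the normal space satisfies $\dim N_{\tilde{M}}(\bar{x},f(\bar{x}))=(n+1)-\dim M=\dim N_M(\bar{x})+1$. Second, I claim $\dim N^{P}_{\sepi f}(\bar{x},f(\bar{x}))\le \dim\partial_P f(\bar{x})+1$. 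Here I would use that the proximal normal cone is a convex cone whose slice at $\{\alpha=-1\}$ equals $\partial_P f(\bar{x})\times\{-1\}$, a convex set of dimension $p:=\dim\partial_P f(\bar{x})$; writing $A$ for its affine hull, every horizon normal $(w,0)\in N^{P}_{\sepi f}(\bar{x},f(\bar{x}))$ is a recession direction of $\partial_P f(\bar{x})$ (because $(\bar{v}+tw,-1)$ remains in the cone for all $t\ge 0$), so $w\in A-A$, and the whole cone therefore lies in the $(p+1)$-dimensional linear span of $A\times\{-1\}$. Combining these facts with the hypothesis $\dim\partial_P f(\bar{x})<\dim N_M(\bar{x})$ yields $\dim N^{P}_{\sepi f}(\bar{x},f(\bar{x}))\le p+1\le\dim N_M(\bar{x})=\dim N_{\tilde{M}}(\bar{x},f(\bar{x}))-1$. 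Hence the proximal normal cone has empty interior relative to $N_{\tilde{M}}$, so every one of its points, and in particular $(\bar{v},-1)$, lies in its relative boundary.

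With the hypothesis confirmed, Lemma~\ref{lem:access_prox} delivers sequences $(x_i,r_i)\to(\bar{x},f(\bar{x}))$ and $(w_i,\beta_i)\to(\bar{v},-1)$ with $(w_i,\beta_i)\in N^{P}_{\sepi f}(x_i,r_i)$ and $(x_i,r_i)\notin\tilde{M}$. The remaining step is to decode these back into subgradient language. For large $i$ we have $\beta_i<0$, and a proximal normal with strictly negative vertical component cannot occur at a point strictly above the graph: testing the defining inequality against the feasible downward variation $(x_i,r_i-\epsilon)$ forces $-\beta_i\le O(\epsilon)$, a contradiction unless $r_i=f(x_i)$. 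Normalising, $v_i:=w_i/|\beta_i|\in\partial_P f(x_i)$ with $v_i\to\bar{v}$, while $(x_i,f(x_i))\notin\tilde{M}=\gph f\big|_M$ is equivalent to $x_i\notin M$. This produces exactly the asserted sequences $(x_i,f(x_i),v_i)\to(\bar{x},f(\bar{x}),\bar{v})$ with $v_i\in\partial_P f(x_i)$ and $x_i\notin M$.

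I expect the dimension count to be the only real content, in particular the bookkeeping that the horizon normals do not inflate the dimension of the proximal normal cone beyond $\dim\partial_P f(\bar{x})+1$; the epigraphical translations at either end are routine properties of proximal normals, which is presumably why the authors label the corollary ``immediate''.
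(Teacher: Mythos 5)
Your proof is correct and follows essentially the same route as the paper: apply Lemma~\ref{lem:access_prox} with $Q=\epi f$ and the graph manifold $\gph f\big|_M$, after observing that the hypothesis $\dim \partial_P f(\bar{x})<\dim N_M(\bar{x})$ forces $N^{P}_{\sepi f}(\bar{x},f(\bar{x}))$ to have empty interior relative to the normal space of the graph, so that every proximal normal there is a relative boundary point. The paper compresses this into ``one can easily see''; your write-up supplies exactly the omitted details (the span bound accounting for horizon normals, the argument that $r_i=f(x_i)$, and the renormalization back to subgradients), all of which are sound.
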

\begin{proof} From the strict inequality, one can easily see that the normal cone $N_{\sepi f}^P(\bar{x},f(\bar{x}))$ has empty interior relative to the normal space $N_{\sgph}(\bar{x},f(\bar{x}))$. An application of Lemma~\ref{lem:access_prox} completes the proof.
\end{proof}

We can now prove the main result of this section.
\begin{theorem}\label{thm:loc_dim}
Consider a lower-semicontinuous, semi-algebraic function $f\colon\R^n\to\overline{\R}$. Then the subjets $[\partial_P f]$, $[\hat{\partial} f]$, and $[\partial f]$ have constant local dimension $n$ around each of their points. 
\end{theorem}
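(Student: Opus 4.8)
The plan is to prove the statement first for the proximal subjet $[\partial_P f]$, and then to deduce it for $[\hat{\partial}f]$ and $[\partial f]$ by a soft closure argument. Throughout I would fix a single stratification $\mathcal{S}$ of $[\partial f]$ (compatible with all three subjets) so that local dimensions may be read off via Proposition~\ref{prop:id} and Proposition~\ref{prop:loc_max}. Since Theorem~\ref{thm:grd} gives each subjet global dimension $n$, every local dimension is automatically at most $n$, so the entire content lies in the lower bound.

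For $[\partial_P f]$ I would begin by combining Proposition~\ref{prop:smooth} with Proposition~\ref{prop:cont} to select a stratification $\{M_i\}$ of $\dom f$ on which $f$ is smooth, the map $\partial_P f$ is inner-semicontinuous, and $\dim\partial_P f(x)$ is constant along each stratum. Fix a point $z^{*}=(x^{*},f(x^{*}),v^{*})\in[\partial_P f]$ with $x^{*}$ in the stratum $M$, and argue by induction on the codimension $n-\dim M$ of the source stratum. The organizing object is the lifted proximal normal $(v^{*},-1)$, which lies in $N^{P}_{\sepi f}(x^{*},f(x^{*}))$ and hence in the normal space of the smooth manifold $\gph f|_M$ at $(x^{*},f(x^{*}))$, a linear space of dimension $n+1-\dim M$.

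The induction splits according to the position of $(v^{*},-1)$ in that normal space. If $(v^{*},-1)$ lies in the relative interior of $N^{P}_{\sepi f}(x^{*},f(x^{*}))$ relative to the normal space of $\gph f|_M$, then $\partial_P f(x)$ is full-dimensional in the normal space near $x^{*}$ and $v^{*}$ is an interior fiber point, so the restricted subjet $\{(x,f(x),v):x\in M,\ v\in\partial_P f(x)\}$ already has local dimension $\dim M+(n-\dim M)=n$ at $z^{*}$ by Proposition~\ref{prop:loc_dim}; as this set sits inside $[\partial_P f]$, we conclude $\dim_{[\partial_P f]}(z^{*})=n$. This case contains the base case of open strata, where the fiber is the single gradient. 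Otherwise $(v^{*},-1)$ lies in the relative boundary, and Lemma~\ref{lem:access_prox}, applied to the closed set $\epi f$ and the manifold $\gph f|_M$, yields a sequence $(x_j,r_j,w_j)\to(x^{*},f(x^{*}),(v^{*},-1))$ with $w_j\in N^{P}_{\sepi f}(x_j,r_j)$ and $(x_j,r_j)\notin\gph f|_M$. Because the vertical component of $w_j$ tends to $-1$, each such proximal normal forces $r_j=f(x_j)$, hence $x_j\notin M$; after rescaling each $w_j$ to vertical component $-1$ and passing to a subsequence, we obtain $(x_j,f(x_j),v_j)\to z^{*}$ with $v_j\in\partial_P f(x_j)$ and all $x_j$ in a common stratum $M'$ satisfying $M\subset\cl M'$, so $\dim M'>\dim M$. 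The inductive hypothesis applies to these points, since their source stratum $M'$ has smaller codimension, and gives local dimension $n$ at each $(x_j,f(x_j),v_j)$. Since $\mathcal{S}$ has only finitely many strata, a subsequence of the $(x_j,f(x_j),v_j)$ lies in the closure of one fixed $n$-dimensional stratum $S\in\mathcal{S}$, whence $z^{*}\in\cl S$ and $\dim_{[\partial_P f]}(z^{*})=n$. This closes the induction.

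Finally, to pass from $[\partial_P f]$ to $[\hat{\partial}f]$ and $[\partial f]$, I would use that limiting normals to $\epi f$ are limits of proximal normals, which (after normalizing vertical components to $-1$, using that a nonzero vertical component forces the base point onto the graph) gives $[\partial f]\subset\cl[\partial_P f]$; together with the chain $[\partial_P f]\subset[\hat{\partial}f]\subset[\partial f]$, all three subjets share a common closure. It then suffices to observe that if a semi-algebraic set $A$ has local dimension $n$ at each of its points and $A\subset B\subset\cl A$ with $\dim B=n$ (Theorem~\ref{thm:grd}), then $B$ has local dimension $n$ everywhere: each $z\in B$ is a limit of points of $A$, each of which lies in the closure of a common (by finiteness of $\mathcal{S}$) $n$-dimensional stratum, whose closure therefore contains $z$. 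I expect the main obstacle to be the bookkeeping around fiber dimensions, namely verifying that the relative-interior versus relative-boundary dichotomy cleanly separates the direct case (Proposition~\ref{prop:loc_dim}) from the accessible case (Lemma~\ref{lem:access_prox}); the delicate point is that a lower-dimensional piece of an otherwise full-dimensional fiber corresponds to a relative-boundary normal and is therefore accessible from higher strata, rather than being a genuine exception to the pure-dimensionality conclusion.
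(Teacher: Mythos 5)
Your proof is correct and takes essentially the same route as the paper's: stratify using Propositions~\ref{prop:smooth} and~\ref{prop:cont}, induct on the (co)dimension of the source stratum with the dichotomy ``generic case handled by Proposition~\ref{prop:loc_dim}, degenerate case handled by the accessibility Lemma~\ref{lem:access_prox} applied to $\epi f$ and $\gph f\big|_M$,'' and then pass to $[\hat{\partial}f]$ and $[\partial f]$ by density of $[\partial_P f]$. The only cosmetic difference is that your case split is pointwise --- whether $(v^{*},-1)$ lies in the relative interior or relative boundary of $N^{P}_{\sepi f}(x^{*},f(x^{*}))$ within the normal space of $\gph f\big|_M$ --- whereas the paper splits per stratum according to whether the fiber dimension equals $n-\dim M$, so your accessibility case absorbs relative-boundary points of full-dimensional fibers that the paper handles via Proposition~\ref{prop:loc_dim}; both versions are valid.
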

\begin{proof} We first prove the theorem for the subjet $[\partial_P f]$. 
Consider the semi-algebraic set-valued mapping 
$$F(x):=\{f(x)\}\times \partial_P f(x),$$ whose graph is precisely $[\partial_P f]$. By Propositions~\ref{prop:smooth} and~\ref{prop:cont}, we may stratify the domain of $F$ into finitely many semi-algebraic manifolds $\{M_i\}$, so that on each stratum $M_i$, the mapping $F$ is inner-semicontinuous, the images $F(x)$ have constant dimension, and $f$ is smooth.
Consider a triple $(x,f(x),v)\in[\partial_P f]$. We prove the theorem by induction on the dimension of the strata $M$ in which the point $x$ lies. Clearly the result holds for the strata of dimension $n$, if there are any. As an inductive hypothesis, assume that the theorem holds for all points $(x,f(x),v)\in[\partial_P f]$ with $x$ lying in strata of dimension at least $k$, for some integer $k\geq 1$.

Now consider a stratum $M$ of dimension $k-1$ and a point $x\in M$. If $\dim F(x)=n-\dim M$, then recalling that $F$ is inner-semicontinuous on $M$ and applying Proposition~\ref{prop:loc_dim}, we see that the set $\gph F\Big|_M$ has local dimension $n$ around $(x,f(x),v)$ for any $v\in\partial_P f(x)$. The result follows in this case.

Now suppose $\dim F(x)< n-\dim M$. Then, by Corollary~\ref{cor:access_func}, for such a vector $v$, there exists a sequence $(x_i,f(x_i),v_i)\to (x,f(x),v)$  satisfying $(x_i,f(x_i),v_i)\in[\partial_P f]$ and $x_i\notin M$ for each index $i$. Restricting to a subsequence, we may assume that all the points $x_i$ lie in a stratum $K$ satisfying $\dim K\geq k$. By the inductive hypothesis, we deduce 
$$\dim_{[\partial_P f]} (x,f(x),v)\geq \lp_{i\to\infty} \dim_{[\partial_P f]} (x_i,f(x_i),v_i)=n.$$ This completes the proof of the inductive step and of the theorem for the subjet $[\partial_P f]$.

Now observe that $[\partial_P f]$ is dense in $[\hat{\partial} f]$ and in $[\partial f]$. It follows that $[\hat{\partial} f]$ and $[\partial f]$ also have local dimension $n$ around each of their points.
\end{proof}

Surprisingly Theorem~\ref{thm:loc_dim} may fail in the Clarke case, even for Lipschitz continuous functions.
\begin{example}
{\rm
Consider the function $f\colon\R^3\to\R,$ defined by
\begin{displaymath}
   f(x,y,z) = \left\{
     \begin{array}{ll}
       \min\{x,y,z^2\} &\mbox{\rm if }(x,y,z) \in \R_{+}^3\\
       \min\{-x,-y,z^2\} &\mbox{\rm if } (x,y,z) \in \R_{-}^3\\
       0 & \mbox{\rm{otherwise}.}       
     \end{array}
   \right.
\end{displaymath}
Let $\bar{x}\in\R^n$ be the origin and let $\Gamma:=\conv\{(1,0,0),(0,1,0),(0,0,0)\}$.
One can check that the local dimension of $\gph \partial_c f$ at $(\bar{x},\bar{v})$ is two for any vector $\bar{v}\in \big(\conv (\Gamma\cup -\Gamma)\big)\setminus (\Gamma\cup -\Gamma)$. For more details see \cite[Example 3.11]{small}.} 
\end{example}

\small

\bibliographystyle{plain}
\small
\parsep 0pt
\bibliography{dim_graph}

%\begin{thebibliography} {9}
%
%\bibitem{BM}
%J.~A. Boyan and A.~W. Moore, \emph{Learning evaluation functions to improve
%  optimization by local search}, J.~Machine Learning Research
%  \textbf{1} (2000), 77--112, \\
%  \url{http://www.ai.mit.edu/projects/jmlr//papers/volume1/boyan00a/boyan00a.pdf}.
%
%\bibitem{cohen}
%J.~W. Cohen, \emph{The single server queue}, North-Holland, Amsterdam, 1969.
%
%\bibitem{eaves}
%B.~C. Eaves, \emph{The linear complementarity problem}, Management Sci.
%  \textbf{17} (1971), 613--634.
%
%\bibitem{GKT}
%D.~Gale, H.~W. Kuhn, and A.~W. Tucker, \emph{Linear programming and the theory
%  of games}, Activity Analysis of Production and Allocation (T.~C. Koopmans,
%  ed.), Wiley, New York, 1951, pp.~317--329.
%
%\bibitem{KK}
%N.~Karmarkar and R.~M. Karp, \emph{The differencing method of set
%  partitioning}, Tech. Report UCB/CSD 82/113, Computer Science Division,
%  University of California, Berkeley, CA, 1982.
%
%\end{thebibliography}
%

\end{document}